\newtheorem{X}{X}[section]
\newtheorem{lemma}[X]{Lemma}
\newtheorem{theorem}[X]{Theorem}
\newtheorem{conjecture}[X]{Conjecture}
\theoremstyle{definition}
\newtheorem{remark}[X]{Remark}
\newcommand{\V}{\text{Var}}
\newcommand{\E}{\mathbb E}
\renewcommand{\P}{\text{Prob}}
\newcommand{\F}{\mathcal F}
\newcommand{\R}{\mathbb R}
\newcommand{\Z}{\mathbb Z}
\newcommand{\sumchi}{\sum_{\substack{ \chi\bmod q \\ \chi^2=\chi_0 \\ \chi\neq \chi_0}}}
\renewcommand{\a}{\overrightarrow{a}}
\newcommand{\al}{\overrightarrow{\alpha}}
\newcommand{\be}{\overrightarrow{\beta}}
\providecommand{\norm}[1]{\lVert#1\rVert}
\title{Highly biased prime number races}
\author{Daniel Fiorilli}
\address{School of Mathematics, Institute for Advanced Study, 1 Einstein Drive, Princeton NJ 08540 USA}
\curraddr{Department of Mathematics, University of Michigan, 530 Church Street, Ann Arbor MI 48109 USA}
\email{fiorilli@umich.edu}
\begin{document}

\begin{abstract}
Chebyshev observed in a letter to Fuss that there tends to be more primes of the form $4n+3$ than of the form $4n+1$. The general phenomenon, which is referred to as Chebyshev's bias, is that primes tend to be biased in their distribution among the different residue classes $\bmod q$. It is known that this phenomenon has a strong relation with the low-lying zeros of the associated $L$-functions, that is if these $L$-functions have zeros close to the real line, then it will result in a lower bias. According to this principle one might believe that the most biased prime number race we will ever find is the Li$(x)$ versus $\pi(x)$ race, since the Riemann zeta function is the $L$-function of rank one having the highest first zero. This race has density $0.99999973...$, and we study the question of whether this is the highest possible density. We will show that it is not the case, in fact there exists prime number races whose density can be arbitrarily close to $1$. An example of race whose density exceeds the above number is the race between quadratic residues and non-residues modulo $4849845$, for which the density is $0.999999928...$ We also give fairly general criteria to decide whether a prime number race is highly biased or not. Our main result depends on the General Riemann Hypothesis and on a hypothesis on the multiplicity of the zeros of a certain Dedekind zeta function. We also derive more precise results under a linear independence hypothesis.
\end{abstract}
\maketitle
\section{Introduction and statement of results}

The study of prime number races started in 1853, when Chebyshev noted in a letter to Fuss that there seemed to be more primes of the form $4n+3$ than of the form $4n+1$. More precisely, Chebyshev claims without proof that as $c\rightarrow 0$, we have 
$$ -\sum_{p} \left( \frac{-4}p \right)e^{-pc} = e^{-3c}- e^{-5c}+e^{-7c}+e^{-11c}-e^{-13c}-\dots \longrightarrow \infty.$$ 
However, as Hardy and Littlewood \cite{HaLi} and Landau \cite{Lan1,Lan2} have shown, this statement is equivalent to the Riemann hypothesis for $L(s,\chi_{-4})$, where $\chi_{-4}$ denotes the primitive character modulo $4$. 

The modern way to study this question is to look at the set of integers $n$ for which $\pi(n;4,3)>\pi(n;4,1)$, which we denote by $P_{4;3,1}$. One would like to understand the size of this set, however it is known that its natural density does not exist \cite{Ka}. To remedy to this problem we define the \emph{logarithmic density} of a set $P\subset \mathbb N$ by
$$ \delta(P):=\lim_{N\rightarrow \infty} \frac 1{\log N} \sum_{\substack{n\leq N \\ n \in P}} \frac 1n,$$
if the limit exists. In general we define $\underline{\delta}(P)$ and $\overline{\delta}(P)$ to be the $\liminf$ and $\limsup$ of this sequence.
If $P=P_{4;3,1}$, then this last limit exists under the assumption of the Generalized Riemann Hypothesis (GRH) and the Linear Independence Hypothesis (LI), and equals $0.9959...$ (see \cite{RubSar}). 

The \textbf{General Riemann Hypothesis} states that for every primitive character $\chi \bmod q$, all non-trivial zeros of $L(s,\chi)$ lie on the line $\Re (s)=\frac 12$. 

The \textbf{Linear Independence Hypothesis} states that for every fixed modulus $q$, the set 
$$ \bigcup_{\substack{\chi \bmod q\\ \chi \text{ primitive}}} \{ \Im(\rho_{\chi}) : L(\rho_{\chi},\chi)=0, 0<\Re(\rho_{\chi})<1, \Im(\rho_{\chi})\geq 0 \}$$
is linearly independent over $\mathbb Q$.

For a good account of the history of the subject as well as recent developments, the reader is encouraged to consult the great expository paper \cite{MaGr}.  

Rubinstein and Sarnak developed a framework to study this question and more general "prime number races". Assuming GRH and LI, they have shown that for any $r$-tuple $(a_1,\dots a_r)$ of admissible residue classes $\bmod q$ (that is $(a_i,q)=1$), the logarithmic density of the set $P_{q;a_1,\dots,a_r}:= \{n : \pi(n;q,a_1)>\pi(x;q,a_2)>\dots>\pi(x;q,a_r) \}$, which we denote by $\delta(q;a_1,\dots,a_r)$, exists and is not equal to $0$ or $1$ (we call this an $r$-way prime number race). Moreover, they have shown that if $r$ is fixed, then as $q\rightarrow \infty$,
$$ \max_{\substack{1\leq a_1,\dots,a_r \leq q \\ (a_i,q)=1}} \left|\delta(q;a_1,\dots,a_r)-\frac 1{r!}\right|\rightarrow0.$$ 
In other words, the bias dissolves as $q\rightarrow \infty$. For $r=2$, this phenomenon can readily seen in \cite{FiMa}, where the authors exhibit the list of the 117 densities which are greater than or equal to $9/10$. By the trivial inequality
$$ P_{q;a_1,\dots,a_r} \subset P_{q;a_1,a_2},  $$
we see that the most biased $r$-way prime number race is the two-way race appearing on top of the list in \cite{FiMa}, that is
$$ \delta(24;5,1)=0.999988...$$
Only one race is known to be more biased: it is the race between Li$(x)$ and $\pi(x)$, for which the density is $$\delta(1):=\delta(\{n:\text{Li}(n)>\pi(n)\})=0.99999973...$$

One can also combine different residue classes $\bmod q$ to make prime number races. For two subsets $A,B \subset (\Z/q\Z)^{\times}$, we consider the inequality
\begin{equation}
\frac 1{|A|} \sum_{a\in A} \pi(x;q,a) > \frac 1{|B|} \sum_{b\in B} \pi(x;q,b),
\label{equation race between A and B}
\end{equation} 
and denote by $\delta(q;A,B)$ the logarithmic density of the set of $x$ for which it is satisfied, if it exists.
An example of such race was given by Rubinstein and Sarnak who studied
the race between $$\pi(x;q,NR)=\#\{ p\leq x : p \text{ is not a quadratic residue} \bmod q\} $$ and $$\pi(x;q,R)=\#\{ p\leq x : p \text{ is a quadratic residue} \bmod q\},$$
for moduli $q$ having a primitive root. This race appears naturally in their work, since as they have shown, it is the property of the competitors being a quadratic residue or not which determines whether a two-way prime number race is biased or not. These are good candidates for biased races, however it can be shown that as $q\rightarrow \infty$, $\delta(q;NR,R)\rightarrow \frac 12$ (but at a much slower rate than two-way races, see \cite{FiMa}).

In general, one can see (\cite{BFHR}, \cite{FiMa}) that low-lying zeros (excluding real zeros) have a significant effect on decreasing the bias. However, real zeros have the reverse effect, and increase the bias. Nonetheless, real zeros are very rare, 
in fact Chowla's conjecture asserts that Dirichlet $L$-functions never vanish in the interval $s\in (0,1]$. 

Odlyzko \cite{Od} has shown that the Dirichlet $L$-function having the highest first zero in the critical strip is the Riemann zeta function, which is $\rho_0=\frac 12+i\cdot14.134725...$ Subsequently, Miller \cite{Mil} generalized this result by showing that each member of a very large class of cuspidal $GL_n$ $L$-functions has the property of either having a zero in the interval $[\frac 12-14.13472i,\frac 12+14.13472i]$, or having a zero whose real part is strictly larger than $1/2$ (violating GRH). In particular, this class contains all Dirichlet, rational elliptic curve and modular form $L$-functions, and possibly also contains all Artin and rational abelian variety $L$-functions.  
By these considerations, one might conjecture that the highest density one will ever find by doing prime number races is $\delta(1)=0.99999973...$ 

As it turns out, this is false, and we can find races which are arbitrarily biased. This is achieved by considering races between linear combinations of prime counting functions, and we will see in Section \ref{section general analysis} that the key to finding such biased races is to take a very large number of residue classes.

 The first (and most extreme) example we give is a quadratic residue versus quadratic non-residue race as in \cite{RubSar}, but for a general modulus $q$. 
We take $A=NR:=\{ a\bmod q : a \equiv  \square \bmod q\}$ and  $B=R:=\{ b\bmod q : b \not \equiv  \square \bmod q\}$ in \eqref{equation race between A and B}.
Note that $|B|=\phi(q)/\rho(q)$ and $|A|=\phi(q)\left(1-\frac 1{\rho(q)} \right)$, where
 $$ \rho(q):= [G:G^2]=\begin{cases}
2^{\omega(q)} &\text{if } 2\nmid q, \\
2^{\omega(q)-1} &\text{if } 2\mid q \text{ but } 4\nmid q, \\
2^{\omega(q)} &\text{if } 4\mid q \text{ but } 8\nmid q, \\
2^{\omega(q)+1} &\text{if } 8\mid q, \\
\end{cases}$$ 
and $\omega(q)$ denotes the number of distinct prime factors of $q$.

\begin{theorem}
Assume GRH and LI. Then for any $\epsilon>0$ there exists $q$ such that
\begin{equation} 1-\epsilon < \delta(q;NR,R)< 1. \label{delta tend vers 1} 
\end{equation}

Moreover, for any fixed $\frac 12 \leq \eta\leq 1$ there exists a sequence of moduli $\{q_n\}$ such that 
\begin{equation}  \lim_{n\rightarrow \infty} \delta(q_n;NR,R)= \eta. \label{delta tend vers eta}\end{equation}
 In concise form,
$$ \overline{ \{ \delta(q;NR,R) \} } = \Big[\frac 12,1\Big]. $$
\label{theorem extreme Dirichlet races}
\end{theorem}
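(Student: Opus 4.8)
The plan is to set up the explicit formula for the normalized error term in the $NR$ versus $R$ race, identify the limiting distribution, and then extract the density as a function of a few analytic quantities attached to $q$ that we can control by choosing $q$ cleverly. Concretely, write
$$ E(x;q,NR,R) := \frac{\log x}{\sqrt x}\Bigl( \frac{\phi(q)}{|NR|}\pi(x;q,NR) - \frac{\phi(q)}{|R|}\pi(x;q,R)\Bigr). $$
Under GRH and LI a by-now standard computation (as in Rubinstein--Sarnak, and recalled in the references \cite{RubSar},\cite{FiMa}) shows that $E(x;q,NR,R)$, viewed with $x=e^u$ and $u$ distributed logarithmically, converges in distribution to a random variable of the form
$$ X_q = -c(q) + \sum_{\gamma>0} \frac{2}{\sqrt{\tfrac14+\gamma^2}}\,\bigl(\text{contribution of }\gamma\bigr), $$
where the sum runs over the ordinates of zeros of the relevant $L$-functions, weighted by how the corresponding characters split between $NR$ and $R$; only the \emph{real} (quadratic) characters $\chi$ with $\chi^2=\chi_0$ contribute to the bias, and $c(q)$ is a positive drift term coming from those same characters (the $\sum_{p}$-type main term, essentially $-\sum_{\chi^2=\chi_0,\chi\neq\chi_0}1 = -(\rho(q)-1)$ after normalization). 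The density $\delta(q;NR,R)$ is then $\P(X_q>0)$.

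The main point is that the drift $c(q)$ grows like $\rho(q)-1 = [G:G^2]-1$, which can be made as large as we like (e.g.\ $q$ a product of the first $k$ primes gives $\rho(q)=2^k$), while the variance of the fluctuating part, $\V(X_q) = \sum_{\chi^2=\chi_0,\chi\neq\chi_0} b(\chi) \sum_{\gamma_\chi>0} \frac{4}{\tfrac14+\gamma_\chi^2}$ with $b(\chi)$ the appropriate combinatorial weight, grows much more slowly — each quadratic $L(s,\chi)$ contributes a bounded amount $\sum_\gamma (\tfrac14+\gamma^2)^{-1} \ll \log(q(\abs{\gamma}+2))$, and after the normalization by $|NR|$ and $|R|$ the per-character weights are $O(1/\rho(q))$ or so, so $\V(X_q) \ll \log q$ while the mean is $\asymp \rho(q)$. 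A Chebyshev (or better, exponential-moment) bound then gives
$$ 1 - \delta(q;NR,R) = \P(X_q \le 0) \le \frac{\V(X_q)}{c(q)^2} \ll \frac{\log q}{\rho(q)^2} \longrightarrow 0 $$
along any sequence with $\rho(q)\to\infty$, which proves \eqref{delta tend vers 1}; the strict inequality $\delta<1$ is automatic since $X_q$ has unbounded support in both directions under LI. This is the step I expect to carry the real weight: one must honestly bound the second moment (and ideally produce a matching lower bound for $1-\delta$, via a central-limit or small-ball estimate for $X_q$, to see the density is genuinely $<1$ and to feel out the rate), keeping careful track of how the combinatorial weights $b(\chi)$ depend on the factorization of $q$ and on $\rho(q)$.

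For the second assertion \eqref{delta tend vers eta}, the idea is an interpolation/continuity argument: $\eta = 1$ is covered by the first part, and $\eta = \tfrac12$ is covered by the known fact $\delta(q;NR,R)\to\tfrac12$ as $q\to\infty$ through moduli with bounded $\rho(q)$ (or, say, $q$ prime), cited from \cite{FiMa}; for intermediate $\eta$ one engineers a sequence $q_n$ interpolating between these two regimes. The cleanest route is to show $X_q$, suitably centered and scaled, satisfies a central limit theorem when $\rho(q)$ and the number of contributing zeros both tend to infinity in a controlled way, so that $\delta(q;NR,R) \approx \Phi\bigl(c(q)/\sqrt{\V(X_q)}\bigr)$; then choosing $q_n$ so that the ratio $c(q_n)/\sqrt{\V(X_q)}$ converges to $\Phi^{-1}(\eta)$ yields the claim. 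The closure statement $\overline{\{\delta(q;NR,R)\}}=[\tfrac12,1]$ follows since the values are trapped in $(\tfrac12,1)$ for $\rho(q)>1$ (real zeros only push the bias up, above $\tfrac12$) and we have exhibited limit points filling the whole interval. The main obstacle throughout is the variance/CLT analysis: establishing enough uniformity in the convergence $E(x;q,NR,R)\Rightarrow X_q$ and in the behavior of $\V(X_q)$ as $q$ varies to make the interpolation rigorous, rather than just the single limit $\rho(q)\to\infty$.
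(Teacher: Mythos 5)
Your overall strategy matches the paper's: represent $\delta(q;NR,R)$ as $\P[X_q>0]$ for a limiting random variable $X_q$ with drift $\E[X_q]=\rho(q)-1$, bound $1-\delta$ via Chebyshev's inequality, and for the interpolation part use a central limit theorem and a Berry--Esseen estimate together with a sequence of moduli whose mean-to-standard-deviation ratio converges to the Gaussian quantile of $\eta$. That is exactly how the paper proceeds in Sections 2 and 3, with Lemma \ref{lemma bias factor chebyshev} doing the Chebyshev step, Lemma \ref{lemma clt} and Lemma \ref{lemma berry esseen} doing the CLT, and Lemma \ref{lemma prescribed prime factors} supplying the tuned sequence of moduli.

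However, your variance estimate is wrong, and not innocently so. You claim $\V[X_q]\ll\log q$ on the grounds that ``after the normalization by $|NR|$ and $|R|$ the per-character weights are $O(1/\rho(q))$.'' That is not the case. In the normalization where $\E[X_q]=\rho(q)-1$ (the paper's; yours differs only by the inessential factor $\rho(q)/(\rho(q)-1)$), each real nonprincipal $\chi$ enters the explicit formula with coefficient exactly $1$, so each of the $\rho(q)-1$ quadratic characters contributes a full $\sum_{\gamma_\chi}(1/4+\gamma_\chi^2)^{-1}\asymp\log q^*$ to the variance. The correct statement (Lemma \ref{lemma variance of X_q}) is $\V[X_q]\asymp\rho(q)\log q'$. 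Note, too, that your estimates are mutually inconsistent: if the per-character weight were $O(1/\rho(q))$ the mean would be $O(1)$, not $\asymp\rho(q)$. The correct ratio is $B(q)^2:=\E[X_q]^2/\V[X_q]\asymp\rho(q)/\log q'$, not $\rho(q)^2/\log q$. Your Chebyshev conclusion for part one survives the correction, since $\log q'=o(\rho(q))$ along primorials still gives $1-\delta=o(1)$; but your claimed rate $1-\delta\ll\log q/\rho(q)^2$ is too fast by a factor of $\rho(q)$ (compare Theorem \ref{theorem more precise large deviation}). More importantly, the second part of the theorem hinges on having the \emph{right} asymptotic for $B(q)$: one must construct $q_n$ with $2^{\omega(q_n)+1}/\log q_n'\to\kappa^2$, i.e.\ tune $\rho(q)/\log q'$, as in Lemma \ref{lemma prescribed prime factors}. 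Tuning $\rho(q)^2/\log q$ instead, as your formula would suggest, would send $B(q_n)$ to $0$ or $\infty$ rather than to $\kappa$, and you would miss every $\eta\in(1/2,1)$. Once the variance is corrected your sketch lines up with the paper's argument.
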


To prove the existence of highly biased races we do not need the full strength of LI, in fact we only need a hypothesis on the multiplicity of the elements of the multiset of all non-trivial zeros of quadratic Dirichlet $L$-functions modulo $q$, which we will denote by $Z(q)$. Note that LI implies that the elements of this set have multiplicity one.

\begin{theorem}
Assume GRH, and assume that there exists an increasing sequence of moduli $q$ such that $\log q = o(\rho(q))$ and such that each element of $Z(q)$ has multiplicity $o(\rho(q)/\log q)$. Then for any $\epsilon>0$ there exists $q$ such that 
\begin{equation} 1-\epsilon < \underline{\delta}(q;NR,R) \leq \overline{\delta}(q;NR,R)< 1. \label{delta tend vers 1 sans LI} 
\end{equation}
\label{theorem extreme Dirichlet without LI}
\end{theorem}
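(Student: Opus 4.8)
The plan is to follow the Rubinstein–Sarnak framework to express the normalized error term for the $NR$ versus $R$ race as a limiting distribution, and then control its mean and variance under the weaker hypothesis on the multiplicity of zeros in $Z(q)$. First I would recall that, writing $E(x)=E_{q;NR,R}(x)$ for the suitably normalized difference $\frac{1}{|A|}\sum_{a\in A}\psi(x;q,a)-\frac{1}{|B|}\sum_{b\in B}\psi(x;q,b)$ scaled by $\sqrt{x}/\log x$, the function $E(e^y)$ has, under GRH, a limiting logarithmic distribution $\mu_q$ on $\mathbb{R}$; this is standard and follows from the explicit formula together with a Kronecker–Weyl equidistribution argument (here one only needs GRH plus the Kronecker–Weyl theorem applied to the multiset $Z(q)$ of imaginary parts, where repeated frequencies are handled by grouping equal ordinates — this is exactly where the multiplicity hypothesis, rather than full LI, is what is used). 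The density $\delta(q;NR,R)$ is then $\mu_q(\mathbb{R}_{>0})$ up to a negligible error coming from prime powers, and the goal becomes to show $\mu_q(\mathbb{R}_{>0})\to 1$ along the given sequence of moduli.

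Next I would compute the mean $m_q$ and variance $V_q$ of $\mu_q$ in terms of the zeros. The mean is governed by the "$-1$ bias" term: the quantity $\frac{1}{|A|}\sum_{a\in A}c(q,a)-\frac{1}{|B|}\sum_{b\in B}c(q,b)$, where $c(q,a)$ counts (with sign) the number of solutions of $n^2\equiv a\pmod q$, essentially equals $\rho(q)$ up to bounded factors, so $m_q\asymp \rho(q)$ — this is the source of the bias and the reason a large number of residue classes (large $\rho(q)$) is essential. The variance is a sum over the zeros in $Z(q)$ of terms of size roughly $1/|\rho|^2$ weighted by squared Gauss-sum/character factors; using GRH to bound the number of zeros up to height $T$ by $O(\rho(q)\log(qT))$ and the multiplicity bound to control the diagonal contribution, one gets $V_q\ll \rho(q)\log q$ (possibly with a small extra factor from multiplicities, absorbed by the hypothesis $\log q=o(\rho(q))$ and multiplicity $o(\rho(q)/\log q)$).

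With $m_q\gg \rho(q)$ and $V_q\ll \rho(q)\log q \cdot o(\rho(q)/\log q)=o(\rho(q)^2)$, I would finish by Chebyshev's inequality:
$$ \mu_q(\mathbb{R}_{\leq 0}) \leq \mu_q\big(\{x : |x-m_q|\geq m_q\}\big) \leq \frac{V_q}{m_q^2} \ll \frac{o(\rho(q)^2)}{\rho(q)^2} \longrightarrow 0, $$
so $\delta(q;NR,R)\to 1$ along the sequence; choosing $q$ far enough out in the sequence gives $1-\epsilon<\underline{\delta}(q;NR,R)$. The strict upper bound $\overline{\delta}(q;NR,R)<1$ for each fixed $q$ follows from the fact that $\mu_q$ is a genuine probability measure with unbounded support on both sides of any point (again a consequence of Kronecker–Weyl: the error term oscillates and takes arbitrarily negative values on a set of positive logarithmic density, since $\mu_q$ is not a point mass), so the race is never won with density exactly $1$.

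**Main obstacle.** The delicate point is obtaining the variance bound $V_q=o(\rho(q)^2)$ without LI: one must show that the squared weights $|\frac{1}{|A|}\sum_{a\in A}\overline{\chi}(a)-\frac{1}{|B|}\sum_{b\in B}\overline{\chi}(b)|^2$, summed against $1/|\rho|^2$ over all zeros $\rho$ of all quadratic $L(s,\chi)\bmod q$, do not blow up — and crucially that coincidences among the ordinates (controlled only by the multiplicity hypothesis) contribute a term that is genuinely smaller than $m_q^2$. This requires carefully bounding the contribution of each quadratic character $\chi$ (only the real, non-principal $\chi$ contribute, and there are $\rho(q)-1$ of them, each contributing $\asymp\log q$ to the zero sum), and showing the off-diagonal/coincidence terms arising from equal imaginary parts are dominated by the multiplicity factor times the diagonal — this is precisely the step where the hypothesis "each element of $Z(q)$ has multiplicity $o(\rho(q)/\log q)$" is consumed.
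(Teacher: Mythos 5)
Your proposal follows essentially the same route as the paper: compute $\E[X_q]\asymp\rho(q)$ and $\V[X_q]=o(\rho(q)^2)$ from the multiplicity hypothesis, then deduce the lower bound on $\underline{\delta}$ via Chebyshev applied to the limiting logarithmic distribution $\mu_q$, with the Kronecker--Weyl / Besicovitch machinery standing in for LI, and get $\overline{\delta}<1$ from a tail bound on the negative side of $\mu_q$. Two places where the paper is sharper than your sketch and which are worth being careful about: the variance is exactly $\V[X_q]=\sum_{\gamma\neq 0}^{*}m_\gamma^2/(\tfrac14+\gamma^2)$ (the star denoting zeros of the product $Z_q(s)=\prod_{\chi^2=\chi_0,\chi\neq\chi_0}L(s,\chi)$ counted without multiplicity but weighted by $m_\gamma^2$, which is precisely how coincident ordinates across characters blow up the second moment), and the hypothesis is consumed via the single inequality $\V[X_q]\leq(\max_\gamma m_\gamma)\sum_\gamma^{*}m_\gamma/(\tfrac14+\gamma^2)=o(\rho(q)/\log q)\cdot O(\rho(q)\log q)=o(\rho(q)^2)$, so your interim claim ``$V_q\ll\rho(q)\log q$'' should be replaced by this; and one cannot pass directly from $\mu_q$-probabilities to $\underline{\delta}$ or $\overline{\delta}$ since the indicator of $(0,\infty)$ is not admissible as a test function under GRH alone -- the paper's Lemma on the Chebyshev bound uses a Lipschitz minorant $f\leq H$ to get $\underline{\delta}\geq 1-\mu_q(-\infty,1]$, and the matching Lipschitz majorant together with the Rubinstein--Sarnak--type lower bound on $\mu_q(-\infty,-1]$ (as extended by Akbary--Ng--Shahabi) gives $\overline{\delta}<1$, rather than merely the observation that $\mu_q$ is not a point mass.
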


\begin{remark}
The difference between \eqref{delta tend vers 1} and \eqref{delta tend vers 1 sans LI} is explained by the fact that it is not known whether $\delta(q;NR,R)$ exists under GRH alone.
\end{remark}

\begin{remark}
\label{remark consider squarefree moduli}
For a fixed modulus $q\geq 2$, write $q=2^e \prod_{\substack{p\mid q\\p\neq 2}} p^{e_p}$ and $\ell:=\prod_{\substack{p\mid q \\ p\neq 2}}p$. One can see that
$$ \delta(q;NR,R) = \delta(2^{\min(3,e)} \ell; NR,R), $$
since there are no real primitive characters modulo $p^e$ with $p\neq 2$ and $e\geq 2$, and there are no real primitive characters modulo $2^e$ for $e\geq 4$ (see Lemma \ref{lemma link with random variable X_q}). Therefore, when studying $\delta(q;NR,R)$ one can assume without loss of generality that $q$ is of the form $2^m \ell$, where $\ell$ is an odd squarefree integer and $m\leq 3$.
\end{remark}

\begin{remark}
\label{remark condition for biased races}
We will see that what controls the bias in these races is the number of prime factors of $q$ and the size of $q$. More precisely, under GRH and LI the two following statements are equivalent:
\begin{equation}
\sum_{p\mid q}\log p = o(2^{\omega(q)}),
\end{equation} 
 \begin{equation}
\delta(q;NR,R)=1-o(1).
\end{equation}
 Using this, we can show that the set of moduli $q\leq x$ such that $\delta(q;NR,R)=1-o(1)$ has density $(\log x)^{-\lambda+o(1)}$, where $\lambda=1-\frac{1+\log\log 2}{\log 2}=0.086071...$
Interestingly, Ford's work on integers having a divisor in a given interval (see \cite{Fo1}, \cite{Fo2}) shows that these integers appear in the Erd\H{o}s multiplication table.
\end{remark}

In terms of random variables, this can be explained by saying that the extreme examples we are considering correspond to random variables whose mean is much larger than their standard deviation. 
The easy way to show that this implies a very large bias is to use Chebyshev's inequality; however this approach is quite imprecise when the ratio 
$ \mathbb E[X]/\sqrt{\text{Var}[X]}$ is large. Instead, one should study the large deviations of $X-\mathbb E[X]$. The theory of large deviations of error terms arising from prime counting functions was initiated by Montgomery \cite{Mo}, and has since then been developed by Monach \cite{Mn}, Montgomery and Odlyzko \cite{MoOd}, Rubinstein and Sarnak \cite{RubSar}, and more recently Lamzouri \cite{Lam}. Exploiting such ideas we are able to be more precise in \eqref{delta tend vers 1}. 

\begin{theorem}
Assume GRH and LI, and define $q':=\prod_{p\mid q} p$. If $\rho(q)/\log q'$ is large enough, then we have
$$ \exp\left(-a_1 \frac{\rho(q)}{\log q'}\right) \leq 1-\delta(q;NR,R) \leq \exp\left(-a_2 \frac{\rho(q)}{\log q'}\right), $$
where $a_1$ and $a_2$ are absolute constants.
\label{theorem more precise large deviation}
\end{theorem}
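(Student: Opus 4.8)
The plan is to reduce the quantity $1-\delta(q;NR,R)$ to a tail probability for a limiting random variable, and then to estimate that tail by the Montgomery–Odlyzko / Rubinstein–Sarnak large deviation machinery. First I would invoke the Rubinstein–Sarnak framework (under GRH and LI) to write
$$ \delta(q;NR,R) = \mathrm{Prob}\big( X_q > 0 \big), $$
where $X_q$ is a limiting random variable of the form $X_q = \mu_q + Y_q$, with $\mu_q$ a positive constant coming from the squares in the prime counting functions, and $Y_q = \sum_\gamma (\text{coefficients})\cdot Z_\gamma$ a sum of independent mean-zero random variables indexed by the ordinates $\gamma$ of zeros of the relevant quadratic Dirichlet $L$-functions modulo $q$ — each $Z_\gamma$ being (essentially) a $\mathrm{Re}$ of a uniform point on the unit circle, hence bounded. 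The key arithmetic input, which I expect to have been established in the "general analysis" section, is the order of magnitude of the mean and the variance: $\mu_q \asymp \rho(q)/\log q'$ up to factors that are $\log$-power size at worst, and $\mathrm{Var}[Y_q] \asymp \rho(q)$. Thus $1-\delta(q;NR,R) = \mathrm{Prob}(Y_q \le -\mu_q)$, a genuinely large deviation of order $\mu_q/\sqrt{\mathrm{Var}[Y_q]} \asymp \sqrt{\rho(q)}/\log q'$ standard deviations, which is what makes Chebyshev's bound wasteful and forces a sharper analysis.

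For the upper bound on $1-\delta(q;NR,R)$ I would use an exponential moment (Chernoff / Laplace transform) estimate: for $\lambda>0$,
$$ \mathrm{Prob}(Y_q \le -\mu_q) \le e^{-\lambda \mu_q}\, \mathbb E\big[ e^{-\lambda Y_q} \big] = e^{-\lambda \mu_q} \prod_\gamma \mathbb E\big[ e^{-\lambda c_\gamma Z_\gamma} \big]. $$
Because the $Z_\gamma$ are bounded and the coefficients $c_\gamma$ decay like $1/|\gamma|$, one has $\mathbb E[e^{-\lambda c_\gamma Z_\gamma}] \le \exp(C \lambda^2 c_\gamma^2)$ uniformly for $\lambda$ up to some absolute constant times $1/\max_\gamma|c_\gamma|$; summing $c_\gamma^2$ over $\gamma$ reproduces $\mathrm{Var}[Y_q] \asymp \rho(q)$. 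Choosing $\lambda$ a small absolute constant then yields $1-\delta(q;NR,R) \le \exp(-c\lambda\mu_q) = \exp(-a_2 \rho(q)/\log q')$, provided $\rho(q)/\log q'$ is large enough (so that $\mu_q$ dominates and the cruder $\log$-power discrepancies between $\mu_q$ and $\rho(q)/\log q'$ are absorbed into $a_2$). For the lower bound I would produce an explicit "conspiracy" event: restrict to the handful of lowest zeros $\gamma_1,\dots,\gamma_k$ with $k$ an absolute constant, force each corresponding $Z_{\gamma_j}$ to be near its extreme negative value so that their combined contribution beats $-\mu_q$ — this is possible because the low zeros carry a positive proportion of the "pull" $\mu_q$ relative to a single zero, i.e. $\sum_{j\le k}|c_{\gamma_j}| \gg \mu_q/(\log q' \cdot \text{stuff})$… more carefully, I would note $\mu_q \asymp \rho(q)/\log q'$ and the largest coefficient $c_{\gamma_1} \asymp 1/\log q'$, so I need roughly $\rho(q)$ of the low zeros, with total negative contribution $\asymp \rho(q)/\log q' \asymp \mu_q$, each confined to an arc of fixed positive angular width; by independence the probability of this event is $\prod \mathrm{Prob}(Z_{\gamma_j} \in \text{arc}) \ge c^{\,\rho(q)} $, and a more economical choice of arcs (wider arcs for the many small-$|\gamma|$ zeros whose coefficients are comparable) optimizes this to $\exp(-a_1 \rho(q)/\log q')$. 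One must also check that on this event the remaining (high) zeros, which are unconstrained, do not with overwhelming probability cancel the gain — handled by a second-moment bound on the tail sum $\sum_{\gamma > \gamma_k} c_\gamma Z_\gamma$, whose variance is $o(\mu_q^2)$ when $\rho(q)/\log q'$ is large, so a positive proportion of the conspiracy event survives.

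The main obstacle is the lower bound, specifically getting the constant in the exponent to have the correct shape $\rho(q)/\log q'$ rather than something larger like $\rho(q)$ or $\rho(q)\log\log q$. The naïve conspiracy — pinning each of $\asymp\rho(q)$ low zeros into a fixed-width arc — gives only $\exp(-c\rho(q))$, which is too small by a factor of $\log q'$ in the exponent. The fix is to exploit that we do not need each low zero pinned tightly: we need the weighted sum $\sum c_\gamma Z_\gamma$ to be roughly $-\mu_q$, and since there are about $\rho(q)$ zeros with coefficients of comparable size $\asymp 1/\log q'$ summing (in absolute value of coefficients) to $\asymp \rho(q)/\log q' = \mu_q$, a typical configuration already has this sum of size $\asymp \sqrt{\rho(q)}/\log q'$ standard deviation, i.e. we are asking for a $\sqrt{\rho(q)}$-standard-deviation event among $\asymp\rho(q)$ nearly-i.i.d. bounded variables — and the sharp large-deviation rate for that is $\exp(-c\cdot(\sqrt{\rho(q)})^2/\rho(q)\cdot \rho(q)) = \exp(-c\rho(q)\cdot(\mu_q/\sqrt{\mathrm{Var}})^2 /\rho(q))$, which after simplification is $\exp(-c\mu_q^2/\mathrm{Var}[Y_q]) = \exp(-c\rho(q)/(\log q')^2)$ — still not quite right, signalling that one must instead allow the coefficients' natural $1/|\gamma|$ decay to be used, spreading the required shift over logarithmically many scales of zeros; carrying out this optimization carefully (a variational problem matching the upper-bound Chernoff computation term by term) is where the real work lies, and it is what pins the exponent at $\asymp \rho(q)/\log q'$ in both directions.
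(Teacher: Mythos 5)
Your strategy --- reduce $1-\delta(q;NR,R)$ to a large-deviation tail and then supply matching exponential upper and lower bounds via Chernoff and conspiracy arguments --- is the right one and, up to packaging, is exactly what the paper does through Montgomery--Odlyzko's Theorem~\ref{theorem mood}. The genuine gap is an arithmetic error in your claimed mean and variance, which throws the entire exponent computation off by a factor of $\log q'$ and manufactures a spurious difficulty. You assert $\mu_q\asymp\rho(q)/\log q'$ and $\V[Y_q]\asymp\rho(q)$, whereas Lemma~\ref{lemma first two moments without LI} and Lemma~\ref{lemma variance of X_q} show $\E[X_q]=\rho(q)-1\asymp\rho(q)$ and $\V[X_q]\asymp\rho(q)\log q'$; each of your values is wrong by a factor of $\log q'$, and in opposite directions. (Your claim that the largest coefficient is $\asymp 1/\log q'$ is also off: the coefficients are $2/\sqrt{1/4+\gamma_\chi^2}$, so the largest is $\asymp 1$.) As a consequence your heuristic exponent $\mu_q^2/\V[Y_q]$ comes out to $\rho(q)/(\log q')^2$ rather than the correct $\rho(q)/\log q'$ --- a discrepancy you notice yourself near the end --- and you then invoke a ``variational optimization over logarithmically many scales of zeros'' to try to close it.

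That optimization is a phantom. With the correct $\E[X_q]\asymp\rho(q)$ and $\V[X_q]\asymp\rho(q)\log q'$, the Gaussian heuristic already produces $\exp(-c\,\rho(q)/\log q')$, and both inequalities of the theorem follow directly from Theorem~\ref{theorem mood} once one verifies (Lemma~\ref{lemma asymptotic for sum of zeros}, Lemma~\ref{lemma Montgomery bounds}) that the relevant truncated coefficient-square sums are each $\asymp\rho(q)\log q'$. The paper's proof is exactly: $\delta(q;NR,R)=1-\P[Y_q\geq\E[X_q]]$ by symmetry of $Y_q=X_q-\E[X_q]$, then apply Lemma~\ref{lemma Montgomery bounds} with $\F(q)=\{\chi\bmod q:\chi^2=\chi_0,\ \chi\neq\chi_0\}$, so that $|\F(q)|=\rho(q)-1$ and $L(q)\asymp\log q'$. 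In particular the lower bound needs no hand-built conspiracy event with tuned arc widths: Montgomery--Odlyzko's theorem, already stated in the paper, supplies the matching lower bound in the same stroke as the upper.
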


This last theorem shows that the convergence in \eqref{delta tend vers 1} can be quite fast. It is actually possibly to explicitly compute a density which exceeds $\delta(1)$, namely $\delta(4849845;NR,R)=0.999999928...$ Below we list the first few values of $\delta(q;NR,R)$ for half-primorial moduli (that is, $q$ is the product of the first $k$ primes excluding $p=2$). These values were computed using Mysercough's method \cite{My} and Rubinstein's {\tt lcalc} package.

\begin{center}
\begin{tabular}{|c|c|c|c|}
\hline
$q$ & $\omega(q)$&$\rho(q)/\log q' $ & $\delta(q;NR,R)$\\
\hline
$3$ & $1$ &$1.82$ &$0.999063$ \\
$15$ & $2$ & $1.47$&$0.999907$ \\
$105$ & $3$& $1.71$ &$0.999928 $\\
$1155$ & $4$& $2.26$ & $0.999877$ \\
$15015$ & $5$& $3.33$ & $0.999950$ \\
$255255$ & $6$& $5.14$ & $0.9999946$\\
$4849845$ & $7$& $8.31$  & $0.999999928$ \\
$ 111546435$ & $8$ & $13.81$ & $0.999999999954$\\
\hline
\end{tabular}
\end{center}

\begin{remark}
As remarked by Rubinstein and Sarnak \cite{RubSar}, these densities can theoretically be computed to any given level of accuracy under GRH alone. Indeed, using the $B^2$ almost-periodicity of these races, this amounts to computing a finite number of zeros of Dirichlet $L$-functions to a certain level of accuracy. 
\end{remark}

\begin{remark}
One can summarize Remark \ref{remark condition for biased races}, Theorem \ref{theorem extreme Dirichlet races} and Theorem \ref{theorem more precise large deviation} by the following statement:
$$ \delta(q;NR,R) \approx \frac 1{\sqrt{2\pi}} \int_{-\sqrt{2^{\omega(q)-1}/\log q'}}^{\infty} e^{-\frac{x^2}2} dx. $$
\end{remark}

\begin{remark}
\label{remark pomerance}
Using our analysis, one can show that for almost all squarefree integers $q$,
$$ \delta(q;NR,R) -\frac 12 = (\log q)^{\frac{\log 2-1}2 +o(1)}. $$
That is to say, races with normal moduli have a very moderate bias.
\end{remark}

It is possible to analyse highly biased races in a more general setting, and to determine which features are needed for this bias to appear. To do this we take
$\a=(a_1,...,a_k)$ a vector of invertible reduced residues modulo $q$ and $\al=(\alpha_1,...,\alpha_k)$ a non-zero vector of real numbers such that $\sum_{i=1}^k\alpha_i=0$. We will be interested in the race between positive and negative entries of $\al$, that is we define
$$ \delta(q;\a,\al):= \delta(\{ n :\alpha_1\pi(n;q,a_1)+...+\alpha_k\pi(n;q,a_k) >0\}).$$
 Moreover, we define 
$$ \epsilon_i := \begin{cases}
1 & \text{ if }  a_i \equiv \square \bmod q\\
0 & \text{ if } a_i \not\equiv \square \bmod q,
\end{cases}
$$
and we assume without loss of generality that 
$$ \sum_{i=1}^k \epsilon_i \alpha_i < 0. $$
(By Lemma \ref{lemma link with random variables general}, this will force $\delta(q;\a,\al)> \frac 12$. If $ \sum_{i=1}^k \epsilon_i \alpha_i = 0,$ then $\delta(q;\a,\al)= \frac 12$. If $ \sum_{i=1}^k \epsilon_i \alpha_i> 0,$ then we multiply $\al$ by minus one and study the complementary probability $\delta(q;\a,-\al)=1-\delta(q;\a,\al)$.)

There are many choices of vectors $\a$ and $\al$ which yield highly biased races. We give some examples with constant coefficients, which we believe are the most natural.
\begin{theorem}
\label{theorem constant coefficients}
Assume GRH and LI, and let $k_R \leq \frac{\rho(q)}{\phi(q)}$ and $k_N \leq \left(1-\frac{1}{\rho(q)} \right)\phi(q)$ be two positive integers. Take $a_1,...,a_{k_N}$ to be any distinct quadratic non-residues $\bmod q$, with coefficients $\alpha_1=...=\alpha_{k_N}=k_R$, and $a_{k_N+1},...,a_{k_N+k_R}$ to be any distinct quadratic residues $\bmod q$, with $\alpha_{k_N+1}=...=\alpha_{k_N+k_R}=-k_N$. There exists an absolute constant $c>0$ such that if for some $0<\epsilon<\frac 1{2c}$ we have
\begin{equation}
\frac 1{k_N}+\frac 1{k_R} <\epsilon \frac{\rho(q)^2}{\phi(q)\log q},
\label{equation highly biased races with constant coefficients}
\end{equation} 
then 
$$ \delta(q;\a,\al) >1-c\epsilon.$$
\end{theorem}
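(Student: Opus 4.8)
The plan is to reduce the race to a statement about a random variable and then apply the large-deviation machinery already in place for Theorem \ref{theorem more precise large deviation}. First I would invoke Lemma \ref{lemma link with random variables general} (the general analogue of Lemma \ref{lemma link with random variable X_q}) to express $\delta(q;\a,\al)$ as $\mathbb{P}[X > -\mu]$, where $X$ is the limiting random variable attached to the weighted error term $\sum_i \alpha_i \psi(x;q,a_i)$ (suitably normalized), $\mu = -\sum_{i} \epsilon_i \alpha_i$ times an explicit main-term factor coming from the $-\frac12$ weighting of squares, and $X$ has mean zero. Under GRH and LI, $X$ decomposes as a sum of independent pieces indexed by the zeros of the Dirichlet $L$-functions modulo $q$, with variance $\V[X]$ computable as a sum over characters of $\bigl|\sum_i \alpha_i \chi(a_i)\bigr|^2$ weighted by $\sum_{\rho_\chi} 1/|\rho_\chi|^2 \ll \log q$. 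With the stated constant coefficients $\alpha_i = k_R$ on $k_N$ non-residues and $\alpha_i = -k_N$ on $k_R$ residues, the total weight vector has $\ell^2$-norm $\|\al\|_2^2 = k_N k_R^2 + k_R k_N^2 = k_N k_R(k_N + k_R)$, while the mean satisfies $\mu \asymp k_N k_R$ (the $k_N$ non-residues each contribute, against the squares-bias, with the full weight). Hence the normalized ratio is
$$
\frac{\mu}{\sqrt{\V[X]}} \gg \frac{k_N k_R}{\sqrt{k_N k_R (k_N+k_R)\log q}} \cdot \frac{\rho(q)}{\sqrt{\phi(q)}} = \frac{\rho(q)}{\sqrt{\phi(q)\log q}}\sqrt{\frac{k_N k_R}{k_N+k_R}},
$$
and condition \eqref{equation highly biased races with constant coefficients}, which says $\frac{1}{k_N}+\frac{1}{k_R} = \frac{k_N+k_R}{k_N k_R} < \epsilon\,\frac{\rho(q)^2}{\phi(q)\log q}$, is exactly the statement that $\mu/\sqrt{\V[X]} > 1/\sqrt{\epsilon}$.

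Next I would bound $1 - \delta(q;\a,\al) = \mathbb{P}[X \le -\mu]$. The crude route is Chebyshev's inequality, giving $\mathbb{P}[X \le -\mu] \le \V[X]/\mu^2 < \epsilon/(\text{const})$, which already yields a bound of the shape $c\epsilon$ for an absolute $c$ once $\epsilon < \frac{1}{2c}$. This is in fact enough for the theorem as stated, since only an upper bound of the form $c\epsilon$ is claimed; one does not need the sharper exponential savings of Theorem \ref{theorem more precise large deviation} here. To make this rigorous I must check that the reduction to $X$ is valid: that the limiting distribution exists under GRH and LI (this is the Rubinstein--Sarnak framework, cited), that the normalization constants relating $\psi$ to $\pi$ and the prime-power corrections do not disturb the inequality direction (this is where the hypothesis $\sum_i \epsilon_i \alpha_i < 0$ enters, ensuring $\mu > 0$), and that the upper and lower bounds $k_R \le \rho(q)/\phi(q)$ and $k_N \le (1-1/\rho(q))\phi(q)$ are precisely the constraints that there actually exist that many distinct residues, resp. non-residues, modulo $q$ — so the construction is non-vacuous.

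The main obstacle I anticipate is not the large-deviation estimate itself but getting the constants in the variance and mean computations to be genuinely absolute and uniform in $q$, $k_N$, $k_R$. Specifically: (i) the lower bound $\mu \gg k_N k_R$ requires that the main-term contribution of the $k_N$ non-residue classes not be partially cancelled — one must track that each non-residue $a_i$ contributes a term with a definite sign in the bias, which follows from $\epsilon_i = 0$ for those and the sign normalization, but the implied constant must be shown independent of which non-residues are chosen; and (ii) the variance upper bound $\V[X] \ll \|\al\|_2^2 \log q \cdot \rho(q)^2/\phi(q)$ (equivalently $\V[X] \ll k_N k_R(k_N+k_R)\log q\cdot\rho(q)^2/\phi(q)$) requires expanding $\sum_{\chi} |\sum_i \alpha_i\chi(a_i)|^2$ by orthogonality and controlling the off-diagonal terms; since the coefficients are piecewise constant on residues and non-residues, the character sum collapses nicely, but one should verify the cross terms between the residue-block and non-residue-block do not inflate the bound. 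Once these are pinned down with absolute constants, choosing $c$ to absorb all of them and restricting $\epsilon < \frac{1}{2c}$ so that $1 - c\epsilon > \frac12$ completes the proof.
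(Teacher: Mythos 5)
Your proposal is correct and follows essentially the same route as the paper, which derives Theorem \ref{theorem constant coefficients} as a special case of Theorem \ref{theorem general way of being biased}: the ingredients there are Lemma \ref{lemma link with random variables general}, the variance bound of Lemma \ref{lemma bounds on variance general}, and Chebyshev's inequality, exactly the three steps you sketch, specialized to the constant-coefficient weights. One small slip to correct: in your point (ii) the variance upper bound should read $\V[X]\ll \phi(q)\,\norm{\al}_2^2\log q$, not $\V[X]\ll \norm{\al}_2^2\log q\cdot\rho(q)^2/\phi(q)$ — the $\rho(q)$ factor comes from the mean $\mu=\rho(q)\,k_Nk_R$ rather than from the variance, and indeed your display formula earlier in the proposal has this right.
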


\begin{remark}
Fix $0<\epsilon<\frac 1{2c}$ and define $N_{\epsilon}(q)$ to be to number of positive integers $k_N$, $k_R$ for which $k_N \leq (1-\frac 1{\rho(q)}) \phi(q)$, $k_R\leq \frac{\phi(q)}{\rho(q)}$, and 
$$ \frac 1{k_N}+\frac 1{k_R} < \epsilon  \frac{\rho(q)^2}{\phi(q)\log q}. $$
Then, for values of $q$ for which $\rho(q) \geq \epsilon^{-2} \log q$, we have that  $N_{\epsilon}(q)$ tends to infinity as $q\rightarrow \infty$. Hence, for values of $q$ for which $\log q =o(\rho(q))$, \eqref{equation highly biased races with constant coefficients} has a large number of solutions.
\end{remark}

\begin{remark}
\label{remark greg}
Theorem \ref{theorem constant coefficients} shows the existence of highly biased races with the same number of residue classes on each side of the inequality. Indeed, for moduli $q$ with $\log q=o(\rho(q))$, taking $k_R=k_N$ with $ \phi(q) \log q/\rho(q)^2=o(k_R)$ and choosing any residue classes $a_1,...,a_{k_N+k_R}$ gives a race with $\delta(q;\a,\al)=1-o(1)$.
\end{remark}

\begin{remark}
In Theorem \ref{theorem extreme Dirichlet races}, we have $k_N=\left(1-\frac 1{\rho(q)} \right) \phi(q)$ and $k_R= \frac{\phi(q)}{\rho(q)}$, which explains why we obtained a highly biased race when $\rho(q)$ was large compared to $\log q$.
\end{remark}

Here is our most general class of highly biased races.
\begin{theorem}
\label{theorem general way of being biased}
Assume GRH and LI. There exists an absolute constant $c>0$ such that if for some $0<\epsilon<\frac 1{2c}$ we have
\begin{equation}
\frac{\sum_{i=1}^k\alpha_i^2}{\left( \sum_{i=1}^k \epsilon_i\alpha_i\right)^2} < \epsilon \frac{\rho(q)^2}{\phi(q)\log q},
\label{equation condition pour grosses courses generales}
\end{equation}
then
$$ \delta(q;\a,\al) >1-c\epsilon.$$
\end{theorem}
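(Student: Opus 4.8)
\noindent The plan is to pass to the Rubinstein--Sarnak limiting random variable attached to the race and then apply Chebyshev's inequality: the hypothesis \eqref{equation condition pour grosses courses generales} has been arranged precisely so that the squared ratio of the mean of this variable to its standard deviation is $\gg 1/\epsilon$, which is exactly what Chebyshev needs. First I would invoke Lemma \ref{lemma link with random variables general} (the weighted-combination analogue of the framework of \cite{RubSar}): under GRH and LI there is a random variable $X=X(q;\a,\al)$ whose distribution has no atom at $0$ and for which $\delta(q;\a,\al)=\P(X>0)$, with
$$ \E[X] = -\sum_{i=1}^{k}\alpha_i\,c_q(a_i), \qquad c_q(a):=-1+\#\{t\bmod q:\ t^2\equiv a\}, $$
and, LI being used to annihilate the off-diagonal correlations between zeros of distinct $L$-functions,
$$ \V[X] = \sum_{\substack{\chi\bmod q\\ \chi\neq\chi_0}}\ \Big|\sum_{i=1}^{k}\alpha_i\,\overline{\chi}(a_i)\Big|^2\, b(\chi), \qquad b(\chi):=\sum_{\gamma_\chi}\frac{1}{\tfrac14+\gamma_\chi^2}. $$

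Next I would evaluate these two quantities. For the mean, the $2$-torsion of $(\Z/q\Z)^{\times}$ has order $\rho(q)=[G:G^2]$, so $\#\{t\bmod q: t^2\equiv a\}$ equals $\rho(q)$ when $a\equiv\square\bmod q$ and $0$ otherwise, i.e. it is $\rho(q)\epsilon_a$; since $\sum_i\alpha_i=0$ this gives
$$ \E[X] = -\sum_{i}\alpha_i\big(\rho(q)\epsilon_i-1\big) = -\rho(q)\sum_{i}\epsilon_i\alpha_i = \rho(q)\,\Big|\sum_{i}\epsilon_i\alpha_i\Big| > 0, $$
using the normalization $\sum_i\epsilon_i\alpha_i<0$. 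For the variance, the classical zero count $N(T,\chi)\ll T\log(q_\chi(|T|+2))$ together with partial summation gives $b(\chi)\ll\log q$ with an absolute implied constant, uniformly in $\chi\bmod q$; and orthogonality of characters gives
$$ \sum_{\chi\neq\chi_0}\Big|\sum_i\alpha_i\overline{\chi}(a_i)\Big|^2 = \phi(q)\sum_{a\bmod q}\Big|\sum_{i:\,a_i\equiv a}\alpha_i\Big|^2 - \Big|\sum_i\alpha_i\Big|^2 = \phi(q)\sum_i\alpha_i^2, $$
the last step using $\sum_i\alpha_i=0$ and the fact that the $a_i$ are distinct (as in all the intended applications). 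Hence $\V[X]\ll\phi(q)(\log q)\sum_i\alpha_i^2$ with an absolute constant.

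Finally, since $\E[X]>0$, Chebyshev's inequality yields
$$ 1-\delta(q;\a,\al) = \P(X\le 0) \le \P\big(|X-\E X|\ge \E X\big) \le \frac{\V[X]}{(\E X)^2} \ll \frac{\phi(q)(\log q)\sum_i\alpha_i^2}{\rho(q)^2\big(\sum_i\epsilon_i\alpha_i\big)^2}, $$
and the right-hand side is $<c\epsilon$, with $c$ the resulting absolute constant, exactly by \eqref{equation condition pour grosses courses generales}; the constraint $\epsilon<\tfrac1{2c}$ merely makes $\delta>1-c\epsilon>\tfrac12$, consistent with the sign normalization. As a byproduct one recovers Theorem \ref{theorem constant coefficients}: for the constant-coefficient vectors there one computes $\sum_i\alpha_i^2 = k_N k_R^2 + k_R k_N^2 = k_Nk_R(k_N+k_R)$ and $\sum_i\epsilon_i\alpha_i = -k_Rk_N$, so that $\sum_i\alpha_i^2/(\sum_i\epsilon_i\alpha_i)^2 = \tfrac1{k_N}+\tfrac1{k_R}$ and \eqref{equation highly biased races with constant coefficients} becomes precisely \eqref{equation condition pour grosses courses generales}.

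The main obstacle will be bookkeeping rather than conceptual: making the variance estimate genuinely \emph{uniform} in $q$ (that $b(\chi)\ll\log q$ with an absolute constant for \emph{every} $\chi\bmod q$, including low-conductor ones, and that the orthogonality identity above is not spoiled by repeated residue classes), and checking that $\delta(q;\a,\al)=\P(X>0)$ holds exactly, which relies on the no-atom property supplied by LI. Crucially, no sharp large-deviation input is needed here: the assertion $\delta>1-c\epsilon$ only demands a bound of the shape $1-\delta\ll\epsilon$, and Chebyshev already delivers this; the finer machinery of \cite{Mo,MoOd,RubSar,Lam} is reserved for the exponential rate of Theorem \ref{theorem more precise large deviation}.
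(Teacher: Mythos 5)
Your proposal is correct and follows essentially the same path as the paper's proof: pass to the Rubinstein--Sarnak random variable $X_{q;\a,\al}$ (Lemma \ref{lemma link with random variables general}), compute $\E[X]=-\rho(q)\sum_i\epsilon_i\alpha_i$ and bound $\V[X]\ll\phi(q)(\log q)\sum_i\alpha_i^2$ via orthogonality and the sum over zeros (this is exactly the upper bound in Lemma \ref{lemma bounds on variance general}), then apply Chebyshev's inequality, which the hypothesis \eqref{equation condition pour grosses courses generales} was designed to feed. The only cosmetic difference is that you apply Chebyshev directly to $\P(X\le 0)$ (legitimate under LI, since the density exists and the distribution has no atom), whereas the paper routes through Lemma \ref{lemma bias factor chebyshev}, which was built for the GRH-only setting and bounds $\P[X\le 1]$ at the cost of a harmless factor of $2$ and a side condition $\E[X]\ge 4$.
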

\begin{remark}
Trivially, one has 
$$ \frac{\sum_{i=1}^k\alpha_i^2}{\left( \sum_{i=1}^k \epsilon_i\alpha_i\right)^2} \geq \frac 1{k_R}, $$
where $k_R:=\sum_{i=1}^k\epsilon_i$.
Hence, for \eqref{equation condition pour grosses courses generales} to be satisfied, one needs $k_R$ to be larger than 
$$\epsilon^{-1}\frac{\phi(q)\log q}{\rho(q)^2}.$$
Since $k_R\leq \frac{\phi(q)}{\rho(q)}$, this imposes the following condition on $q$:
$$ \rho(q) \geq \epsilon^{-1}\log q. $$
\end{remark}
\begin{remark}
The goal of Theorem \ref{theorem general way of being biased} is to give a large class of biased races, without necessarily being precise on the value of $\delta(q;\a,\al)$. One can use the Montgomery-Odlyzko bounds \cite{MoOd} to obtain more precise estimates in some particular cases.
\end{remark}

The previous examples of highly biased races all have the property that the number of residue classes involved is very large in terms of $q$ (it is at least $q^{1-o(1)}$). In the next theorem we show that this condition is necessary, and that moreover highly biased are very particular, in the sense that they must satisfy precise conditions.

\begin{theorem}
\label{theorem limitations generales}
Assume GRH and LI. There exists absolute positive constants $K_1,K_2$ and $0<\eta<1/2$ such that if $k\leq K_1 \phi(q)$ and
\begin{equation}
\frac{\left( \sum_{i=1}^k \epsilon_i\alpha_i\right)^2}{\sum_{i=1}^k\alpha_i^2} \leq K_2 \frac{\phi(q) \log (3\phi(q)/k)}{\rho(q)^2},  
\label{equation hypothesis thm not biased}
\end{equation} 
then
\begin{equation}
 \delta(q;\a,\al)\leq 1-\eta.
 \label{equation conclusion of theorem not biased}
\end{equation}
(Hence this race cannot be too biased.)
\end{theorem}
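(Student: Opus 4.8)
The plan is to run the Rubinstein--Sarnak probabilistic model for this race, use hypothesis \eqref{equation hypothesis thm not biased} to force the mean of the relevant limiting random variable to be no more than a fixed multiple of its standard deviation, and then finish by an anti-concentration (Paley--Zygmund) estimate. By Lemma~\ref{lemma link with random variables general} and the discussion preceding it, under GRH and LI we have $\delta(q;\a,\al)=\P(X>0)$, where $X=X_{q;\a,\al}=\mu+W$ with
$$\mu:=\E[X]=-\rho(q)\sum_{i=1}^k\epsilon_i\alpha_i>0,$$
and $W:=X-\mu$ a symmetric random variable whose variance $\sigma^2:=\V[W]$ is, up to absolute constants, $\sum_{\chi\bmod q,\ \chi\neq\chi_0}|\widehat{\al}(\chi)|^2\,b(\chi)$, where $\widehat{\al}(\chi):=\sum_{i=1}^k\alpha_i\overline{\chi(a_i)}$ and $b(\chi):=\sum_{\gamma}(\tfrac14+\gamma^2)^{-1}$ over $\gamma$ with $L(\tfrac12+i\gamma,\chi^*)=0$, $\chi^*$ being the primitive character inducing $\chi$. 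Merging repeated residues among the $a_i$ (which only decreases $k$ and leaves the race unchanged) we may assume the $a_i$ are distinct, so that $\sum_{\chi\bmod q}|\widehat{\al}(\chi)|^2=\phi(q)\sum_i\alpha_i^2$ by Plancherel on $(\Z/q\Z)^\times$.

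The heart of the matter, and the step I expect to be the main obstacle, is the variance lower bound
$$\sigma^2\ \geq\ c_0\,\phi(q)\,\log\!\Big(\frac{3\phi(q)}{k}\Big)\sum_{i=1}^k\alpha_i^2\qquad(k\leq K_1\phi(q))$$
for absolute constants $c_0>0$ and $K_1\in(0,\tfrac12)$. I would prove this from two ingredients. \emph{(i) A uniform lower bound on the zero-weight:} every primitive $\chi$ has $b(\chi)\geq c_1\log(3q_\chi)$ for an absolute $c_1>0$. This follows from the explicit-formula identity $\sum_{\gamma}(\tfrac94+\gamma^2)^{-1}=\tfrac13\log q_\chi+O(1)$ (a standard manipulation of the logarithmic-derivative formula for $L(s,\chi)$ at $s=2$) together with the trivial $(\tfrac14+\gamma^2)^{-1}\geq(\tfrac94+\gamma^2)^{-1}$, which gives $b(\chi)\geq\tfrac13\log q_\chi-O(1)$; for the bounded conductors one uses Odlyzko's theorem that every $L(s,\chi)$ has a zero of height $\leq14.135$, so $b(\chi)\geq(14.135^2+\tfrac14)^{-1}>0$. \emph{(ii) A Fourier-dispersion estimate:} for $\chi$ of conductor $f$, $\widehat{\al}(\chi)$ depends only on the projection of $\al$ to $(\Z/f\Z)^\times$, a vector with at most $k$ non-zero coordinates, so $\sum_{\mathrm{cond}(\chi)\leq M}|\widehat{\al}(\chi)|^2\leq k\big(\sum_{f\leq M}\phi(f)\big)\sum_i\alpha_i^2\leq kM^2\sum_i\alpha_i^2$ for every $M\geq1$. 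Taking $M:=\lfloor\sqrt{\phi(q)/(2k)}\rfloor\geq1$ then leaves at least half the total Fourier mass $\phi(q)\sum_i\alpha_i^2$ on characters of conductor $>M$, and for each of those $b(\chi)\geq c_1\log(3q_\chi)\geq c_1\log(3M)$, which is comparable to $\log(3\phi(q)/k)$ when the latter is large and is $\geq c_1\log 9>0$ otherwise; summing yields the displayed bound.

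Granting this, \eqref{equation hypothesis thm not biased} gives $\mu^2=\rho(q)^2\big(\sum_i\epsilon_i\alpha_i\big)^2\leq K_2\,\phi(q)\log(3\phi(q)/k)\sum_i\alpha_i^2\leq(K_2/c_0)\,\sigma^2$, so $\mu\leq C\sigma$ with $C:=\sqrt{K_2/c_0}<1$ once the absolute constant $K_2$ is taken smaller than $c_0$. It remains to bound the lower tail of $W$ from below. Writing $W=\sum_jW_j$ with $W_j=A_j\cos\Theta_j$ and the $\Theta_j$ independent and uniform on $[0,2\pi)$ (the point at which LI is invoked), one computes $\E[W_j^4]=\tfrac32(\E[W_j^2])^2$, hence $\E[W^4]=3\sigma^4-\tfrac32\sum_j(\E[W_j^2])^2\leq3\sigma^4$. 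Paley--Zygmund applied to $W^2$ then gives $\P(|W|>\sqrt\theta\,\sigma)\geq(1-\theta)^2/3$ for $0<\theta<1$, and by the symmetry of $W$, $\P(W<-\sqrt\theta\,\sigma)\geq(1-\theta)^2/6$. Taking $\theta=C^2$ (so $\sqrt\theta\,\sigma=C\sigma\geq\mu$), and using that there is no atom of $X$ at $0$ under LI,
$$1-\delta(q;\a,\al)=\P(X<0)=\P(W<-\mu)\ \geq\ \P(W<-C\sigma)\ \geq\ \frac{(1-C^2)^2}{6}=:\eta,$$
a fixed element of $(0,\tfrac16)$, which is the asserted conclusion with absolute constants $K_1,K_2,\eta$. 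The anti-concentration half is soft once the clean fourth-moment bound $\E[W^4]\leq3\sigma^4$ is in hand; all the difficulty is concentrated in the variance estimate, where extracting the \emph{sharp} factor $\log(3\phi(q)/k)$ from the tension between the smallness of the support of $\al$ (Plancherel) and the growth $b(\chi)\asymp\log q_\chi$, and making the absolute constants and the boundary regimes ($k$ close to $\phi(q)$, small conductors) fit together, is where care is needed.
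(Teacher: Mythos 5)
Your proof is correct, and the key computation (the variance lower bound $\V[X_{q;\a,\al}] \gg \phi(q)\norm{\al}_2^2\log(3\phi(q)/k)$) is exactly the content of the paper's Lemma~\ref{lemma bounds on variance general}; your derivation via discarding conductors $\leq M$ with $M\asymp\sqrt{\phi(q)/k}$ is essentially the same as the paper's, which discards conductors $\leq L=(3\phi(q)/k)^{1/3}$, with the same two ingredients ($b(\chi)\gg\log q^*$ from \eqref{equation sum over zeros B(chi)}, plus the small-conductor count of Lemma~\ref{lemma small conductors}). Where you genuinely diverge is the anti-concentration step. The paper proves a quantitative central limit theorem (Lemma~\ref{lemma clt general}), rescales $\al$ to $\be:=e^{-K}\al/\norm{\al}_1$ so the characteristic-function expansion is valid on $|\xi|\leq\tfrac35 e^K$, and then uses Berry--Esseen to compare $\P[Y_{q;\a,\be}>-B]$ with the corresponding Gaussian tail; this requires the very small hypothesis $k\leq e^{-e^{4K}}\phi(q)$ to keep the CLT error under control. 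You instead exploit the explicit structure $W=\sum_j A_j\cos\Theta_j$: the identity $\E[W_j^4]=\tfrac32(\E[W_j^2])^2$ gives the clean bound $\E[W^4]\leq 3\sigma^4$, and Paley--Zygmund applied to $W^2$, together with the symmetry of $W$, gives $\P(W<-\sqrt{\theta}\,\sigma)\geq(1-\theta)^2/6$. This is shorter and more elementary, and also yields a much less draconian constraint $K_1\in(0,\tfrac12)$ on $k$. The trade-off is that Paley--Zygmund only produces a nontrivial tail bound when $\mu<\sigma$, which forces you to take $K_2<c_0$; the paper's CLT route lets $K_2$ be any fixed $K\geq1$ (at the price of the tiny $K_1$), and also feeds into the density computations elsewhere in the paper. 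Both routes establish the theorem as stated, since it only asserts existence of some absolute $K_1,K_2,\eta$. One small caveat: the reduction to distinct $a_i$ that you invoke deserves a sentence of justification, since merging $a_i=a_j$ can change $\norm{\al}_2^2$ in either direction and hence can strengthen the hypothesis \eqref{equation hypothesis thm not biased}; however, the paper's Plancherel step in Lemma~\ref{lemma bounds on variance general} implicitly assumes distinct $a_i$ as well, so this is a shared convention rather than a gap in your argument specifically.
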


\begin{remark}
Applying the Cauchy-Schwartz and using that $k_R:=\sum_{i=1}^k \epsilon_i \leq \phi(q)/\rho(q)$, one sees that if $\rho(q)\leq K_2 \log \left(3\phi(q)/k \right)$, then whatever $\a$ and $\al$ are, \eqref{equation hypothesis thm not biased} holds. Moreover, in the range $\rho(q)> K_2 \log \left(3\phi(q)/k \right)$ we have that if $k_R \leq K_2 \phi(q)/\rho(q)^2$, then \eqref{equation hypothesis thm not biased} holds. We conclude that a necessary condition to obtain a highly biased race is that $k_R\gg \phi(q)/\rho(q)^2$.
\end{remark}

An interesting feature of prime number races is Skewes' number. It is by definition the smallest $x_0$ for which
$$ \pi(x_0)> Li(x_0).$$
This number has been extensively studied since Skewes' 1933 paper in which he showed under GRH that 
$$ x_0<10^{10^{10^{34}}}.$$
The GRH assumption has since then be removed and the upper bound greatly reduced; we refer the reader to \cite{BaHu} for the list of such improvements. The current record is due to Bays and Hudson \cite{BaHu}, who showed that  $x_0<1.3983 \times 10^{316}$, and moreover this bound is believed to be close to the true size of $x_0$. 

One could also study the generalized Skewes' number
$$ x_{q;a,b} := \inf \{ x : \pi(x;q,a)<\pi(x;q,b) \}. $$ 
However, two-way prime number races become less and less biased as $q$ grows, that is $\delta(q;a,b) \rightarrow \frac 12$ uniformly in $a$ and $b$ coprime to $q$. Hence, for large $q$ we expect this generalized Skewes number to be small and uninteresting.

The situation is quite different with the highly biased we constructed, in fact we expect the Skewes number
$$ x_{q} := \inf \{ x : (\rho(q)-1) \pi(x;q,NR)<\pi(x;q,R) \}$$
to tend to infinity as $\rho(q)/\log q'$ tends to infinity ($q'$ is the radical of $q$). One can then ask the following question: how fast does it tend to infinity? Similar arguments to those of Montgomery \cite{Mo} and of Ng \cite{Ng} allow us to make the speculation that the answer is double-exponentially.

\begin{conjecture}
As $\rho(q)/\log q'$ tends to infinity we have
$$ \log\log x_q \asymp \frac{\rho(q)}{\log q'}. $$
\end{conjecture}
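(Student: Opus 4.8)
The plan is to deduce this, under GRH and LI, from Theorem~\ref{theorem more precise large deviation} together with a first-occurrence principle for $B^{2}$-almost periodic functions, in the spirit of Montgomery~\cite{Mo} and Ng~\cite{Ng}. First I would set $x=e^{y}$ and attach to the race the normalized error term
$$ E_{q}(y):=\frac{\phi(q)\,y}{e^{y/2}}\left(\frac{\pi(e^{y};q,NR)}{|NR|}-\frac{\pi(e^{y};q,R)}{|R|}\right), $$
which is the quantity analysed, for this very race, in the proof of Theorem~\ref{theorem more precise large deviation}. Under GRH, $E_{q}$ is $B^{2}$-almost periodic and has the explicit expansion
$$ E_{q}(y)=\mu_{q}+\frac{2\rho(q)}{\rho(q)-1}\sum_{\gamma>0}\frac{\cos(\gamma y+\theta_{\gamma})}{\sqrt{\tfrac14+\gamma^{2}}}+o(1), $$
the sum running over the ordinates $\gamma>0$ of the zeros $\tfrac12+i\gamma$ in $Z(q)$; here $\mu_{q}\sim\rho(q)$ is the prime-square bias (the squares of primes all lie in quadratic residue classes, so the residue primes are in deficit), and the variance $\sigma_{q}^{2}$ of the oscillating part satisfies $\sigma_{q}^{2}\asymp\rho(q)\log q'$, so that $\mu_{q}/\sigma_{q}\asymp(\rho(q)/\log q')^{1/2}$. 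The inequality defining $x_{q}$ is exactly the event that the normalized count of residue primes overtakes that of the non-residue primes, i.e.\ $E_{q}(y)<0$; hence $\log x_{q}=\inf\{y\ge 2:E_{q}(y)<0\}$, and $x_{q}$ records the first time the oscillating part of $E_{q}$ dips below $-\mu_{q}$.

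The next step is to feed in the large-deviation estimate. Under LI the oscillating part of $E_{q}$ equidistributes, in the sense of Rubinstein--Sarnak, like $\sum_{\gamma}\sigma_{\gamma}X_{\gamma}$ with $\sigma_{\gamma}=\tfrac{2\rho(q)}{\rho(q)-1}(\tfrac14+\gamma^{2})^{-1/2}$ and the $X_{\gamma}$ independent copies of the cosine of a uniform angle, and the Montgomery--Odlyzko method~\cite{MoOd}---which is precisely what is carried out in the proof of Theorem~\ref{theorem more precise large deviation}---gives that $\tfrac1Y\,\mathrm{meas}\{y\in[2,Y]:E_{q}(y)<0\}$ converges, as $Y\to\infty$, to $1-\delta(q;NR,R)=:e^{-\Lambda_{q}}$, where $\Lambda_{q}\asymp\rho(q)/\log q'$ by Theorem~\ref{theorem more precise large deviation}. (The underlying deviation is of moderate type: the saddle-point parameter $\lambda^{\ast}\asymp\mu_{q}/\sigma_{q}^{2}\asymp1/\log q'$ stays bounded, so it is produced by a slight simultaneous bias of all the ordinates in $Z(q)$ rather than by a drastic resonance among the lowest ones.)

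Finally I would invoke Montgomery's principle~\cite{Mo, Ng}, that such a function attains its statistically predicted fluctuations as soon as the measure of the relevant set permits. Since $\{y:E_{q}(y)<0\}$ is a union of intervals whose lengths are bounded below in terms of $q$ alone, balancing the total measure against the number of such intervals leads one to expect the first of them near the point $Y$ at which $Y\,e^{-\Lambda_{q}}\asymp1$, that is $\log x_{q}\asymp e^{\Lambda_{q}}$ to leading order, whence
$$ \log\log x_{q}\asymp\Lambda_{q}\asymp\frac{\rho(q)}{\log q'}, $$
as asserted.

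The hard part, unsurprisingly, is this last step, which is exactly as inaccessible as the analogous question for the classical Skewes number. For the upper bound on $x_{q}$ one would need an effective form of LI, controlling the simultaneous rational approximations of the ordinates in $Z(q)$ up to a height that is a fixed power of $e^{\Lambda_{q}}$; without such input one can only assert that $\{y:E_{q}(y)<0\}$ has the stated density, not that it begins by the predicted time (just as one knows only $x_{0}<1.3983\times10^{316}$ and not the conjectural true size of the classical Skewes number). For the lower bound on $x_{q}$ one would need an upper bound for $E_{q}$ valid at \emph{every} point of $[2,e^{c\Lambda_{q}}]$, whereas the argument above controls $E_{q}$ only outside a set of small measure. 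This is why the statement is left as a conjecture; the heuristic is nonetheless robust at the level of orders of magnitude---in the classical case $q=1$ it predicts a number whose double logarithm is of size $\Lambda=-\log(1-\delta(1))\approx 15$, against $\log\log x_{0}\approx 6.6$---and, together with the arguments of Montgomery and Ng, it is what justifies the conjecture.
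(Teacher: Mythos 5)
This statement is a Conjecture, not a theorem, and the paper offers no proof---only the one-line remark immediately preceding it that ``Similar arguments to those of Montgomery \cite{Mo} and of Ng \cite{Ng} allow us to make the speculation that the answer is double-exponentially.'' Your heuristic is precisely the reasoning the paper has in mind: Theorem~\ref{theorem more precise large deviation} gives $1-\delta(q;NR,R)=e^{-\Lambda_q}$ with $\Lambda_q\asymp\rho(q)/\log q'$, and the Montgomery--Ng first-occurrence principle then predicts a first crossing near the $Y$ for which $Y e^{-\Lambda_q}\asymp 1$, i.e.\ $\log\log x_q\asymp\Lambda_q$. You also correctly identify the two obstructions (an effective form of LI for the upper bound; pointwise rather than measure-theoretic control for the lower bound) that keep this a conjecture, so your reconstruction matches the paper's intent and nothing needs correcting.
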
 

\section*{Acknowledgements}
I would like to thank my former advisor Andrew Granville for his very interesting question which motivated this work, and for his comments and encouragement. I would also like to thank Enrico Bombieri and Peter Sarnak for their advice and their encouragement, and the Institute for Advanced Study for providing excellent research conditions. I thank Kevin Ford, Youness Lamzouri and the people at the University of Illinois at Urbana-Champaign for their hospitality and for very fruitful conversations which led me to consider general linear combinations of prime counting functions. I thank Jan-Christoph Schlage-Puchta for Remark \ref{remark schlage} and for suggesting the current proof of Lemma \ref{lemma moments in terms of E(y)}. I also thank Carl Pomerance for suggesting Remark \ref{remark pomerance}, and Greg Martin for suggesting Remark \ref{remark greg}. I thank Barry Mazur for motivating me to weaken the Linear Independence Hypothesis.
Finally, I thank Byungchul Cha, Ke Gong, Chen Meiri, Nathan Ng and Anders Södergren for helpful conversations. This work was supported by an NSERC Postdoctoral Fellowship, as well as NSF grant DMS-0635607.

\section{Results without the linear independence hypothesis}
\label{section without LI}

The goal of this section is to prove Theorem \ref{theorem extreme Dirichlet without LI} (from which the first part of Theorem \ref{theorem extreme Dirichlet races} clearly follows). We first note that if $A=NR$ and $B=R$, then \eqref{equation race between A and B} is equivalent to
$$\pi(x;q,NR)>(\rho(q)-1)\pi(x;q,R).$$

\begin{lemma}
\label{lemma error term}
Assuming GRH, we have that
 $$ E_q(x):=\frac{\pi(x;q,NR)-(\rho(q)-1)\pi(x;q,R)}{\sqrt x/\log x} =\rho(q)-1+\sumchi \sum_{\gamma_{\chi}}  \frac{x^{i\gamma_{\chi}}}{\rho_{\chi}}+o(1). $$
\end{lemma}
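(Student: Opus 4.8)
The plan is to turn the numerator into a sum of prime-counting functions twisted by the real characters modulo $q$, and then to feed each of those into the explicit formula; the constant $\rho(q)-1$ will come out as an accumulation of $\rho(q)-1$ copies of the classical Chebyshev bias term $-\tfrac12\operatorname{li}(x^{1/2})$.

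First I would detect quadratic residues by orthogonality. The characters $\chi\bmod q$ with $\chi^2=\chi_0$ are precisely those that factor through $(\Z/q\Z)^\times/((\Z/q\Z)^\times)^2$, a group of order $\rho(q)=[G:G^2]$; hence $\sum_{\chi^2=\chi_0}\chi(n)$ equals $\rho(q)$ when $(n,q)=1$ and $n$ is a square $\bmod q$, and $0$ when $(n,q)=1$ and $n$ is a non-residue. Writing $\pi(x,\chi):=\sum_{p\leq x}\chi(p)$ and summing over primes $p\le x$, this gives $\pi(x;q,R)=\rho(q)^{-1}\bigl(\pi(x,\chi_0)+\sumchi\pi(x,\chi)\bigr)$, and since $\pi(x;q,NR)+\pi(x;q,R)=\pi(x,\chi_0)$ we obtain the clean identity
$$ \pi(x;q,NR)-(\rho(q)-1)\pi(x;q,R)=-\sumchi\pi(x,\chi). $$
So after dividing by $\sqrt x/\log x$ it suffices to prove, for each fixed real non-principal $\chi\bmod q$ and assuming GRH,
$$ \pi(x,\chi)=-\frac{\sqrt x}{\log x}\Bigl(1+\sum_{\gamma_\chi}\frac{x^{i\gamma_\chi}}{\rho_\chi}\Bigr)+o\Bigl(\frac{\sqrt x}{\log x}\Bigr), $$
and then sum over the $\rho(q)-1$ such characters (this is the analogue for general $q$ of the computation in \cite{RubSar}).

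To prove the last display I would pass to the weighted prime-power sum $\Pi(x,\chi):=\sum_{p^k\le x}\chi(p)^k/k$. Since $\chi$ is real, $\chi^k=\chi_0$ for even $k$ and $\chi^k=\chi$ for odd $k$, so
$$ \pi(x,\chi)=\Pi(x,\chi)-\tfrac12\pi(x^{1/2},\chi_0)-\sum_{m\geq 3}\tfrac1m\pi(x^{1/m},\chi^m), $$
where, since $\abs{\chi}\leq 1$, the tail over $m\geq 3$ is trivially $\ll x^{1/3}=o(\sqrt x/\log x)$. For the second term, discarding the finitely many Euler factors at $p\mid q$ costs only $O(\omega(q))$, and then GRH for $\zeta$ gives $\pi(x^{1/2},\chi_0)=\operatorname{li}(x^{1/2})+O(x^{1/4}\log x)$, so $\tfrac12\pi(x^{1/2},\chi_0)=\sqrt x/\log x+O(\sqrt x/(\log x)^2)$: this is the term that produces the $1$ inside the display, hence the $\rho(q)-1$ after summation. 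For $\Pi(x,\chi)$ I would use the classical (conditionally convergent) explicit formula $\Pi(x,\chi)=-\sum_{\rho}\operatorname{li}(x^\rho)+O(\log(qx))$, valid for non-principal $\chi$; if $\chi$ is imprimitive one runs this for the inducing primitive character, whose non-trivial zeros are exactly the $\rho_\chi$ and whose finitely many extra Euler factors cost $O(\log(qx))$. Then one compares $\operatorname{li}(x^\rho)$ with $x^\rho/(\rho\log x)$: under GRH the difference is $O(\sqrt x/(\abs{\rho}^2(\log x)^2))$, so it is absolutely summable with total $O(\sqrt x\log q/(\log x)^2)=o(\sqrt x/\log x)$. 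Hence $\Pi(x,\chi)=-(\log x)^{-1}\sum_\rho x^\rho/\rho+o(\sqrt x/\log x)$, and writing each zero as $\rho=\rho_\chi=\tfrac12+i\gamma_\chi$ turns $\sum_\rho x^\rho/\rho$ into $\sqrt x\sum_{\gamma_\chi}x^{i\gamma_\chi}/\rho_\chi$. Collecting the three contributions gives the displayed asymptotic for $\pi(x,\chi)$, and summing over the $\rho(q)-1$ real non-principal characters and dividing by $\sqrt x/\log x$ proves the lemma.

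I do not expect a real obstacle here; the one genuinely delicate point is the bookkeeping forced by the conditional convergence of $\sum_\rho x^\rho/\rho$. One must never split the zero-sum in a way that leaves a divergent tail (which is why one compares $\operatorname{li}(x^\rho)$ with $x^\rho/(\rho\log x)$ rather than manipulating $x^\rho/\rho$ directly; alternatively, if one prefers to route through $\psi(x,\chi)$ and then partial summation, one uses the truncated explicit formula throughout). The other point to get right is the isolation of the secondary main term: because $\chi(p)^2=1$ for every $p\nmid q$, each of the $\rho(q)-1$ real non-principal characters contributes a full $-\tfrac12\operatorname{li}(x^{1/2})$, and it is precisely the accumulation of these $\rho(q)-1$ copies --- the exact analogue of the classical bias in the $\operatorname{li}(x)$ versus $\pi(x)$ race --- that will drive the highly biased races in the rest of the paper. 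Everything else (the passages between $\psi$, $\Pi$ and $\pi$, the trivial-zero and constant terms, the removal of ramified Euler factors) is entirely standard and, $q$ being fixed, is absorbed into the $o(1)$.
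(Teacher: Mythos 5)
Your proof is correct and uses essentially the same approach as the paper: detect quadratic residues via the orthogonality of the real characters $\bmod\, q$, feed the resulting linear combination into the explicit formula, and observe that the $p^2$ terms accumulate to the $\rho(q)-1$ bias. The only difference is bookkeeping: the paper works with $\psi(x,\chi)$ and $\sum_\rho x^\rho/\rho$ and defers a single summation by parts to the end, while you work directly with $\pi(x,\chi)$, $\Pi(x,\chi)$ and $\operatorname{li}(x^\rho)$; your intermediate identity $\pi(x;q,NR)-(\rho(q)-1)\pi(x;q,R)=-\sumchi\pi(x,\chi)$ is a tidy repackaging of the same orthogonality step.
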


\begin{proof}

Let $b$ be an invertible reduced residue $\bmod q$. We will use the orthogonality relation
\begin{equation}
\sum_{\substack{ \chi\bmod q \\ \chi^2=\chi_0 \\ \chi\neq \chi_0}} \chi(b) = \begin{cases}
\rho(q)-1 &\text{ if } b\equiv \square \bmod q \\
-1 & \text{ if } b \not\equiv \square \bmod q.
\end{cases}
\label{equation orthogonality relations residues non residues}
\end{equation} 
The explicit formula gives
\begin{equation}
 \sum_{\substack{ \chi\bmod q \\ \chi^2=\chi_0 \\ \chi\neq \chi_0}} \psi(x,\chi) =- \sum_{\substack{ \chi\bmod q \\ \chi^2=\chi_0 \\ \chi\neq \chi_0}}  \sum_{\rho_{\chi}} \frac{x^{\rho_{\chi}}}{\rho_{\chi}}+O_q(\log x),
 \label{equation explicit formula}
\end{equation}
where $\rho_{\chi}$ runs over the non-trivial zeros of $L(s,\chi)$. The left hand side of \eqref{equation explicit formula} is equal to
\begin{align*}&\sum_{\substack{ \chi\bmod q \\ \chi^2=\chi_0 \\ \chi\neq \chi_0}} \sum_{p\leq x} \chi(p) \log p  + \sum_{\substack{ \chi\bmod q \\ \chi^2=\chi_0 \\ \chi\neq \chi_0}} \sum_{p^2\leq x}\chi(p)^2 \log p +O(x^{\frac 13}) \\ &= (\rho(q)-1)\sum_{\substack{p\leq x \\ p  \equiv \square \bmod q}} \log p - \sum_{\substack{p\leq x \\ p  \not\equiv \square \bmod q}} \log p +(\rho(q)-1)\sqrt x +o(\sqrt x),
\end{align*}
by \eqref{equation orthogonality relations residues non residues} and the Prime Number Theorem. Combining this with a standard summation by parts we get that
 $$ \frac{\pi(x;q,NR)-(\rho(q)-1)\pi(x;q,R)}{\sqrt x/\log x} =\rho(q)-1+\sum_{\substack{ \chi\bmod q \\ \chi^2=\chi_0 \\ \chi\neq \chi_0}} \sum_{\gamma_{\chi}}  \frac{x^{i\gamma_{\chi}}}{\rho_{\chi}}+o(1). $$
\end{proof}

\begin{lemma}
The quantity $E_q(x)$ defined in Lemma \ref{lemma error term} has a limiting logarithmic distribution, that is there exists a Borel measure $\mu_q$ on $\R$ such that for any bounded Lipschitz continuous function $f:\R \rightarrow \R$ we have
$$ \lim_{Y\rightarrow \infty} \frac 1Y \int_2^{Y} f(E_q(e^y)) dy = \int_{\mathbb R} f(t) d\mu_q(t).$$
\label{lemma limiting distribution}
\end{lemma}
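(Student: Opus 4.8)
The plan is to establish the existence of a limiting logarithmic distribution for $E_q(x)$ by following the now-standard approach of Rubinstein and Sarnak, treating $E_q(e^y)$ as an almost periodic function of $y$ and invoking the Kronecker--Weyl equidistribution theorem. First I would recall from Lemma \ref{lemma error term} that, under GRH,
$$ E_q(e^y) = \rho(q)-1 + \sumchi \sum_{\gamma_{\chi}} \frac{e^{i y\gamma_{\chi}}}{\tfrac 12 + i\gamma_{\chi}} + o(1), $$
so it suffices to prove that the main sum has a limiting distribution, since adding a constant and an $o(1)$ term does not affect existence of the distribution (any bounded Lipschitz $f$ is uniformly continuous). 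The key analytic input is that $\sum_{\gamma_\chi} |\tfrac 12 + i\gamma_\chi|^{-1}$ diverges but $\sum_{\gamma_\chi} |\tfrac 12 + i\gamma_\chi|^{-2} < \infty$ (by the classical zero-counting estimate $N(T,\chi) \ll T\log(qT)$), so the tail of the sum over $|\gamma_\chi| > X$ is small in $L^2$ (hence in measure) uniformly in $y$.

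The main steps, in order, would be: (1) Truncate the sum at $|\gamma_\chi| \le X$, writing $E_q(e^y) = E_{q,X}(e^y) + R_X(e^y)$, and bound $\frac 1Y\int_0^Y |R_X(e^y)|^2\,dy$ by $\sum_{|\gamma_\chi|>X}|\tfrac12+i\gamma_\chi|^{-2} \to 0$ as $X\to\infty$, uniformly in $Y$, using GRH so that $x^{i\gamma_\chi} = e^{iy\gamma_\chi}$ has modulus exactly one and the cross terms $\frac 1Y\int_0^Y e^{iy(\gamma-\gamma')}\,dy$ vanish in the limit. (2) For the finite trigonometric polynomial $E_{q,X}(e^y)$, let $\gamma_1,\dots,\gamma_m$ be the distinct positive ordinates involved; by the Kronecker--Weyl theorem the curve $y \mapsto (e^{iy\gamma_1},\dots,e^{iy\gamma_m})$ equidistributes in a closed subtorus $\mathcal T_X$ of $(\R/\Z)^m$ (the closure of the one-parameter subgroup, with respect to the Haar measure on that subtorus), so $E_{q,X}(e^y)$ has a limiting distribution $\mu_{q,X}$, namely the pushforward of Haar measure on $\mathcal T_X$ under the (continuous) map that assembles the partial sum, pairing $\gamma_\chi$ with $-\gamma_\chi$ to keep things real. (3) Show the measures $\mu_{q,X}$ form a Cauchy sequence in the Lévy (or bounded-Lipschitz) metric as $X\to\infty$: this follows from step (1), since for bounded Lipschitz $f$ with constant $\le 1$, $|\int f\,d\mu_{q,X} - \int f\,d\mu_{q,X'}|$ is controlled by $\limsup_Y \frac 1Y\int_0^Y |E_{q,X}-E_{q,X'}|\,dy \le (\sum_{X < |\gamma_\chi| \le X'}|\tfrac12+i\gamma_\chi|^{-2})^{1/2}$. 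Hence $\mu_{q,X} \to \mu_q$ weakly for some Borel probability measure $\mu_q$. (4) Combine: for fixed bounded Lipschitz $f$, approximate $f(E_q(e^y))$ by $f(E_{q,X}(e^y))$ up to an error that is small in $L^1(dy/Y)$ uniformly in $Y$ (again by step (1) plus the Lipschitz bound, after discarding the contribution of the rare $y$ where $|R_X|$ is large, via Chebyshev), pass to the limit in $Y$ using step (2), then let $X\to\infty$ using step (3); the iterated limits agree, giving $\lim_Y \frac 1Y\int_2^Y f(E_q(e^y))\,dy = \int_\R f\,d\mu_q$. The change of the lower limit of integration from $2$ to $0$ contributes $O(1/Y) \to 0$ and is harmless.

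The step I expect to be the main obstacle is step (1)--(3) done \emph{uniformly in $Y$}: one must be careful that the $L^2$-smallness of the tail $R_X$ holds with a bound independent of $Y$, so that the double limit (first $Y\to\infty$, then $X\to\infty$) can be interchanged; this is exactly where GRH is used (to guarantee $|x^{i\gamma_\chi}|=1$, making $E_q$ genuinely almost periodic rather than merely having a tail that could grow). A secondary technical point is that the ordinates $\gamma_\chi$ need not be distinct or linearly independent over $\Q$ at this stage --- we are \emph{not} assuming LI --- so the limiting torus $\mathcal T_X$ may be a proper subtorus of the full torus; this is fine for mere existence of $\mu_q$ (Kronecker--Weyl still applies to the closure of the one-parameter subgroup), but it means $\mu_q$ need not be the convolution of circular densities one gets under LI. One should also note that since $E_q(e^y)$ is bounded on any finite interval and the tail estimate gives tightness, $\mu_q$ is automatically a probability measure supported on $\R$; no mass escapes to infinity.
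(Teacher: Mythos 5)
Your outline is correct and is precisely the Rubinstein--Sarnak / Akbary--Ng--Shahabi argument (truncation plus $L^2$ tail control uniform in $Y$, Kronecker--Weyl on the finite trigonometric polynomial, and a Cauchy-in-L\'evy-metric argument to pass $X\to\infty$); the paper itself simply cites \cite{RubSar} and \cite{NgSh} for this lemma rather than reproducing the proof, so you have filled in exactly the argument the paper delegates to those references.
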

\begin{proof}
This follows from Rubinstein and Sarnak's anaysis \cite{RubSar}, and from \cite{NgSh}.
\end{proof}

\begin{remark}
\label{remark schlage}
As Schlage-Puchta has pointed out to me, it is possible to show under GRH that for all but a countable set of values of $c$, the density
$$ F_q(c):= \lim_{Y\rightarrow \infty} \frac 1Y meas\{ y\leq Y : E_q(e^y) \leq c \} $$ 
exists. Moreover, one can show that in the domain where $F$ is defined,
\begin{multline*} \sup_{x<c} F_q(x) \leq \liminf_{Y\rightarrow \infty} \frac 1Y meas\{ y\leq Y : E_q(e^y) \leq c \}  \\ \leq \limsup_{Y\rightarrow \infty} \frac 1Y meas\{ y\leq Y : E_q(e^y) \leq c \} \leq  \inf_{x>c} F_q(x), 
\end{multline*}
and so in particular if $F_q(x)$ is continuous at $x=c$, then the set $\{ y\leq Y : E_q(e^y) \leq c \} $ has a density.

\end{remark}

Let $X_q$ be the random variable associated to $\mu_q$. We will show that $X_q$ can be very biased, in the sense that $\P[X_q>0]$ can be very close to $1$. To do so we will compute the first two moments of $E_q(e^y)$, which we relate to the random variable $X_q$.

\begin{lemma}
We have that 
$$ \lim_{Y\rightarrow \infty} \frac 1Y \int_2^{Y} E_q(e^y) dy = \int_{\mathbb R} t d\mu_q(t), $$
$$ \lim_{Y\rightarrow \infty} \frac 1Y \int_2^{Y} E_q(e^y)^2 dy = \int_{\mathbb R} t^2 d\mu_q(t). $$
\label{lemma moments in terms of E(y)}
\end{lemma}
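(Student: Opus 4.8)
The plan is to show that the first two moments of $E_q(e^y)$ converge to the corresponding moments of the limiting measure $\mu_q$, by combining the weak convergence from Lemma \ref{lemma limiting distribution} with a uniform integrability (tightness) estimate that lets us upgrade convergence against bounded Lipschitz test functions to convergence of the unbounded test functions $t$ and $t^2$. The key point is that weak convergence alone does not control moments; one needs an a priori bound showing that the tails of the distribution of $E_q(e^y)$ carry a negligible amount of second (indeed $(2+\delta)$-th) moment mass, uniformly in $Y$.

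First I would record the explicit-formula expression from Lemma \ref{lemma error term}, writing
$$ E_q(e^y) = \rho(q)-1+\sum_{\substack{\chi \bmod q \\ \chi^2=\chi_0 \\ \chi\neq\chi_0}}\sum_{\gamma_\chi}\frac{e^{iy\gamma_\chi}}{\rho_\chi}+o(1), $$
and view the sum over zeros as (the limit of) a sum of the form $\sum_n r_n e^{iy\theta_n}$ with $\sum_n |r_n|^2<\infty$ (this $\ell^2$ convergence is exactly what underlies the existence of the limiting distribution, via GRH so that $|\rho_\chi|\asymp|\gamma_\chi|$ and $\sum_{\gamma_\chi}|\gamma_\chi|^{-2}<\infty$). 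Next I would compute $\frac1Y\int_2^Y E_q(e^y)\,dy$ directly: the constant term $\rho(q)-1$ integrates trivially, and each oscillatory term $\frac1Y\int_2^Y e^{iy\gamma_\chi}\,dy = O(1/(Y|\gamma_\chi|))$ tends to $0$; summing over $\chi$ and over the zeros (using the $\ell^1$-type tail bound $\sum_{|\gamma_\chi|>T}|\gamma_\chi|^{-1}\cdot\frac1Y$, or truncating the zero sum at height $T=T(Y)\to\infty$ slowly and bounding the tail in $\ell^2$) shows the first moment tends to $\rho(q)-1$. Then one must identify $\rho(q)-1$ with $\int_{\mathbb R} t\,d\mu_q(t)$: this follows because the truncated random variable (finitely many zeros) has mean $\rho(q)-1$ exactly, and these truncations converge in distribution to $X_q$ while being uniformly bounded in $L^2$, hence their means converge to $\mathbb E[X_q]$. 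For the second moment, the same computation applied to $E_q(e^y)^2$ produces, after expanding the square, the diagonal contribution $(\rho(q)-1)^2 + \sum_{\chi}\sum_{\gamma_\chi}|\rho_\chi|^{-2}$ (using $\gamma$ and $-\gamma$ pairing and that $L(s,\chi)$ is real so zeros come in conjugate pairs) plus off-diagonal terms $\frac1Y\int_2^Y e^{iy(\gamma-\gamma')}\,dy$ which vanish in the limit when $\gamma\neq\gamma'$; here one must be slightly careful because LI is \emph{not} assumed, so distinct zeros $\gamma_\chi$, $\gamma_{\chi'}$ for different $\chi$ could coincide — but coincidences only \emph{add} to the diagonal and the identity with $\int t^2\,d\mu_q$ still holds because the same coincidence structure is built into $X_q$ via $Z(q)$.

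The main obstacle, and the step I would spend the most care on, is justifying the interchange of the $\lim_{Y\to\infty}$ with the infinite sum over zeros (equivalently, proving uniform integrability of $\{E_q(e^y)^2\}_{y}$ in $Y$). The clean way is a second-moment bound: show $\frac1Y\int_2^Y |E_q(e^y)|^{2}\,dy$ is bounded uniformly in $Y$ by $O\big((\rho(q)-1)^2 + \sum_\chi\sum_{\gamma_\chi}|\gamma_\chi|^{-2}\big)$ via the reasoning above, and in fact establish the analogous bound for $|E_q(e^y)|^{2+\delta}$ (or exploit that $E_q(e^y)$ is a uniform limit of almost periodic polynomials whose coefficients are absolutely summable in $\ell^2$) so that the $L^2$ mass cannot escape to infinity. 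Granting this uniform integrability, weak convergence (Lemma \ref{lemma limiting distribution}) plus a truncation of the test functions $t\mapsto t\mathbf{1}_{|t|\le M}$ and $t\mapsto t^2\mathbf{1}_{|t|\le M}$ — which are bounded and Lipschitz, hence covered — yields $\frac1Y\int_2^Y f(E_q(e^y))\,dy\to\int f\,d\mu_q$ for $f(t)=t$ and $f(t)=t^2$ upon letting $M\to\infty$, with the error from the truncation controlled uniformly in $Y$ by the tail mass. The $o(1)$ term in Lemma \ref{lemma error term} is harmless throughout: it contributes $o(1)$ to the first moment and, by Cauchy--Schwarz against the $L^2$-bounded main term, $o(1)$ to the second moment. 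This completes the plan.
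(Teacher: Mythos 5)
Your proposal matches the paper's proof in all essentials: weak convergence from Lemma \ref{lemma limiting distribution} combined with a uniform-integrability estimate coming from a higher-moment bound (the paper explicitly computes the \emph{fourth} moment via almost-periodicity, following Schlage-Puchta's method, which is one concrete realization of your ``$(2+\delta)$-th moment'' plan), then a truncation of the test function $t^2$ together with the Rubinstein--Sarnak exponential tail bound on $\mu_q$ to pass to the limit. The only small fix needed is that your truncated test functions $t\mapsto t^2\,\mathbf{1}_{\{|t|\le M\}}$ are not Lipschitz because of the jump at $\pm M$; the paper uses a continuous piecewise truncation $H_M$ for exactly this reason, and substituting that in is a cosmetic change to your argument.
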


\begin{proof}
We will only prove the second statement, as the first follows along the same lines.
Similarly as in \cite{Pu}, we can compute that
$$ \lim_{Y\rightarrow \infty} \frac 1Y \int_0^Y |E_q(e^y)|^4 dy = \sum_{\rho_1+\rho_2+\rho_3+\rho_4 =0} \frac 1{ \rho_1\rho_2\rho_3\rho_4 } <\infty, $$
where the last sum runs over quadruples of non-trivial zeros of quadratic Dirichlet $L$-functions modulo $q$. This implies that as $M\rightarrow \infty$,
\begin{equation}
\limsup_{Y\rightarrow \infty} \frac 1Y \int_{\substack{0\leq y \leq Y \\ |E_q(e^y)|>M }} |E_q(e^y)|^2 dy \longrightarrow 0.
\label{equation second moment}
\end{equation}  
Indeed, if this was not the case then we would have that for all $M>M_0$,
$$  \limsup_{Y\rightarrow \infty} \frac 1Y \int_{\substack{0\leq y \leq Y \\ |E_q(e^y)|>M }} |E_q(e^y)|^2 dy \geq \eta>0,  $$
and so
$$ \limsup_{Y\rightarrow \infty} \frac 1Y \int_{\substack{0\leq y \leq Y \\ |E_q(e^y)|>M }} |E_q(e^y)|^4 dy \geq \eta M^2, $$
which would contradict the fact that the fourth moment is finite. We now define the bounded Lipschitz function
$$ H_M(t):= \begin{cases}
t^2 &\text{ if } |t|\leq M \\
M^2(M+1-|t|) &\text{ if } M< |t| \leq M+1 \\
0 &\text{ if } |t|\geq M+1.
\end{cases} $$
We then have

\begin{multline*} \frac 1Y \int_2^{Y} E_q(e^y)^2 dy  = \frac 1Y \int_{ 2\leq y\leq Y } H_M(E_q(e^y)) dy -   \frac 1Y \int_{ \substack{2\leq y\leq Y \\ M<|E_q(e^y)| \leq M+1 }} H_M(E_q(e^y)) dy \\+  \frac 1Y \int_{ \substack{2\leq y\leq Y \\ |E_q(e^y)| >M}} E_q(e^y)^2 dy, 
\end{multline*}
therefore by \eqref{equation second moment} and by Lemma \ref{lemma limiting distribution} we get that
$$ \limsup_{ Y\rightarrow \infty} \frac 1Y \int_2^{Y} E_q(e^y)^2 dy =  \int_{ \mathbb R } H_M(t) d\mu_q(t) + \epsilon_M, $$
where $\epsilon_M$ tends to zero as $M\rightarrow \infty$. Using the bound
$$\mu_q( (-\infty,-M]\cup [M,\infty) ) \ll \exp(-c_2 \sqrt M) $$
(see Theorem 1.2 of \cite{RubSar}) we get by taking $M\rightarrow \infty$ that
$$ \limsup_{ Y\rightarrow \infty} \frac 1Y \int_2^{Y} E_q(e^y)^2 dy =  \int_{ \mathbb R } t^2 d\mu_q(t). $$
The same reasoning applies to the $\liminf$, and thus the proof is finished.

\end{proof}

The following calculation is similar to that of Schlage-Puchta \cite{Pu}, who computed the moments of $e^{-t/2}\psi(e^t;\chi)$.
\begin{lemma}
Assume GRH. Then,
$$ \E[X_q]= \rho(q) -1 +z(q), \hspace{1cm} \V[X_q] = \sum^*_{\gamma\neq 0} \frac{m_{\gamma}^2}{\frac 14+\gamma^2},$$
where the last sum runs over the imaginary parts of the non-trivial zeros of 
$$ Z_q(s) := \prod_{\substack{\chi^2=\chi_0 \\ \chi \neq \chi_0}} L(s,\chi),$$
$m_{\gamma}$ denotes the multiplicity of the zero $\frac 12+i\gamma$, the star meaning that we count the zeros without multiplicity, and $z(q)$ denotes the multiplicity of the (possible) real zero $\gamma=0$.
\label{lemma first two moments without LI}
\end{lemma}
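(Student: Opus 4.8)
The plan is to compute the mean and variance of $X_q$ by evaluating the limiting averages of $E_q(e^y)$ and $E_q(e^y)^2$, which by Lemma \ref{lemma moments in terms of E(y)} equal $\int_{\R} t\,d\mu_q(t)$ and $\int_{\R} t^2\,d\mu_q(t)$ respectively. The starting point is the explicit expression from Lemma \ref{lemma error term},
$$ E_q(e^y) = \rho(q)-1 + \sumchi \sum_{\gamma_\chi} \frac{e^{iy\gamma_\chi}}{\rho_\chi} + o(1), $$
where $\rho_\chi = \tfrac12 + i\gamma_\chi$ under GRH. First I would handle the mean: averaging $e^{iy\gamma}$ over $y \in [2,Y]$ gives $o(1)$ unless $\gamma = 0$, in which case it contributes $1/\rho_\chi = 1/\tfrac12 = 2$ per zero; but a zero at $\gamma=0$ (i.e. a real zero at $s=\tfrac12$) occurs only for $\chi$ with $L(\tfrac12,\chi)=0$, and grouping the contribution $2 \cdot (\text{number of such }\chi, \text{with multiplicity})$ — here one must be slightly careful about how the factor $1/\rho_\chi$ interacts with counting — I would define $z(q)$ to absorb exactly this term, giving $\E[X_q] = \rho(q)-1+z(q)$. (The cleanest bookkeeping is to write the sum over $\chi$ and zeros as a sum over zeros of $Z_q(s)$ counted with multiplicity $m_\gamma$, so the $\gamma=0$ term is $2 m_0 = z(q)$ after noting $1/\rho = 2$; I would state this carefully since the star/non-star convention is what the lemma is really about.)

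For the variance I would square the zero-sum, $\left(\sumchi \sum_{\gamma_\chi} e^{iy\gamma_\chi}/\rho_\chi\right)^2$, expand into a double sum over pairs of zeros $(\rho_1,\rho_2)$ of $Z_q$ (counted with multiplicity), and take the limiting average in $y$. The oscillatory factor is $e^{iy(\gamma_1+\gamma_2)}$, whose average is $1$ when $\gamma_1 = -\gamma_2$ and $o(1)$ otherwise. Since the zeros of $Z_q$ (a product of real Dirichlet $L$-functions) are symmetric about the real axis, for each zero $\tfrac12+i\gamma$ the conjugate $\tfrac12-i\gamma$ is also a zero with the same multiplicity $m_\gamma$. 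The diagonal terms $\gamma_1 = -\gamma_2$ thus contribute
$$ \sum_{\gamma} \frac{m_\gamma^2}{(\tfrac12+i\gamma)(\tfrac12-i\gamma)} = \sum_\gamma \frac{m_\gamma^2}{\tfrac14+\gamma^2}, $$
where the sum is over distinct imaginary parts (the $m_\gamma^2$ accounts for the $m_\gamma \times m_\gamma$ pairings). Separating off the $\gamma=0$ term, which equals $4 m_0^2 = z(q)^2$, one recovers $\E[X_q^2] = (\rho(q)-1+z(q))^2 + \sum^*_{\gamma\neq 0} m_\gamma^2/(\tfrac14+\gamma^2)$, hence the claimed formula for $\V[X_q] = \E[X_q^2]-\E[X_q]^2$.

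The main technical obstacle is justifying the interchange of the limit in $Y$ with the (infinite) sum over pairs of zeros, i.e. controlling the off-diagonal tail uniformly. This is exactly the kind of issue already confronted in the proof of Lemma \ref{lemma moments in terms of E(y)}: the key input is the finiteness of $\sum_{\rho_1+\rho_2=0}1/(\rho_1\rho_2)$ together with the almost-periodicity / fourth-moment bound, which let one truncate the sum over zeros at height $T$, average the finite trigonometric sum exactly, and send $T\to\infty$ with an error controlled by $\sum_{|\gamma|>T} m_\gamma / (\tfrac14+\gamma^2)$ and its square. I would invoke the computation of Schlage-Puchta \cite{Pu} (cited just before the lemma) essentially verbatim for this step, noting only the cosmetic change that here we carry the extra rational weights $1/\rho_\chi$ and sum over the family of quadratic characters rather than a single $\chi$. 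The only genuinely new point to check is that the non-diagonal contribution cannot conspire to be large — but under GRH the zeros lie on $\Re s = \tfrac12$ so $|x^{\rho_\chi}/\rho_\chi| = |\rho_\chi|^{-1}$, and convergence of $\sum |\rho_\chi|^{-2}$ (with multiplicity, over the finitely many $\chi$) makes the tail estimate routine.
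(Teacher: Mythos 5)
Your argument is correct, and for the mean it coincides with the paper's: integrate $E_q(e^y)$ over $[2,Y]$, observe that the oscillatory terms $\gamma_\chi\neq 0$ contribute $O_q(1)$ by absolute convergence, and read off the constant term. For the variance, you expand $E_q(e^y)^2$ into a double sum over zeros, isolate the diagonal $\gamma_1=-\gamma_2$, and justify the exchange of limit and sum via the fourth-moment/truncation argument of Schlage-Puchta. This is precisely the route the paper calls the \emph{alternative} in its parenthetical ``(An alternative way to compute the variance is to argue as in \cite{Pu}.)''; the paper's primary route is to quote Parseval's identity for $B^2$ almost-periodic functions, which packages the same diagonal/off-diagonal analysis into a single citation. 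The two are mathematically equivalent — what the Parseval route buys is concision, while your explicit expansion makes transparent exactly where the $m_\gamma^2$ (rather than $m_\gamma$) arises and why the $\gamma=0$ piece combines with the cross term to reconstruct $\E[X_q]^2$.

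One point you flag deserves emphasis: the $\gamma_\chi=0$ contribution to the constant term of $E_q(e^y)$ is $\sumchi\sum_{\gamma_\chi=0}1/\rho_\chi = 2m_0$, where $m_0$ is the total multiplicity of the zero of $Z_q$ at $s=\tfrac12$, because $1/\rho=2$ on the critical line. You consistently work with $z(q)=2m_0$ so that both the mean and the variance identities close up (the diagonal $\gamma=0$ term is $4m_0^2=z(q)^2$, and the cross term is $2(\rho(q)-1)\cdot 2m_0=2(\rho(q)-1)z(q)$, so $\E[X_q^2]-\E[X_q]^2$ cancels the real-zero contribution cleanly). The lemma as stated defines $z(q)$ to be the multiplicity $m_0$ itself, which would leave a stray factor of $2$; your reading is the one that makes the computation consistent, and it is worth stating explicitly rather than leaving implicit.

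Minor: in your variance paragraph you write $\E[X_q^2]=(\rho(q)-1+z(q))^2+\sum^*_{\gamma\neq0}m_\gamma^2/(\tfrac14+\gamma^2)$, but this also relies on the cross term between the constant $\rho(q)-1$ and the zero-sum producing exactly $2(\rho(q)-1)z(q)$; you should spell that out, since it is where the $\gamma=0$ mean and variance contributions are tied together. The remaining analytic justification (truncation in $T$, finiteness of $\sum_{\rho_1+\rho_2=0}1/(\rho_1\rho_2)$) is exactly what Lemma \ref{lemma moments in terms of E(y)} was set up to provide, so invoking it as you do is appropriate.
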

\begin{proof}
By Lemma \ref{lemma error term} we have that 
\begin{align*}\int_2^{Y} E_q(e^y) dy &= (\rho(q)-1+z(q))(Y-2) + \sumchi \sum_{ \gamma_{\chi} \neq 0} \frac 1{ \frac 12+i\gamma_{\chi}} \int_2^{Y} e^{i\gamma_{\chi} y} dy +o_{Y\rightarrow \infty}(Y)  \\
&= (\rho(q)-1+z(q))(Y-2) + O_q\left(  1 \right)+o_{Y\rightarrow \infty}(Y),
\end{align*}
by absolute convergence.
Taking $Y \rightarrow \infty$ and applying Lemma \ref{lemma moments in terms of E(y)} gives that
$$\E[X_q] = \lim_{Y\rightarrow \infty} \frac 1Y \int_2^{Y} E_q(e^y) dy= \rho(q)-1+z(q). $$

The calculation of the variance follows from Lemma \ref{lemma error term} and from Parseval's identity for $B^2$ almost-periodic functions \cite{Be1}. (An alternative way to compute the variance is to argue as in \cite{Pu}.)
\end{proof}

\begin{remark}
It is a general fact that Besicovitch almost-periodic functions always have a mean value \cite{Be2}. Moreover, Parseval's identity \cite{Be1,Be2} shows that Besicovitch $B^2$ (and thus also Stepanov $S^2$, Weil $W^2$ and Bohr) 
almost periodic functions $f(y)$ have a second moment given by 
$$  \lim_{Y \rightarrow \infty } \frac 1Y \int_0^Y f(y)^2 dy = \sum_{n\geq 1} A_n^2, $$
where the $A_n$ are the Fourier coefficients of $f$. 
\end{remark}

\begin{lemma}
Assume GRH. If $$B(q):=\frac{\E[X_q]}{\sqrt{\V[X_q]}}$$
is large enough, then 
$$  \underline{\delta}(q;NR,R) \geq 1-2\frac{\V[X_q]}{\E[X_q]^2} . $$
\label{lemma bias factor chebyshev}
\end{lemma}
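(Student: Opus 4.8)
The plan is to recognize the race as a sign condition on the normalized error term $E_q$ and then run a second–moment (Chebyshev) argument, the key point being that the mean and second moment of $E_q(e^y)$ have already been pinned down in Lemmas~\ref{lemma moments in terms of E(y)} and \ref{lemma first two moments without LI}. First I would record that, by the observation preceding Lemma~\ref{lemma error term}, the inequality defining $\delta(q;NR,R)$ is equivalent to $\pi(x;q,NR)>(\rho(q)-1)\pi(x;q,R)$, hence to $E_q(x)>0$ since $\sqrt x/\log x>0$; writing $x=e^y$ this gives
$$\underline{\delta}(q;NR,R)=1-\limsup_{Y\to\infty}\frac1Y\,\mathrm{meas}\{2\le y\le Y:\ E_q(e^y)\le 0\}.$$
Moreover $\E[X_q]=\rho(q)-1+z(q)\ge 1>0$ by Lemma~\ref{lemma first two moments without LI}, so the hypothesis that $B(q)$ is large is used only to render the conclusion non-trivial (when $B(q)^2<2$ the bound holds vacuously).

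Then comes the Chebyshev inclusion. Since $\E[X_q]>0$, the event $E_q(e^y)\le 0$ forces $\bigl(E_q(e^y)-\E[X_q]\bigr)^2\ge \E[X_q]^2$, so by Markov's inequality
$$\mathrm{meas}\{2\le y\le Y:\ E_q(e^y)\le 0\}\ \le\ \frac1{\E[X_q]^2}\int_2^Y\bigl(E_q(e^y)-\E[X_q]\bigr)^2\,dy.$$
Finally I would evaluate the right-hand side: expanding the square and invoking both statements of Lemma~\ref{lemma moments in terms of E(y)}, together with $\int_{\R}t\,d\mu_q=\E[X_q]$ and $\int_{\R}t^2\,d\mu_q=\E[X_q^2]=\V[X_q]+\E[X_q]^2$ from Lemma~\ref{lemma first two moments without LI}, yields $\lim_{Y\to\infty}\frac1Y\int_2^Y(E_q(e^y)-\E[X_q])^2\,dy=\V[X_q]$. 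Dividing the preceding display by $Y$, taking $\limsup$ as $Y\to\infty$, and combining with the first display gives $\underline{\delta}(q;NR,R)\ge 1-\V[X_q]/\E[X_q]^2$, which a fortiori is the stated bound (the factor $2$ simply leaves slack).

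The computation itself is routine, and the only delicate point is exactly the one already handled upstream: the indicator $\mathbf 1_{(-\infty,0]}$ is not Lipschitz, so it cannot be inserted directly into the limiting-distribution statement of Lemma~\ref{lemma limiting distribution}; applying Chebyshev directly to $\int_2^Y(E_q(e^y)-\E[X_q])^2\,dy$ circumvents this, at the cost of needing the genuine $L^2$ convergence underlying Lemma~\ref{lemma moments in terms of E(y)} (which there rests on the finiteness of the fourth moment). An equivalent route to the same conclusion is to approximate $\mathbf 1_{(-\infty,0]}$ from above by Lipschitz functions $f_\varepsilon\downarrow \mathbf 1_{(-\infty,0]}$ and use Lemma~\ref{lemma limiting distribution} to get $\limsup_Y\frac1Y\,\mathrm{meas}\{y\le Y:\ E_q(e^y)\le 0\}\le \mu_q((-\infty,0])=\P[X_q\le 0]$, then bound $\P[X_q\le 0]\le \P[|X_q-\E[X_q]|\ge \E[X_q]]\le \V[X_q]/\E[X_q]^2$ by Chebyshev; I would likely present the first, slightly cleaner, version.
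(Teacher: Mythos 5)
Your proof is correct, and it is a variant of the paper's argument that is in fact slightly sharper. The paper's proof applies Chebyshev's inequality at the level of the limiting measure $\mu_q$, after first transferring from the time average to $\mu_q$ via a fixed ramp function $f$ with $f\le H$; since $f$ and $H$ disagree on $[0,1]$, one only controls $\mu_q(-\infty,1]$ rather than $\mu_q(-\infty,0]$, and the resulting slack is what forces the hypothesis that $B(q)$, hence $\E[X_q]$, be large enough (the paper takes $\E[X_q]\ge 4$) in order to absorb $(\E[X_q]-1)^{-2}\le 2\E[X_q]^{-2}$. Your route instead applies the Markov inequality pointwise in $y$ to the integrand $\mathbf 1_{\{E_q(e^y)\le 0\}}\le (E_q(e^y)-\E[X_q])^2/\E[X_q]^2$, integrates, and then invokes the $L^2$ convergence of Lemma \ref{lemma moments in terms of E(y)} (together with Lemma \ref{lemma first two moments without LI}) to evaluate the limit as $\V[X_q]$; this bypasses the Lipschitz approximation and yields the stronger bound $\underline{\delta}\ge 1-\V[X_q]/\E[X_q]^2$, which makes the hypothesis on $B(q)$ merely a non-triviality condition rather than an ingredient of the proof. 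Your alternative suggestion (a decreasing sequence $f_\varepsilon\downarrow\mathbf 1_{(-\infty,0]}$ combined with Lemma \ref{lemma limiting distribution}) likewise removes the slack and is essentially the paper's mechanism but optimized; either version is a clean improvement, and both rely on the same GRH-dependent second-moment machinery the paper already develops.
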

\begin{proof}
It is clear from Lemma \ref{lemma first two moments without LI} and the Riemann-von Mangoldt formula that 
$\V[X_q] \gg \log q'$, and therefore our assumption that $B(q)$ is large enough implies that $\E[X_q]$ is also large enough, say at least $4$. 
Define
$$H(x):= \begin{cases} 0  &\text{ if } x <0 \\ 1 &\text{ if } x\geq 0
\end{cases} \hspace{1cm}
f(x):= \begin{cases} 0 &\text{ if } x <0 \\ 
x &\text{ if } 0\leq x < 1 \\
1 &\text{ if } x\geq 0. 
\end{cases}
 $$ 
Clearly, $f(x)$ is bounded Lipschitz continuous and $f(x)\leq H(x)$. Therefore,

$$ \underline{\delta}(q;NR,R) = \liminf_{Y\rightarrow \infty} \frac 1Y \int_2^Y H(E_q(e^y)) dy \geq  \liminf_{Y\rightarrow \infty} \frac 1Y \int_2^Y f(E_q(e^y)) dy, $$
which by Lemma \ref{lemma limiting distribution} is equal to
\begin{align*} \int_{\mathbb R} f(t) d\mu_q(t)&= 1-\int_{\mathbb R} (1-f(t)) d\mu_q(t)\\
&=1-\int_{-\infty}^{1} (1-f(t)) d\mu_q(t)\geq 1-\mu_q(-\infty,1]. 
\end{align*}

We now apply Chebyshev's inequality:
\begin{multline*}
\mu_q(-\infty,1] = \P[X_q \leq 1]  = \P[X_q-\E[X_q] \leq 1-\E[X_q]]\\ \leq \P[|X_q-\E[X_q]| \geq \E[X_q]-1] 
 \leq \frac{\V[X_q]}{(\E[X_q]-1)^2} \leq 2\frac{\V[X_q]}{\E[X_q]^2} 
\end{multline*}
since $\E[X_q]\geq 4$, and therefore 
$$\underline{\delta}(q;NR,R) \geq 1-2\frac{\V[X_q]}{\E[X_q]^2}. $$
\end{proof}

\begin{proof}[Proof of Theorem \ref{theorem extreme Dirichlet without LI}]
By Lemma \ref{lemma first two moments without LI}, our hypothesis implies that for the sequence of moduli $q$ under consideration, 
$$ \V[X_q] \leq \max_{\gamma}(m_{\gamma})  \sum_{\gamma}^* \frac{m_{\gamma}}{\frac 14+\gamma_{\chi}^2}  = o\left( \frac{\rho(q)}{\log q} \rho(q) \log q \right) = o(\rho(q)^2), $$
by the Riemann von-Mangoldt formula. Lemma \ref{lemma first two moments without LI} also implies that $\E[X_q] \gg \rho(q)$, and hence Lemma \ref{lemma bias factor chebyshev} implies that 
$$ \underline{\delta}(q;NR,R) \geq 1-o(1).$$
The last inequality to show, that is $ \overline{\delta}(q;NR,R) <1$, follows from an analysis using the functions $f(x)$ and $H(x)$ of Lemma \ref{lemma bias factor chebyshev}, combined with a lower bound on $\mu_E(-\infty,-1]$ similar to that in Theorem 1.2 of \cite{RubSar}, which holds in greater generality \cite{NgSh}.

\end{proof}

\section{A central limit theorem}
The goal of this section is to show a central limit theorem under GRH and LI, from which the second part of Theorem \ref{theorem extreme Dirichlet races} will follow. 
We first translate our problem to questions on sums of independent random variables, which can be done thanks to hypothesis LI. Recall that we are interested in the set of $n$ such that
$$ \pi(n;q,NR)>(\rho(q)-1)\pi(n;q,R).$$

\begin{lemma}
\label{lemma link with random variable X_q}
Assume GRH and LI. Then the logarithmic density of the set of $n$ for which
$ \pi(n;q,NR)>(\rho(q)-1)\pi(n;q,R)$ exists and equals 
$$ \P[X_q]>0,$$
where $X_q$ is the random variable defined in Section \ref{section without LI}. Moreover we have
\begin{equation}
X_q\sim\rho(q)-1+\sum_{\substack{ \chi\bmod q \\ \chi^2=\chi_0 \\ \chi\neq \chi_0}} \sum_{\gamma_{\chi}>0} \frac{2\Re(Z_{\gamma_{\chi}})}{\sqrt{\frac 14+\gamma_{\chi}^2}},
\label{equation residues versus non residues random variable}
\end{equation}
where the $Z_{\gamma_{\chi}}$ are independent identically distributed random variables following a uniform distribution on the unit circle in $\mathbb C$.

\end{lemma}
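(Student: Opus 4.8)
The plan is to deduce Lemma~\ref{lemma link with random variable X_q} from the explicit formula of Lemma~\ref{lemma error term} together with the Rubinstein--Sarnak machinery, using LI to replace the deterministic oscillating sum by a sum of independent random variables. First I would recall from Lemma~\ref{lemma error term} that, under GRH,
$$ E_q(x) = \rho(q)-1+\sumchi \sum_{\gamma_{\chi}} \frac{x^{i\gamma_{\chi}}}{\rho_{\chi}}+o(1), $$
and pair each zero $\tfrac12+i\gamma_\chi$ on the critical line (GRH) with its conjugate $\tfrac12-i\gamma_\chi$, which is also a zero since $\chi$ is real-valued. Writing $x=e^y$ and $\rho_\chi=\tfrac12+i\gamma_\chi$, this groups the sum into $\sumchi \sum_{\gamma_\chi>0} 2\Re\!\big(e^{i\gamma_\chi y}/(\tfrac12+i\gamma_\chi)\big)$, plus the contribution of a possible zero at $\gamma_\chi=0$, which by the orthogonality relation \eqref{equation orthogonality relations residues non residues} is already absorbed into the constant term (its multiplicity is the $z(q)$ of Lemma~\ref{lemma first two moments without LI}, though in the LI formulation we may assume no such zero or carry it along). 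A routine manipulation of the complex phase, $e^{i\gamma_\chi y}/(\tfrac12+i\gamma_\chi)$ has modulus $(\tfrac14+\gamma_\chi^2)^{-1/2}$, lets me write the oscillating term as $\sumchi\sum_{\gamma_\chi>0} 2\Re(e^{i\theta_{\gamma_\chi}(y)})/\sqrt{\tfrac14+\gamma_\chi^2}$ for suitable phases $\theta_{\gamma_\chi}(y)$.

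Next I would invoke Lemma~\ref{lemma limiting distribution}: $E_q(e^y)$ has a limiting logarithmic distribution $\mu_q$, with associated random variable $X_q$. The point of LI is the classical Kronecker--Weyl equidistribution argument of Rubinstein and Sarnak: because the positive ordinates $\{\gamma_\chi\}$ (over all real primitive $\chi \bmod q$, hence over all the $L(s,\chi)$ dividing $Z_q$) are linearly independent over $\mathbb{Q}$, the vector $(\gamma_\chi y \bmod 2\pi)_{\gamma_\chi>0}$ becomes equidistributed on the infinite-dimensional torus as $y$ ranges over $[0,Y]$, $Y\to\infty$. Consequently the phases $e^{i\gamma_\chi y}$ converge jointly in distribution to independent random variables $Z_{\gamma_\chi}$ uniform on the unit circle, and one gets
$$ X_q \sim \rho(q)-1+\sumchi\sum_{\gamma_\chi>0} \frac{2\Re(Z_{\gamma_\chi})}{\sqrt{\tfrac14+\gamma_\chi^2}}, $$
which is \eqref{equation residues versus non residues random variable}. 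The convergence of the series defining $X_q$ (a.s.\ and in $L^2$) follows from $\sum_{\gamma_\chi} (\tfrac14+\gamma_\chi^2)^{-1} < \infty$ by the Riemann--von Mangoldt formula, exactly as in the variance computation of Lemma~\ref{lemma first two moments without LI}.

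For the density statement, I would apply the $B^2$-almost-periodicity of $E_q(e^y)$ (used already in Lemma~\ref{lemma first two moments without LI}) to conclude that $\mu_q$ has no atoms except possibly at points forced by degeneracies; since LI rules out such degeneracies, $\mu_q(\{0\})=0$, so the indicator $H$ of $(0,\infty)$ is $\mu_q$-a.e.\ continuous. Then, running the argument of Lemma~\ref{lemma bias factor chebyshev} (sandwiching $H$ between bounded Lipschitz functions from below and from above) shows that $\tfrac1Y\int_2^Y H(E_q(e^y))\,dy$ converges to $\int_{\mathbb{R}} H\,d\mu_q = \mathbb{P}[X_q>0]$; that is, the logarithmic density exists and equals $\mathbb{P}[X_q>0]$. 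The main obstacle, and the place where one must be careful rather than wave hands, is justifying the passage from the finite-dimensional Kronecker--Weyl statement to the full (infinite sum) random variable $X_q$ — i.e.\ controlling the tail $\sum_{\gamma_\chi > T}$ uniformly in $y$ so that truncation commutes with the limit $Y\to\infty$ — but this is precisely what the $B^2$-almost-periodicity (equivalently, the absolute convergence in mean square coming from $\sum (\tfrac14+\gamma_\chi^2)^{-1}<\infty$) provides, so it reduces to citing \cite{RubSar} and \cite{NgSh} as the statement of the lemma already does.
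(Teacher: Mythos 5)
Your proof is correct and follows essentially the same route as the paper's, which is far terser: it simply quotes Lemma~\ref{lemma error term}, notes that LI rules out real zeros, and cites Rubinstein--Sarnak for the existence and continuity of the limiting distribution and Proposition~2.3 of \cite{FiMa} for the random-variable identity \eqref{equation residues versus non residues random variable}, whereas you usefully unpack what those citations provide (conjugate-pair rewriting, Kronecker--Weyl equidistribution, and $B^2$ control of the truncation). One small misstatement: a real zero at $\gamma_\chi=0$ is not ``absorbed by the orthogonality relation'' into $\rho(q)-1$ --- the constant $\rho(q)-1$ comes from the prime squares, and a real zero would contribute a separate $z(q)$ to the mean as in Lemma~\ref{lemma first two moments without LI} --- but this is harmless here since, as you note, LI forces $z(q)=0$.
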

\begin{proof}

By Lemma \ref{lemma error term}, we have that
 $$ \frac{\pi(x;q,NR)-(\rho(q)-1)\pi(x;q,R)}{\sqrt x/\log x} =\rho(q)-1+\sum_{\substack{ \chi\bmod q \\ \chi^2=\chi_0 \\ \chi\neq \chi_0}} \sum_{\gamma_{\chi}}  \frac{x^{i\gamma_{\chi}}}{\rho_{\chi}}+o(1), $$
since LI implies that there are no real zeros.
 It follows by the work of Rubinstein and Sarnak that $\delta(q;NR,R)$ exists and equals $\text{Prob}[X_q>0]$ (their analysis shows that the distribution function of $X_q$ is continuous). Moreover, an argument similar to the proof of Proposition 2.3 of \cite{FiMa} shows that \eqref{equation residues versus non residues random variable} holds.
\end{proof}

One can show that the random variables appearing in \eqref{equation residues versus non residues random variable} have variance $\V[\Re(Z_{\gamma_{\chi}})]=\frac 12$, and have mean $\E[Z_{\gamma_{\chi}}]=0$. Using this and the fact that they are mutually independent, we recover Lemma \ref{lemma first two moments without LI}:
\begin{equation}
\mathbb E[X_q]=\rho(q)-1, \hspace{1cm} \text{Var}[X_q] = \sum_{\substack{ \chi\bmod q \\ \chi^2=\chi_0 \\ \chi\neq \chi_0}} \sum_{\gamma_{\chi}} \frac{1}{\frac 14+\gamma_{\chi}^2}, 
\label{equation first two moments}
\end{equation} 
since the zeros come in conjugate pairs ($\chi$ is real).
We will see in the following lemma that $\text{Var}[X_q] \asymp \rho(q) \log q'$ (recall that $q':=\prod_{p\mid q}p$), and this is a crucial fact in our analysis. 

\begin{lemma}
\label{lemma variance of X_q}
Assume GRH and let $X_q$ be the random variable defined in \eqref{equation residues versus non residues random variable}. We have that
$$ \V[X_q] = 2^{\omega(q)-1-\epsilon_q} \log q' \left[ 1+O\left( \frac{\log\log q'}{\log q'}\right)\right],$$
where $\epsilon_q=1$ if $2\mid q$, and $\epsilon_q=0$ otherwise. In particular, 
$$ \V[X_q] \asymp \rho(q) \log q'.$$
\end{lemma}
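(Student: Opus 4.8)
The plan is to start from the exact formula for $\V[X_q]$ given in \eqref{equation first two moments}, namely
$$ \V[X_q] = \sumchi \sum_{\gamma_\chi} \frac{1}{\tfrac14+\gamma_\chi^2}, $$
and to evaluate the inner sum over zeros of each individual $L(s,\chi)$ for $\chi$ real and primitive, then count how many such $\chi$ there are. For a fixed primitive real $\chi \bmod q$, the classical formula (coming from the logarithmic derivative of the completed $L$-function, i.e.\ $\frac{L'}{L}(s,\chi)$ evaluated at $s=1$, together with the functional equation) gives
$$ \sum_{\gamma_\chi} \frac{1}{\tfrac14+\gamma_\chi^2} = \log \mathfrak{q}_\chi + O(1), $$
where $\mathfrak{q}_\chi$ is the conductor of $\chi$; this is the partial-fraction expansion of $\frac{\xi'}{\xi}$ evaluated at $s=1$, which under GRH is $\Re$ of a sum over zeros $\sum_\rho \frac{1}{1-\rho} = \sum_\gamma \frac{1/2}{1/4+\gamma^2}$. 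So the first step is to record this per-character evaluation with an explicit, uniform error term.

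Next I would organize the characters. The real primitive characters $\bmod q$ correspond to the quadratic characters, i.e.\ to fundamental-type decompositions; by Remark \ref{remark consider squarefree moduli} we may assume $q = 2^m \ell$ with $\ell$ odd and squarefree and $m \le 3$, and the real primitive characters modulo $q$ are indexed by divisors in a controlled way. The key counting facts are: the number of $\chi$ with $\chi^2=\chi_0$, $\chi\ne\chi_0$ is $\rho(q)-1$, and — this is the point — almost all of these characters have conductor of size comparable to $q'=\prod_{p\mid q}p$, or more precisely $\sum_{\chi} \log\mathfrak{q}_\chi$ is dominated by the generic term. Writing $q' = \prod_{p\mid q} p$ and noting each real primitive $\chi \bmod q$ is $\prod_{p \in S}\chi_p$ for $S$ ranging over (nonempty, with the even-prime caveat) subsets of the prime divisors, one gets
$$ \sum_{\chi^2=\chi_0,\ \chi\ne\chi_0} \log \mathfrak{q}_\chi = \sum_{\emptyset \ne S \subseteq \{p : p\mid q\}} \Big(\sum_{p\in S}\log p\Big)(1+\text{2-adic corrections}) = 2^{\omega(q)-1} \sum_{p\mid q}\log p + (\text{lower order}), $$
because each prime $p\mid q$ appears in exactly half of the subsets $S$. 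Combining this with the per-character evaluation, and absorbing the $O(1)$-per-character errors (there are $\rho(q)-1 \asymp 2^{\omega(q)}$ of them, which is $O(2^{\omega(q)})$, i.e.\ $O(\rho(q))$, genuinely lower order than $\rho(q)\log q'$ provided $\log q' \to \infty$), yields
$$ \V[X_q] = 2^{\omega(q)-1-\epsilon_q} \log q'\Big(1 + O\Big(\tfrac{\log\log q'}{\log q'}\Big)\Big), $$
the $\epsilon_q$ accounting for the factor $2$ lost when $2\mid q$ (fewer real primitive characters and the modified $\rho(q)$ formula). Finally $\rho(q) \asymp 2^{\omega(q)}$ in all cases of the definition of $\rho(q)$, so $\V[X_q] \asymp \rho(q)\log q'$.

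The main obstacle I anticipate is bookkeeping the conductors of the real primitive characters carefully enough to get the claimed secondary term $O(\log\log q'/\log q')$ rather than something weaker: one must check that the characters of small conductor (there are at most $O(2^{\omega(q)})$ of them but with widely varying conductors) contribute an error of size $O(2^{\omega(q)}\log\log q')$ at worst, which requires a mild combinatorial estimate on $\sum_{S}\big(\log q' - \sum_{p\in S}\log p\big)$ or equivalently controlling how often $S$ misses the large primes. A secondary subtlety is the $2$-adic case ($m=1,2,3$): one must verify that the set of real primitive characters $\bmod\, 2^m\ell$ and their conductors behave as the $\epsilon_q$ exponent predicts, using that there are no real primitive characters mod $2^e$ for $e \ge 4$ and exactly the right small number for $e \le 3$. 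Neither step is deep, but both need to be done cleanly; the per-character formula $\sum_\gamma (1/4+\gamma^2)^{-1} = \log\mathfrak q_\chi + O(1)$ under GRH is standard and I would simply cite it or derive it in one line from the Hadamard product.
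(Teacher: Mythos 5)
Your approach is the same as the paper's: express $\V[X_q]$ as a sum over non-principal real characters of $\sum_{\gamma_\chi}(\tfrac14+\gamma_\chi^2)^{-1}$, evaluate each inner sum via the Hadamard product in terms of the conductor $q^*$, and then count the conductors by reducing to $q=2^m\ell$ with $\ell$ odd squarefree. However, there is a genuine gap in the per-character estimate, and a secondary misattribution of where the $\log\log$ error comes from.

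The per-character formula is not $\sum_{\gamma_\chi}(\tfrac14+\gamma_\chi^2)^{-1}=\log q^*+O(1)$. Working out the Hadamard product gives
$$\sum_{\gamma_\chi}\frac{1}{\tfrac14+\gamma_\chi^2}=\log q^*+\frac{\Gamma'}{\Gamma}\Bigl(\frac{1+\mathfrak a}{2}\Bigr)-\log\pi+2\Re\frac{L'}{L}(1,\chi^*),$$
and the term $2\Re\frac{L'}{L}(1,\chi^*)$ is \emph{not} bounded: under GRH it is $O(\log\log q^*)$ by Littlewood's conditional bound, and it really is as large as $\log\log q^*$ for infinitely many quadratic characters, so $O(1)$ is false. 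This is precisely the source of the $O(\log\log q'/\log q')$ relative error in the lemma. Your own chain of reasoning, taken literally, would give $\V[X_q]=2^{\omega(q)-1-\epsilon_q}\log q'(1+O(1/\log q'))$, which is stronger than the statement and not achievable; the fact that you wrote down the correct final error term anyway signals the inconsistency rather than resolving it.

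Your anticipated obstacle is also misplaced. You worry that ``characters of small conductor'' might contribute $O(2^{\omega(q)}\log\log q')$ and that this requires a combinatorial estimate on $\sum_S(\log q'-\sum_{p\in S}\log p)$. In fact the conductor sum is \emph{exact}: for $q$ odd squarefree, the real primitive characters are in bijection with divisors $d\mid q$, and
$$\sum_{\substack{\chi^2=\chi_0\\ \chi\neq\chi_0}}\log q^*=\sum_{d\mid q}\log d=2^{\omega(q)-1}\sum_{p\mid q}\log p=2^{\omega(q)-1}\log q,$$
with no error term at all; the $2$-adic cases $2\parallel q$, $4\parallel q$, $8\parallel q$ each differ from $2^{\omega(q)-1-\epsilon_q}\log q'$ by an explicit bounded constant times $2^{\omega(q)}$. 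So the bookkeeping of conductors needs no ``lower order'' fudge. The error you need to track is the one you dismissed: the per-character $L'/L(1,\chi^*)$ term, summed over the $\rho(q)-1\asymp 2^{\omega(q)}$ characters, contributes $O(2^{\omega(q)}\log\log q')$, and dividing by the main term $2^{\omega(q)-1-\epsilon_q}\log q'$ gives exactly the stated $O(\log\log q'/\log q')$.
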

\begin{proof}
By Remark \ref{remark consider squarefree moduli}, we have that 
$$ \V[X_q]=\V[X_{2^e\ell}], $$
where $e\leq 3$, $2^e \parallel q$ and $\ell := \prod_{\substack{p\mid q \\ p\neq 2}}p$. Therefore we assume from now on (without loss of generality) that $q=2^e\ell$, with $e\leq 3$ and $\ell$ an odd squarefree integer.

 Lemma 3.5 of \cite{FiMa} gives that
\begin{align}
\label{equation sum over zeros B(chi)}
\begin{split}\sum_{\gamma_{\chi}} \frac{1}{\frac 14+\gamma_{\chi}^2} &= \log q^* -\log \pi -\gamma -(1+\chi(-1))\log 2 +2\Re \frac{L'}L(1,\chi^*) \\
&= \log q^* + O(\log\log q^*), 
\end{split}
\end{align}
by Littlewood's GRH bound on $\frac{L'}L(1,\chi)$. Plugging this into \eqref{equation first two moments} we get
$$ \V[X_q]=\sum_{\substack{ \chi\bmod q \\ \chi^2=\chi_0 }} \log q^* + O(2^{\omega(q)}\log\log q).$$
If $q$ is odd, then there is exactly one primitive real character $\bmod d$ for every $d\mid q$, hence
$$\sum_{\substack{ \chi\bmod q \\ \chi^2=\chi_0 }} \log q^* = \sum_{d\mid q} \log d = \sum_{d\mid q} \sum_{p\mid d} \log p= \sum_{p\mid q} \log p 2^{\omega(q)-1} =  2^{\omega(q)-1} \log q. $$ 
If $2\parallel q$, then there are no primitive characters modulo even divisors of $q$, so
$$\sum_{\substack{ \chi\bmod q \\ \chi^2=\chi_0 }} \log q^* = \sum_{d\mid \frac q2 } \log d  =  2^{\omega(q)-2} \log \frac q2. $$ 
If $4\parallel q$, then there is exactly one primitive real character modulo divisors which are a multiple of $4$, so
$$\sum_{\substack{ \chi\bmod q \\ \chi^2=\chi_0 }} \log q^* = \sum_{d\mid \frac q4 } \log d +\sum_{4\mid d\mid  q } \log d = 2^{\omega(q)-2}\log (2q). $$
If $8\parallel q$, then there are exactly two primitive real characters modulo divisors which are a multiple of $8$, so
$$\sum_{\substack{ \chi\bmod q \\ \chi^2=\chi_0 }} \log q^* = \sum_{d\mid \frac q8 } \log d +\sum_{\substack{4\mid d\mid  q \\ 8\nmid d }} \log d+2\sum_{8\mid d\mid  q } \log d = 2^{\omega(q)-2}\log (8q). $$
\end{proof}

Let $X_q$ be the random variable defined in \eqref{equation residues versus non residues random variable}, and define
\begin{equation*}B(q):= \frac{\mathbb E[X_q]}{\sqrt{\text{Var}[X_q]}} . \label{definition of K_q}
\end{equation*}
It is $B(q)$ which dictates the behaviour of the race we are considering: if $B(q)$ is small, then the race will not be very biased, whereas if $B(q)$ is large, then the race will have a significant bias. By Lemma \ref{lemma variance of X_q}, we have the estimate
\begin{equation}
B(q)= \sqrt{\frac{2^{\omega(q)+1+\epsilon_q}}{\log q'}}\left[1+O\left(2^{-\omega(q)}+\frac{\log \log q'}{\log q'}\right)\right].
\label{equation computation of B(q)}
\end{equation}  

To prove the second part of Theorem \ref{theorem extreme Dirichlet races} we will need a sequence of moduli for which $B(q)$ is very regular.

\begin{lemma}
\label{lemma prescribed prime factors}
For any fixed $0<c<\infty$, there exists an increasing sequence of squarefree odd integers $\{ q_n \}$ such that 
$$ 2^{\omega(q_n)+1} = (c+o(1)) \log q_n. $$
\end{lemma}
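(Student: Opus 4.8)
The plan is to construct the sequence $\{q_n\}$ explicitly by taking products of consecutive primes and then adjusting. The natural first attempt is to let $q_n = p_1 p_2 \cdots p_n$ be the product of the first $n$ odd primes (or all the first $n$ primes), so that $\omega(q_n) = n$ and, by the prime number theorem in the form of Mertens' estimates, $\log q_n = \theta(p_n) - \log 2 \sim p_n \sim n \log n$. Then $2^{\omega(q_n)+1}/\log q_n = 2^{n+1}/(n\log n \,(1+o(1)))$, which tends to infinity far too fast: consecutive primorials overshoot the target ratio $c$ by a multiplicative factor of roughly $2$ at each step, so we cannot hope to hit a prescribed constant $c$ on the nose with primorials alone. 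The fix is to interpolate between primorials by padding with additional prime factors that grow the $\log$ without touching $\omega$ too crudely — but since each new prime increases $\omega$ by exactly $1$ and multiplies $2^{\omega}$ by exactly $2$, the right move is instead to control $\log q_n$ while $\omega(q_n)$ runs through all sufficiently large integers.

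Here is the cleaner approach I would actually carry out. Fix $c$. For each large integer $k$, I want a squarefree odd $q$ with $\omega(q) = k$ and $\log q$ as close as possible to $2^{k+1}/c$. Start from a ``seed'' set of $k$ primes and note that by choosing which $k$ primes to use I have enormous freedom in the resulting value of $\log q = \sum_{p \mid q} \log p$. Concretely, I would fix the first $k-1$ prime factors to be a primorial-type block chosen so that $\sum_{i<k}\log p_i$ is slightly below the target $T_k := 2^{k+1}/c$, and then choose the $k$-th prime factor $p$ to be the smallest prime exceeding $\exp\big(T_k - \sum_{i<k}\log p_i\big)$; since $\log$ of consecutive primes near $x$ differ by $O(1/x)\cdot x = O(1)$ actually by $\sim \log\log x$-type gaps — more carefully, by Bertrand's postulate there is a prime in $[y, 2y]$, so $\log p = T_k - \sum_{i<k}\log p_i + O(1)$. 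This gives $\log q_k = T_k + O(1)$, hence $2^{k+1}/\log q_k = c(1 + O(1/T_k)) = c + o(1)$ as $k \to \infty$. Relabel these as $\{q_n\}$; they are squarefree, odd, and increasing (if not automatically increasing, pass to a subsequence, or force monotonicity by always taking the $k-1$ smallest available primes as the seed block so $q_k \to \infty$).

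The one technical point requiring care — and the main obstacle — is ensuring that the ``seed'' block of $k-1$ primes can always be chosen with $\sum_{i<k}\log p_i$ lying in a window of the form $[T_k - \log T_k, \, T_k]$ or similar, i.e.\ that it lands \emph{below} $T_k$ but not too far below, so that the final prime $p$ is a genuine prime (bigger than all the seed primes, which we need for squarefreeness and for $q_k$ being a product of distinct primes) and so that replacing ``the smallest prime exceeding $y$'' introduces only an $O(1)$ error in $\log$. Since $T_k = 2^{k+1}/c$ grows exponentially in $k$ while the $k-1$ smallest odd primes have $\sum \log p_i = \theta(p_{k-1}) + O(1) \sim k \log k \ll T_k$, the seed block always sits comfortably below $T_k$, leaving a huge gap to be filled by a single enormous prime $p \approx \exp(T_k)$; primes of that size certainly exist and exceed $p_{k-1}$, so there is no real difficulty, only bookkeeping. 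One should also check the boundary interaction with Remark~\ref{remark consider squarefree moduli} is irrelevant here since we are free to take $q_n$ odd and squarefree from the start. Assembling these estimates gives $2^{\omega(q_n)+1} = (c+o(1))\log q_n$, as required.
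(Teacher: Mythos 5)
Your proof is correct, but the construction is genuinely different from the paper's. The paper builds $q_n$ out of primes on a geometric scale: it fixes an auxiliary constant $c_1$ slightly below $1/\log 2$, picks one prime $p_\ell$ from each interval $(\exp(c_1^{-1}2^\ell), 2\exp(c_1^{-1}2^\ell))$ via Bertrand, and pads with finitely many extra primes $p'_\ell$ from shifted intervals to adjust $c_1$ up to $c$. Thus every prime factor contributes $\log p_\ell \approx c_1^{-1}2^\ell$, and $\log q_n = c_1^{-1}2^{n+1} + O_c(n)$; adding one more prime factor roughly doubles $\log q$, in lockstep with the doubling of $2^{\omega(q)}$, giving relative error $O(n/2^n)$. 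You instead use a tiny seed block (the first $k-1$ odd primes, total weight $\sim k\log k$) together with one enormous prime $p$ chosen via Bertrand near $\exp(T_k - \sum_{i<k}\log p_i)$ where $T_k = 2^{k+1}/c$; this achieves $\log q_k = T_k + O(1)$, a relative error $O(2^{-k})$, so a single fresh Bertrand application per index suffices. Your route is conceptually simpler and gives a tighter error term, at the small cost that the prime factors of $q_k$ change completely from one index to the next (no nesting), whereas the paper's $q_n$ form a divisor chain $q_n \mid q_{n+1}$, which makes the monotonicity of the sequence immediate. Your remark that monotonicity can be forced by passing to a subsequence (or observed directly, since $q_k$ is dominated by the final prime $\approx \exp(T_k)$ and $T_k$ grows geometrically) is adequate. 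Both arguments rely only on Bertrand's postulate and elementary estimates, and both correctly ensure the $q_n$ are odd, squarefree, and have the prescribed asymptotic ratio.
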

\begin{proof}
Fix $0<c<\infty$, and define $e_c:= \min\{e\geq 1 : 2^{-e}c<\frac 2{\log 4}\}$ and $c_1:=2^{-e_c}c<\frac 2{\log 4}$. Define for $\ell=1,2,...$ the intervals
$$ I_{\ell}:=(\exp(c_1^{-1}2^{\ell}),2\exp(c_1^{-1}2^{\ell})), \hspace{1cm} J_{\ell}:=(2\exp(c_1^{-1}2^{\ell}),4\exp(c_1^{-1}2^{\ell})). $$
Since $c_1<\frac{2}{\log 4}$ we have that for all $\ell\geq1$,
$$4\exp(c_1^{-1}2^{\ell}) < \exp(c_1^{-1}2^{\ell+1}); $$
hence our intervals are all disjoint. We define $p_{\ell}$ to be any prime in the interval $I_{\ell}$, and similarly for $p'_{\ell}\in J_{\ell}$. The existence of such primes is granted by Bertrand's postulate (note that $\exp(c_1^{-1}2^1)>4$). Now, the sequence of moduli we are looking for is
$$ q_n:= \prod_{1\leq \ell \leq e_c} p'_{\ell} \prod_{1\leq \ell \leq n} p_{\ell}, $$
since
\begin{align*}
\frac{2^{\omega(q_n)+1}}{\log q_n} &= \frac{2^{n+e_c+1}}{O_c(1)+ \sum_{1\leq \ell\leq n} (c_1^{-1}2^{\ell} +O(1) )}=\frac{2^{n+e_c+1}}{ c_1^{-1}2^{n+1} +O_c(n)} \\&= 2^{e_c}c_1\left(1+O_c\left( \frac n{2^n}\right) \right)=c(1+o(1)),
\end{align*}
by definition of $c_1$.

\end{proof}

Before proving the second part of Theorem \ref{theorem extreme Dirichlet races}, we give some information about the characteristic function of the random variables we are interested in. The following lemma implies a central limit theorem.

\begin{lemma}
Let $X_q$ be the random variable defined in \eqref{equation residues versus non residues random variable}, and define
$$Y_q:=\frac {X_q-\E[X_q]}{\sqrt{\V[X_q]}}=\frac 1{\sqrt{\V[X_q]}}\sum_{\substack{ \chi\bmod q \\ \chi^2=\chi_0 \\ \chi\neq \chi_0}} \sum_{\gamma_{\chi}>0} \frac{2\Re(Z_{\gamma_{\chi}})}{\sqrt{\frac 14+\gamma_{\chi}^2}}.$$
The characteristic function of $Y_q$ satisfies, for $|\xi|\leq \frac 35 \sqrt{\V[X_q]}$,
$$ \hat Y_q(\xi)= -\frac{\xi^2}2+O\left( \frac{\xi^4}{\rho(q)\log q'} \right). $$
Moreover, in the same range we have
\begin{equation}
 \hat Y_q(\xi)\leq -\frac{\xi^2}2. 
 \label{equation bound on Y_q}
\end{equation}
\label{lemma clt}
\end{lemma}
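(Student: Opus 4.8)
The plan is to compute the logarithm of the characteristic function $\hat{Y}_q(\xi)$ directly from the representation of $Y_q$ as a normalized sum of independent terms $\frac{2\Re(Z_{\gamma_\chi})}{\sqrt{\frac14+\gamma_\chi^2}}$, and then Taylor-expand. Write $\sigma_q^2:=\V[X_q]$ and, for each zero $\gamma=\gamma_\chi>0$, set $r_\gamma:=2/\sqrt{\frac14+\gamma^2}$, so that $Y_q=\sigma_q^{-1}\sum_\gamma r_\gamma\Re(Z_\gamma)$. Since the $Z_\gamma$ are independent and uniform on the unit circle, $\Re(Z_\gamma)$ has the arcsine distribution, and by independence
$$ \hat{Y}_q(\xi)=\log \E\!\left[e^{i\xi Y_q}\right]=\sum_{\gamma}\log \E\!\left[e^{i(\xi r_\gamma/\sigma_q)\Re(Z_\gamma)}\right]=\sum_\gamma \log J_0\!\left(\frac{\xi r_\gamma}{\sigma_q}\right), $$
where $J_0$ is the Bessel function, since $\E[e^{it\Re(Z)}]=J_0(t)$ for $Z$ uniform on the unit circle. (One could instead just expand $\log\E[e^{iu\Re(Z)}]=-u^2/4-u^4/64-\cdots$ without naming $J_0$; either way the even series has the key feature that every coefficient is negative.)

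First I would record the elementary expansion: for $|u|$ bounded, $\log\E[e^{iu\Re(Z)}]=-\tfrac{u^2}{4}+O(u^4)$, and more importantly $\log\E[e^{iu\Re(Z)}]\le -\tfrac{u^2}{4}$ for all real $u$ in the relevant range, because $\E[e^{iu\Re(Z)}]=J_0(u)$ and $|J_0(u)|\le e^{-u^2/4}$ on a suitable interval — or, cleanly, because $\E[\cos(u\Re Z)]\le 1-\tfrac{u^2}{4}+\tfrac{u^4}{64}\le e^{-u^2/4}$ when $u$ is not too large. Summing over $\gamma$ with $u=\xi r_\gamma/\sigma_q$ gives
$$ \hat{Y}_q(\xi)=-\frac{\xi^2}{4\sigma_q^2}\sum_\gamma r_\gamma^2+O\!\left(\frac{\xi^4}{\sigma_q^4}\sum_\gamma r_\gamma^4\right). $$
By \eqref{equation first two moments} (together with the fact that zeros come in conjugate pairs, so $\V[X_q]=\tfrac12\sum_{\gamma}r_\gamma^2$ summing $\gamma$ over all nonzero imaginary parts, or equivalently $\sum_{\gamma>0}r_\gamma^2=2\sigma_q^2$ — I'd fix the bookkeeping constant carefully here), the main term is exactly $-\xi^2/2$.

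The remaining point is to bound the error term and to check the range of validity. For the error I need $\sum_{\gamma>0}r_\gamma^4=\sum_{\gamma>0}\frac{16}{(\frac14+\gamma^2)^2}\ll \sum_{\gamma>0}\frac{1}{\frac14+\gamma^2}\asymp \sigma_q^2\asymp \rho(q)\log q'$ by Lemma \ref{lemma variance of X_q} (using $(\tfrac14+\gamma^2)^{-2}\le 16(\tfrac14+\gamma^2)^{-1}$ since $\tfrac14+\gamma^2\ge\tfrac14$, over the multiset of all quadratic-character zeros mod $q$). Hence $\sigma_q^{-4}\sum r_\gamma^4\ll (\rho(q)\log q')^{-1}$, giving the error $O(\xi^4/(\rho(q)\log q'))$ claimed, once one also checks that for $|\xi|\le\tfrac35\sigma_q$ each argument $u=\xi r_\gamma/\sigma_q$ satisfies $|u|\le \tfrac35 r_\gamma\le \tfrac35\cdot 2/\sqrt{\tfrac14}=\tfrac{12}{5}\cdot\ldots$ — actually $r_\gamma\le 2/\sqrt{1/4+\gamma_1^2}$ where $\gamma_1\ge 14.13$ is the smallest ordinate, so $|u|$ is in fact tiny and safely inside the disk where the Bessel expansion and the inequality $J_0(u)\le e^{-u^2/4}$ hold. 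This uniform smallness of $u$ is exactly what makes the termwise inequality $\log J_0(u)\le -u^2/4$ valid, yielding \eqref{equation bound on Y_q} after summation. The main obstacle is bookkeeping rather than conceptual: getting the normalization constants between "$\sum_{\gamma>0}$", "$\sum_{\gamma}$ over conjugate pairs", and the factor $2$ in $2\Re(Z_{\gamma_\chi})$ to line up so that the quadratic term is precisely $-\xi^2/2$, and confirming the constant $\tfrac35$ is compatible with the interval on which $\cos$-type inequality $\E[\cos(u\Re Z)]\le e^{-u^2/4}$ holds; both are routine once set up carefully.
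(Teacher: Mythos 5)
Your proof follows essentially the same route as the paper's: express $\log\hat Y_q(\xi)$ as the sum over zeros of $\log J_0$ of the scaled arguments (using additivity of the cumulant generating function for independent summands), Taylor-expand via $\log J_0(u)=-u^2/4+O(u^4)$ and the one-sided bound $\log J_0(u)\le -u^2/4$ on $|u|\le 12/5$, and control the quartic error through $\sum(\tfrac14+\gamma^2)^{-2}\ll\rho(q)\log q'$. The paper works with $\log\hat X_q$ first and normalizes afterward, and gets the fourth-power sum from the Riemann--von Mangoldt formula, whereas you observe the slightly cleaner pointwise comparison $(\tfrac14+\gamma^2)^{-2}\le 16(\tfrac14+\gamma^2)^{-1}$ and then reuse Lemma \ref{lemma variance of X_q}; these are cosmetically different ways to the same estimate. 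One factual slip worth correcting: your aside that the lowest ordinate $\gamma_1$ of a quadratic Dirichlet $L$-function satisfies $\gamma_1\ge 14.13$ is backwards --- Odlyzko's theorem quoted in the introduction says $\zeta(s)$ has the \emph{highest} first zero, so the other $L$-functions have $\gamma_1\le 14.13$, and in any case under LI alone one only has $\gamma>0$ with no uniform lower bound built into the argument. Fortunately the remark is superfluous: the trivial bound $r_\gamma\le 2/\sqrt{1/4}=4$, which you compute immediately before it, already gives $|u|\le 12/5$ on the stated range of $\xi$, which is exactly what the Bessel-function expansion and the inequality $\log J_0(u)\le -u^2/4$ require.
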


\begin{proof}
The proof is very similar to that of Theorem 3.22 of \cite{FiMa}. Using the additivity of the cumulant-generating function of $X_q$, one can show that 
\begin{equation}
\log \hat X_q(\xi) =i \E[X_q] \xi+ \sum_{\substack{ \chi\bmod q \\ \chi^2=\chi_0 \\ \chi\neq \chi_0}} \sum_{\gamma_{\chi}>0} \log \left(J_0 \left( \frac{2 \xi}{\sqrt{\frac 14+\gamma_{\chi}^2}} \right) \right). 
\label{equation characteristic function of X_q}
\end{equation} 
We will use the following Taylor expansion, which is valid for $|\xi|\leq \frac {12}5$ (see Section 2.2 of \cite{FiMa}):
\begin{equation}
 \log J_0(\xi) = -\frac{\xi^2}4 +O(\xi^4).
 \label{equation Taylor expansion of Bessel}
\end{equation}
Plugging this estimate into \eqref{equation characteristic function of X_q} we get that for $|\xi|\leq \frac 35$,
$$ \log \hat X_q(\xi) = i \E[X_q] \xi-\xi^2 \sum_{\substack{ \chi\bmod q \\ \chi^2=\chi_0 \\ \chi\neq \chi_0}} \sum_{\gamma_{\chi}>0}  \frac{1}{\frac 14+\gamma^2} +O \left( \xi^4 \sum_{\substack{ \chi\bmod q \\ \chi^2=\chi_0 \\ \chi\neq \chi_0}}\sum_{\gamma_{\chi}>0}  \frac{1}{\left(\frac 14+\gamma^2\right)^2}  \right).  $$
One can show using the Riemann-von Mangoldt formula that $$ \sum_{\substack{ \chi\bmod q \\ \chi^2=\chi_0 \\ \chi\neq \chi_0}} \sum_{\gamma_{\chi}>0}  \frac{1}{\left(\frac 14+\gamma^2\right)^2} \ll \rho(q) \log q'.$$
Moreover, by Lemma \ref{lemma variance of X_q} we have $\V[X_q]\asymp \rho(q)\log q'$. Putting these together and using \eqref{equation first two moments}, we get that
$$ \log \hat Y_q(\xi) =\log \hat X_q\left(\frac{\xi}{\sqrt{\V[X_q]}}\right) - i\E[X_q] \frac{\xi}{\sqrt{\V[X_q]}} = -\frac{\xi^2}2 + O\left( \frac{\xi^4}{\rho(q)\log q'}\right), $$
showing the first assertion. For the second we use the same argument, but we replace the estimate \eqref{equation Taylor expansion of Bessel} with the following inequality, valid in the range $|\xi|\leq \frac {12}5$:
$$ \log J_0(\xi) \leq -\frac{\xi^2}4. $$
\end{proof}

\begin{lemma}[Berry-Essen inequality]
Denote by $F_q$ the distribution function of 
$$Y_q:=\frac {X_q-\E[X_q]}{\sqrt{\V[X_q]}},$$ and by $F$ that of the Gaussian distribution. We have that
$$ \sup_{x \in \mathbb R}|F_q(x)-F(x)| \ll \frac 1{\rho(q)\log q'}. $$
\label{lemma berry esseen}
\end{lemma}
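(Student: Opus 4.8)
The plan is to deduce the Berry--Esseen-type bound from the characteristic function estimate in Lemma \ref{lemma clt} via the classical Berry--Esseen smoothing inequality. Recall that the smoothing inequality states that for any distribution function $G$ with characteristic function $\hat G$ and any $T>0$,
$$ \sup_{x\in\mathbb R}|F_q(x)-G(x)| \ll \int_{-T}^{T} \left| \frac{\hat Y_q(\xi)-\hat G(\xi)}{\xi} \right| d\xi + \frac{\sup_x |G'(x)|}{T}, $$
and I would apply this with $G=F$ the standard Gaussian, so that $\hat F(\xi)=e^{-\xi^2/2}$ and $\sup_x|F'(x)|=1/\sqrt{2\pi}$.

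First I would split the integral at a cutoff of order $\sqrt{\V[X_q]}\asymp\sqrt{\rho(q)\log q'}$, say over $|\xi|\leq \tfrac 35\sqrt{\V[X_q]}$ (the range where Lemma \ref{lemma clt} applies) and its complement. In the central range, Lemma \ref{lemma clt} gives $\log\hat Y_q(\xi) = -\tfrac{\xi^2}2 + O(\xi^4/(\rho(q)\log q'))$, so that
$$ \left| \hat Y_q(\xi) - e^{-\xi^2/2} \right| \ll \frac{\xi^4}{\rho(q)\log q'} e^{-\xi^2/2} $$
(using $|e^w-1|\ll|w|e^{|w|}$ together with the fact that the error term is bounded on the relevant range, which one arranges by first handling $|\xi|$ up to a small constant multiple of $(\rho(q)\log q')^{1/4}$ and noting that beyond that both $\hat Y_q(\xi)$ and $e^{-\xi^2/2}$ are already tiny by \eqref{equation bound on Y_q}). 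Dividing by $|\xi|$ and integrating $\xi^3 e^{-\xi^2/2}$ over $\mathbb R$ contributes $O(1/(\rho(q)\log q'))$. For the tail $|\xi|>\tfrac 35\sqrt{\V[X_q]}$, the Gaussian piece $\int |\xi|^{-1}e^{-\xi^2/2}d\xi$ is exponentially small, and the contribution of $\hat Y_q$ itself I would bound by combining \eqref{equation bound on Y_q} (valid up to the cutoff, giving $|\hat Y_q(\xi)|\leq e^{-\xi^2/2}$ there) with a standard argument showing $\hat Y_q$ does not return near $1$ for larger $\xi$ — for instance, since $Y_q$ is a normalized sum of many independent bounded pieces $2\Re(Z_{\gamma_\chi})/\sqrt{\tfrac14+\gamma_\chi^2}$, one has decay of $|\hat Y_q|$ away from the origin. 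Finally the term $\sup_x|F'(x)|/T$ with $T\asymp\sqrt{\rho(q)\log q'}$ is $O((\rho(q)\log q')^{-1/2})$, which is dominated by $O(1/(\rho(q)\log q'))$ when $\rho(q)\log q'$ is large; collecting everything yields $\sup_x|F_q(x)-F(x)|\ll 1/(\rho(q)\log q')$.

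The main obstacle is the tail range $|\xi|>\tfrac35\sqrt{\V[X_q]}$, where Lemma \ref{lemma clt} gives no information: one must show $|\hat Y_q(\xi)|$ stays bounded away from $1$ (indeed decays) uniformly in this range, so that $\int_{\text{tail}}|\hat Y_q(\xi)/\xi|\,d\xi$ is acceptably small. This is the usual delicate point in proving Berry--Esseen bounds for sums of lattice-type or circular random variables; here it follows from the product formula $\hat Y_q(\xi)=\prod_{\gamma_\chi>0} J_0(2\xi/(\sqrt{\tfrac14+\gamma_\chi^2}\sqrt{\V[X_q]}))$ implicit in \eqref{equation characteristic function of X_q}, together with the fact that $|J_0(t)|$ is bounded away from $1$ once $t$ is bounded away from $0$, and that there are $\gg\rho(q)\log q'$ zeros contributing, so that a positive proportion of the factors are genuinely less than $1$ in absolute value for any given $\xi$ in the tail. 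One then chooses the cutoff $T$ just below the point where this product argument would break down, matching it against the $1/T$ term. Everything else is the routine smoothing-inequality bookkeeping carried out in, e.g., the proof of Theorem 3.22 of \cite{FiMa}.
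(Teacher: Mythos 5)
Your overall strategy -- Esseen's smoothing inequality, the central limit estimate of Lemma \ref{lemma clt} on a central range, and a $J_0$-type decay argument on the tail -- is the same as the paper's. But your choice of the cutoff $T$ defeats the goal, and your claimed domination at the end is reversed. You set $T\asymp\sqrt{\V[X_q]}\asymp\sqrt{\rho(q)\log q'}$, so the smoothing remainder $\sup_x|F'(x)|/T$ is $\asymp(\rho(q)\log q')^{-1/2}$; you then assert this is ``dominated by $O(1/(\rho(q)\log q'))$ when $\rho(q)\log q'$ is large.'' That inequality runs the wrong way: for large $\rho(q)\log q'$ one has $(\rho(q)\log q')^{-1/2}\gg(\rho(q)\log q')^{-1}$. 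As written, your argument only yields $\sup_x|F_q(x)-F(x)|\ll(\rho(q)\log q')^{-1/2}$, which is strictly weaker than the lemma.

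To recover the full bound you must take $T\asymp\V[X_q]\asymp\rho(q)\log q'$, which is what the paper does; then $1/T\ll(\rho(q)\log q')^{-1}$ as needed. With this larger $T$ the error term $\xi^4/(\rho(q)\log q')$ in Lemma \ref{lemma clt} only stays bounded up to $|\xi|\ll\V[X_q]^{1/4}$, so the tail $\V[X_q]^{1/4}\ll|\xi|\le\V[X_q]$ is now long, and the $J_0$-product bound you allude to carries all the weight there. Concretely, for any fixed small $\lambda$ and $|\xi|>\lambda$ one has $\bigl|J_0\bigl(2\xi/\sqrt{\tfrac14+\gamma_\chi^2}\bigr)\bigr|\le J_0\bigl(\lambda/\sqrt{\tfrac14+\gamma_\chi^2}\bigr)$ uniformly in $\gamma_\chi$, whence $|\hat Y_q(\xi)|\le|\hat Y_q(\tfrac5{12}\V[X_q]^{1/4})|\le\exp(-c\sqrt{\V[X_q]})$ by \eqref{equation bound on Y_q}; the tail contribution to the Esseen integral is therefore $\ll\log\V[X_q]\cdot e^{-c\sqrt{\V[X_q]}}$, negligible. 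So there is no delicate balance between the point where the product argument ``breaks down'' and $1/T$: the product argument gives exponentially small bounds far beyond $\sqrt{\V[X_q]}$, and the only thing forcing $T$ to be as large as $\rho(q)\log q'$ is the $1/T$ term itself. Your sketch of the tail is conceptually right but your calibration of $T$ is not.
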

\begin{remark}
One could get a more precise estimate using the Martin-Feuerverger formula \cite{FeMa}. However, the estimate of Lemma \ref{lemma berry esseen} is sufficient for our purposes. 
\end{remark}
\begin{proof}
Since the statement is trivial if $\rho(q)\log q'$ is bounded, we can assume without loss of generality that $\V[X_q]\geq 1$ (by Lemma \ref{lemma variance of X_q}).

The Berry-Esseen inequality in the form given by Esseen (Theorem 2a of \cite{Ess}) gives that for any $T>0$,
\begin{equation}
\sup_{x \in \mathbb R}|F_q(x)-F(x)|  \ll \int_{-T}^T \frac{\hat Y_q(\xi)-e^{-\frac{\xi^2}2}}{\xi}d\xi + \frac 1T.
\label{equation Berry-Esseen}
\end{equation} 
We take $T:=\V[X_q]$. By Lemma \ref{lemma clt}, the part of the integral with $|\xi|\leq \frac 35 \V[X_q]^{\frac 14}$ is at most
$$\int_{-\frac 35 \V[X_q]^{\frac 14}}^{\frac 35 \V[X_q]^{\frac 14}} \frac{ e^{-\frac{\xi^2}2} \bigg(e^{O\left( \frac{\xi^4}{\rho(q)\log q'}\right)} -1\bigg)}{\xi}d\xi \ll \frac 1{\rho(q)\log q'} \int_{\mathbb R} \xi^3e^{-\frac {\xi^2}2} d\xi \ll  \frac 1{\rho(q)\log q'}.$$
We now bound the remaining part of the integral using an argument analogous to Proposition 2.14 of \cite{FiMa}. Fix $0\leq \lambda \leq \frac 56$. By the properties of the Bessel function $J_0(x)$, we have that if $|\xi|>\lambda$, then whatever $\gamma_{\chi}\in \mathbb R$ is,
$$ \left|J_0 \Bigg( \frac{2 \xi}{\sqrt{\frac 14+\gamma_{\chi}^2}} \Bigg) \right| \leq  J_0 \Bigg( \frac{\lambda}{\sqrt{\frac 14+\gamma_{\chi}^2}} \Bigg).$$
By \eqref{equation characteristic function of X_q}, this shows that in the range $|\xi|>\frac 5{12}\V[X_q]^{-\frac 14}$ we have $|\hat X_q(\xi)|\leq |\hat X_q(\frac 5{12}\V[X_q]^{-\frac 14})|$ (since $\V[X_q]\geq 1$), and so
\begin{align*}\int_{\frac 35 \V[X_q]^{\frac 14}<|\xi|\leq \V[X_q]} \frac{\hat Y_q(\xi)-e^{-\frac{\xi^2}2}}{\xi}d\xi &\ll \hat Y_q\left(\frac 5{12} \V[X_q]^{\frac 14}\right) \log\V[X_q] + \int_{|\xi|>\frac 35 \V[X_q]^{\frac 14}} \frac{e^{-\frac{\xi^2}2}}{\xi}d\xi\\
& \ll e^{-\frac{25}{577}\V[X_q]^{\frac 12}}+e^{-\frac{9}{51}\V[X_q]^{\frac 12}},
\end{align*}
by \eqref{equation bound on Y_q}. Applying Lemma \ref{lemma variance of X_q}, we conclude that the right hand side of \eqref{equation Berry-Esseen} is at most a constant times $(\rho(q)\log q')^{-1}$.

\end{proof}

\begin{proof}[Proof of Theorem \ref{theorem extreme Dirichlet races}, second part]
Fix $\eta \in [\frac 12,1]$. We wish to find a sequence of moduli $\{q_n\}$ such that $\delta(q_n,NR,R)\rightarrow \eta$. The case $\eta=1$ was already covered in part (1), and the case $\eta = \frac 12$ follows from taking prime values of $q$, by the central limit theorem of Rubinstein and Sarnak \cite{RubSar}. Therefore we can assume that $\frac 12<\eta<1$. 

Let $\kappa>0$ be the unique real solution of the equation 
$$ \frac 1{\sqrt{2\pi}}\int_{-\kappa}^{\infty} e^{-\frac{t^2}2}dt=\eta.  $$
Let moreover $\{q_n\}$ be the sequence of squarefree odd integers coming from Lemma \ref{lemma prescribed prime factors} for which 
$$ 2^{\omega(q_n)+1} = \log q_n'(\kappa^2+o(1)). $$
By \eqref{equation computation of B(q)}, this gives that as $n\rightarrow \infty,$ 
$$ B(q_n):=\frac{\mathbb E[X_{q_n}]}{\sqrt{\V[X_{q_n}]}}  \longrightarrow\kappa. $$

Define 
$$ Y_{q_n}:= \frac{X_{q_n}-\E[X_{q_n}]}{\sqrt{\V[X_{q_n}]}} =\frac{X_{q_n}}{\sqrt{\V[X_{q_n}]}}-B(q_n).$$

We will use the central limit theorem of Lemma \ref{lemma clt}, as well as the Berry-Essen inequality \eqref{equation Berry-Esseen}. Denoting by $F_{q_n}$ the distribution function of $Y_{q_n}$ and by $F$ that of the Gaussian distribution, we have that

\begin{align*}
|\delta(q_n,NR,R)-\eta| & = |\P[X_{q_n}>0]-\eta| = |\P[X_{q_n}\leq 0]-(1-\eta)| \\
&=|F_{q_n}(-B(q_n)) - F(-\kappa) | \\ &\leq |F_{q_n}(-B(q_n)) - F(-B(q_n)) | + |F(-B(q_n))-F(-\kappa) |\\
& \ll \frac 1{\rho(q_n)\log q_n'} + |\kappa-B(q_n)|,
\end{align*}
by Lemma \ref{lemma berry esseen} and by the fact that the probability density function of the Gaussian is bounded on $\mathbb R$. Looking at the proof of Lemma \ref{lemma prescribed prime factors} we see that $\rho(q_n) \rightarrow \infty$, hence this last quantity tends to zero as $n\rightarrow \infty$, concluding the proof.
\end{proof}

\label{section Dirichlet}

\section{A more precise estimation of the bias using the theory of large deviations}
\label{deviations}

To give a more precise estimate for the bias we are interested in under LI, we use the theory of large deviations of independent random variables. The fundamental estimate of this section is Theorem 2 of \cite{MoOd}.

\begin{theorem}[Montgomery and Odlyzko]
\label{theorem mood}
For $n=1,2,...$ let $Y_n$ be independent real valued random variables such that $\E[Y_n]=0$ and $|Y_n|\leq 1$. Suppose that there is a constant $c>0$ such that $\E[Y_n^2]\geq c$ for all $n$. Put $Y=\sum r_n Y_n$ where $\sum r_n^2 <\infty$.

If $\sum_{|r_n|\geq \alpha} |r_n| \leq V/2$ then
$$ \P[Y\geq V] \leq \exp\bigg( -\frac 1{16} V^2 \bigg( \sum_{|r_n|<\alpha} r_n^2 \bigg)^{-1}\bigg). $$

If $\sum_{|r_n|\geq \alpha} |r_n| \geq 2V$ then
$$ \P[Y\geq V] \geq a_1 \exp\bigg( -a_2 V^2 \bigg( \sum_{|r_n|<\alpha} r_n^2 \bigg)^{-1}\bigg). $$
Here $a_1>0$ and $a_2>0$ depend only on $c$.
\end{theorem}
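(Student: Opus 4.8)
The statement to prove is the Montgomery--Odlyzko large deviation theorem (Theorem 2 of \cite{MoOd}), which the paper quotes verbatim. Since this is quoted as a known result, the "proof" here is really a sketch of the classical large-deviation argument, so let me plan that out.

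\medskip

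The plan is to prove the two inequalities by the standard exponential-moment (Chernoff) method, splitting the sum $Y=\sum r_n Y_n$ into the ``small'' part $Y^- := \sum_{|r_n|<\alpha} r_n Y_n$ and the ``large'' part $Y^+ := \sum_{|r_n|\geq \alpha} r_n Y_n$, and treating the two parts quite differently.

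First, for the \textbf{upper bound}, I would observe that if $\sum_{|r_n|\geq\alpha}|r_n|\leq V/2$ then, since $|Y_n|\le 1$, the large part satisfies $|Y^+|\le V/2$ deterministically, so $\P[Y\ge V]\le \P[Y^-\ge V/2]$. For $Y^-$ I apply the exponential Markov inequality: $\P[Y^-\ge V/2]\le e^{-\lambda V/2}\prod_{|r_n|<\alpha}\E[e^{\lambda r_n Y_n}]$ for $\lambda>0$. Using $\E[Y_n]=0$, $|r_nY_n|<\alpha$, and the elementary bound $e^x\le 1+x+x^2$ for $|x|$ bounded (here one pins down the range of $\lambda$ so that $\lambda\alpha$ stays $\le 1$, say), one gets $\E[e^{\lambda r_n Y_n}]\le 1+\lambda^2 r_n^2 \E[Y_n^2]\cdot(\text{const})\le \exp(\lambda^2 r_n^2\cdot C)$. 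Writing $S:=\sum_{|r_n|<\alpha}r_n^2$, this yields $\P[Y^-\ge V/2]\le \exp(-\lambda V/2 + C\lambda^2 S)$, and optimizing over the admissible $\lambda$ (taking $\lambda\asymp V/S$, which is legitimate since $V\le 2\sum_{|r_n|\ge\alpha}|r_n|\le 2\alpha\cdot(\#\{\dots\})$\,---\,or more carefully one checks $V/S$ lies in range, otherwise $S$ is so small the bound is trivial) produces $\exp(-cV^2/S)$ with an absolute constant, which after bookkeeping can be taken to be $1/16$. The constant-tracking to land exactly on $1/16$ is the fiddly part but not conceptually hard.

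For the \textbf{lower bound}, the hypothesis is reversed: $\sum_{|r_n|\ge\alpha}|r_n|\ge 2V$, so now it is the large part $Y^+$ that can push $Y$ above $V$, and we need the condition $\E[Y_n^2]\ge c$ (i.e.\ the $Y_n$ are genuinely spread out, not nearly degenerate) to guarantee each large-index summand contributes a non-negligible amount of mass on the correct side. The strategy is: condition on the signs; by symmetrization or a direct Paley--Zygmund/second-moment argument applied to $\sum_{|r_n|\ge\alpha}r_nY_n$, show $\P[Y^+\ge 3V]\ge a_1'$ for some $a_1'>0$ depending only on $c$ (this uses $\sum|r_n|\ge 2V$ together with a lower bound on the second moment of $Y^+$, which is $\ge c\sum_{|r_n|\ge\alpha}r_n^2 \ge c\alpha\sum_{|r_n|\ge\alpha}|r_n|\ge 2c\alpha V$, and an upper bound on its fourth moment to apply Paley--Zygmund). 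Then one needs $Y^-\ge -2V$ with probability bounded below; by the upper-bound argument just proved (applied to $-Y^-$, i.e.\ Chernoff again) $\P[Y^-< -2V]\le \exp(-a_2' V^2/S)$, which for the relevant range of parameters is $\le \tfrac12$, or else one absorbs it into the claimed exponential factor $\exp(-a_2 V^2/S)$. Combining, $\P[Y\ge V]\ge \P[Y^+\ge 3V]\cdot(\text{something})\ge a_1\exp(-a_2 V^2 S^{-1})$ after choosing constants appropriately, independence being used to factor the events on disjoint index sets.

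\medskip

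The main obstacle is the \textbf{lower bound}: getting a clean, constant-only-depending-on-$c$ estimate on the probability that the large-coefficient part overshoots, without linear independence or any distributional assumption beyond $\E[Y_n]=0$, $|Y_n|\le 1$, $\E[Y_n^2]\ge c$. The right tool is a second/fourth-moment (Paley--Zygmund-type) argument on $Y^+$, and the delicate point is that one must combine it with the Chernoff control of the small part over a disjoint index set and verify that the exponents can be unified into the single clean form $a_2 V^2(\sum_{|r_n|<\alpha}r_n^2)^{-1}$; the upper bound, by contrast, is a routine Chernoff computation whose only subtlety is chasing the absolute constant down to $1/16$. Since the statement is quoted from \cite{MoOd}, I would in practice simply cite that reference for the full details and record the Chernoff sketch above for the reader's convenience.
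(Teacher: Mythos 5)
The paper does not prove this statement at all: it is quoted verbatim as Theorem~2 of Montgomery--Odlyzko \cite{MoOd}, and the only "proof" in the paper is the citation itself. Your closing remark, that in practice one would simply cite \cite{MoOd}, is therefore precisely what the author does; there is no argument in the paper to compare your sketch against.

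Since you did offer a sketch, a few remarks on it. The decomposition $Y=Y^++Y^-$ with an exponential-moment bound on the small part is indeed the right skeleton. But as written the upper bound has a gap: the unconstrained optimizer $\lambda\asymp V/S$ of $\exp(-\lambda V/2+C\lambda^2 S)$, with $S=\sum_{|r_n|<\alpha}r_n^2$, can violate the requirement $\lambda\alpha\lesssim 1$ under which $e^x\le 1+x+x^2$ is used, and in that regime the constrained choice $\lambda\asymp 1/\alpha$ only produces an exponent of order $V/\alpha$, which can be smaller than $V^2/S$, so the claimed bound does not follow; Hoeffding's lemma $\E[e^{tY_n}]\le e^{t^2/2}$ (valid for all $t$ when $|Y_n|\le 1$ and $\E[Y_n]=0$) removes the range restriction and avoids this. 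The lower bound sketch has a more serious gap. You propose $\P[Y\ge V]\ge \P[Y^+\ge 3V]\cdot\P[Y^-\ge -2V]$ with a constant lower bound on the first factor (Paley--Zygmund) and a Chernoff upper bound on $\P[Y^-<-2V]$. In the regime where $V^2/S$ is small the desired conclusion $\P[Y\ge V]\ge a_1\exp(-a_2V^2/S)$ is essentially a constant lower bound, yet the Chernoff estimate $\P[Y^-<-2V]\le \exp(-a_2'V^2/S)$ then gives no information (the right-hand side is close to $1$), and a mean-zero variable need not satisfy $\P[Y^-\ge -2V]\gg 1$ for small $V$ -- Cantelli only gives $\P[Y^-\ge-2V]\ge 4V^2/(S+4V^2)$, which also degenerates. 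The "absorb it into the exponential" step is not justified. The actual Montgomery--Odlyzko lower-bound argument is more delicate than a Paley--Zygmund plus Chernoff pairing. None of this affects the paper, which uses the result as a black box from \cite{MoOd}.
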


To make use of these bounds we need to give estimates on sums over zeros.
\begin{lemma}
For $T\geq 1$ we have 
$$ \sum_{\substack{|\gamma_{\chi}| < T }} \frac 1{\sqrt{\frac 14+\gamma_{\chi}^2}} = \frac 1{\pi}\log (q^*\sqrt T) \log T + O(\log (q^*T)).$$
\label{lemma asymptotic for sum of zeros}
\end{lemma}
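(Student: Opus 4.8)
The plan is to express the sum in terms of the Riemann--von Mangoldt zero-counting function and then apply partial summation. Let $\chi^*$ denote the primitive character of conductor $q^*$ inducing $\chi$, and put
$$ N(t):=\#\{\rho=\beta+i\gamma:\ L(\rho,\chi^*)=0,\ 0<\beta<1,\ |\gamma|\le t\}. $$
The Riemann--von Mangoldt formula for Dirichlet $L$-functions (which is unconditional) gives, for $t\ge 2$,
$$ N(t)=\frac t\pi\log\frac{q^*t}{2\pi e}+O(\log(q^*t)), $$
while for $0\le t\le 2$ one has the crude bound $N(t)\le N(2)\ll\log q^*$. Since for $1\le T<2$ the claimed right-hand side is itself $O(\log(q^*T))$ and $\sum_{|\gamma_\chi|<T}(\tfrac14+\gamma_\chi^2)^{-1/2}\le\sum_{|\gamma_\chi|<2}(\tfrac14+\gamma_\chi^2)^{-1/2}\ll N(2)\ll\log q^*$, the lemma is trivial in that range; I would therefore assume $T\ge 2$ from now on.

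Write $w(t):=(\tfrac14+t^2)^{-1/2}$, an even function with $w'(t)=-t(\tfrac14+t^2)^{-3/2}$, and set $S(T):=\sum_{|\gamma_\chi|<T}w(\gamma_\chi)=\int_{[0,T)}w(t)\,dN(t)$. Integration by parts gives
$$ S(T)=w(T)N(T)+\int_0^T\frac{t\,N(t)}{(\tfrac14+t^2)^{3/2}}\,dt+O(\log(q^*T)), $$
the final error term absorbing the boundary contribution at $t=0$ (of size $\ll N(0)\ll\log q^*$) and the ambiguity between $|\gamma|<T$ and $|\gamma|\le T$ (which moves $O(\log(q^*T))$ zeros of size $\asymp T$, hence changes $S(T)$ by $O(\log(q^*T)/T)$). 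Since $w(T)\le 1/T$ and $N(T)\ll T\log(q^*T)$, the term $w(T)N(T)$ is $O(\log(q^*T))$, and the portion $\int_0^2$ of the integral is $O(\log q^*)$ because $N(t)\ll\log q^*$ there while $\int_0^2 t(\tfrac14+t^2)^{-3/2}\,dt<\infty$. It therefore remains to evaluate $\int_2^T t\,N(t)(\tfrac14+t^2)^{-3/2}\,dt$.

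Inserting the Riemann--von Mangoldt formula, its error term contributes $\ll\int_2^\infty t^{-2}\log(q^*t)\,dt\ll\log q^*$, so the main contribution is $\tfrac1\pi\int_2^T t^2(\tfrac14+t^2)^{-3/2}\log\tfrac{q^*t}{2\pi e}\,dt$. Using $t^2(\tfrac14+t^2)^{-3/2}=t^{-1}+O(t^{-3})$, whose error again contributes $\ll\int_2^\infty t^{-3}\log(q^*t)\,dt\ll\log q^*$, this reduces to the elementary integral $\tfrac1\pi\int_2^T t^{-1}\log\tfrac{q^*t}{2\pi e}\,dt$, which after the substitution $u=\log t$ equals $\tfrac1\pi\big(\tfrac12(\log T)^2+(\log T)\log\tfrac{q^*}{2\pi e}\big)+O(\log(q^*T))$. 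Finally, using $\tfrac12\log T+\log q^*=\log(q^*\sqrt T)$ and $\log\tfrac{q^*}{2\pi e}=\log q^*+O(1)$, this is $\tfrac1\pi(\log T)\log(q^*\sqrt T)+O(\log(q^*T))$, which is the claimed estimate. (As a sanity check, with $q^*=1$ one recovers the classical $\sum_{|\gamma|<T}|\gamma|^{-1}\sim(\log T)^2/2\pi$ for the Riemann zeta function.)

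The argument is a routine partial summation; the only delicate points are bookkeeping of the additive constants inside the logarithms (so that the $2\pi e$ of Riemann--von Mangoldt and all lower-order pieces of the $u$-integral are genuinely absorbed into $O(\log(q^*T))$), and the low-ordinate regime $t\le 2$, where the main term of $N(t)$ is no larger than its error term and one must fall back on the crude bound $N(t)\ll\log q^*$. Neither presents a serious obstacle. I note also that nothing above uses GRH: $(\tfrac14+\gamma_\chi^2)^{-1/2}$ depends only on the real number $\gamma_\chi=\Im\rho_\chi$, and the counting function $N(t)$ is unconditional; GRH only enters elsewhere, where one additionally interprets $(\tfrac14+\gamma_\chi^2)^{1/2}$ as $|\rho_\chi|$.
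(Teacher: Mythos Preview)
Your proof is correct and follows essentially the same route as the paper: both apply partial summation against the Riemann--von Mangoldt formula, bound the boundary term $w(T)N(T)$ by $O(\log(q^*T))$, and reduce the remaining integral to $\tfrac1\pi\int t^{-1}\log\tfrac{q^*t}{2\pi e}\,dt$. Your version is somewhat more detailed (separate treatment of $T<2$, explicit handling of the low-ordinate range and of the replacement $t^2(\tfrac14+t^2)^{-3/2}=t^{-1}+O(t^{-3})$), but the argument is the same.
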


\begin{proof}
We start from the von Mangoldt formula:
$$ N(T,\chi) = \frac T{\pi} \log \frac{q^*T}{2\pi e} +O(\log q^*T).$$
With a summation by parts we get
\begin{align*}
 \sum_{\substack{|\gamma_{\chi}| < T }} \frac 1{\sqrt{\frac 14+\gamma_{\chi}^2}} &= O(\log q^*)+\int_1^{T} \frac{dN(t,\chi)}{\sqrt{\frac 14+t^2}} \\
&=  \frac{N(T,\chi)}{\sqrt{\frac 14+T^2}}+\int_1^T \frac{tN(t,\chi)}{\left(\frac 14+t^2\right)^{\frac 32}}dt + O(\log q^*) \\
&= \int_1^T \frac{\frac {t^2}{\pi} \log \frac{q^*t}{2\pi e}  }{\left(\frac 14+t^2\right)^{\frac 32}} dt + O(\log (q^*T))\\
&= \frac 1{\pi}\log (q^*\sqrt T) \log T +O(\log q^*T).
\end{align*}
\end{proof}

\begin{lemma}
Let $\F(q)$ be a subset of the invertible residues $\bmod q$ such that $\chi \in \F(q) \Rightarrow \overline{\chi} \in \F(q)$. Define the random variable 
$$ Y :=\sum_{\chi \in \F(q)} \sum_{\gamma_{\chi}>0} \frac{2\Re(Z_{\gamma_{\chi}})}{\sqrt{\frac 14+\gamma_{\chi}^2}},$$
where the $Z_{\gamma_{\chi}}$ are i.i.d. uniformly distributed on the unit circle. Then, we have for $q$ large enough that
$$ a_1 \exp\left(-a_2 \frac{|\F(q)|}{L(q)}\right) \leq  \P[Y\geq|\F(q)|] \leq  \exp\left(-a_3 \frac{|\F(q)|}{L(q)}\right),    $$
where the $a_i$ are absolute constants and
$$ L(q):= \frac{\sum_{\chi \in \F(q)} \log q^*}{|\F(q)|}\geq \frac{\log 2}{2}.$$
\label{lemma Montgomery bounds}
\end{lemma}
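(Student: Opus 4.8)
The plan is to apply the Montgomery--Odlyzko large deviation estimates (Theorem \ref{theorem mood}) to the random variable $Y$, after rewriting it in the standard form $\sum r_n Y_n$. First I would split each summand $\frac{2\Re(Z_{\gamma_\chi})}{\sqrt{1/4+\gamma_\chi^2}}$ into its real and imaginary coordinates, so that $Y = \sum_n r_n Y_n$ where the $Y_n$ are i.i.d. with $\E[Y_n]=0$, $|Y_n|\le 1$, and $\E[Y_n^2]=\frac 12$ (so we may take $c=\tfrac12$ in Theorem \ref{theorem mood}), and where the coefficients $r_n$ are of the form $\frac{\sqrt 2}{\sqrt{1/4+\gamma_\chi^2}}$, each value occurring twice (for the two coordinates), as $\chi$ ranges over $\F(q)$ and $\gamma_\chi$ over the positive ordinates. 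The key quantitative input is then Lemma \ref{lemma asymptotic for sum of zeros}: summing over $\chi\in\F(q)$ and using $\sum_{0<\gamma_\chi<T}\frac{1}{\sqrt{1/4+\gamma_\chi^2}} \asymp \log(q^*\sqrt T)\log T$, one gets, for a truncation parameter $\alpha$,
$$ \sum_{|r_n|\ge \alpha}|r_n| \ll \sum_{\chi\in\F(q)} \sum_{0<\gamma_\chi < 1/\alpha} \frac{1}{\sqrt{1/4+\gamma_\chi^2}} \ll \sum_{\chi\in\F(q)}\Big(\log q^* \log \tfrac1\alpha + \log^2\tfrac1\alpha\Big), $$
and similarly $\sum_{|r_n|<\alpha}r_n^2 \ll \sum_{\chi\in\F(q)}\big(\log q^* + \log\tfrac1\alpha\big) \asymp |\F(q)|\, L(q)$ once $\alpha$ is a suitable absolute constant (or a small negative power of $L(q)$).

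Next I would choose the threshold $V = |\F(q)|$ and the truncation level $\alpha$ so that both hypotheses of Theorem \ref{theorem mood} can be verified. For the upper bound we need $\sum_{|r_n|\ge\alpha}|r_n| \le V/2 = |\F(q)|/2$; since $\sum_{|r_n|\ge\alpha}|r_n|$ is, by the display above, $\ll |\F(q)|\log q^*\log\frac1\alpha / (\text{something})$ — more precisely the left side is bounded by a constant times $\sum_{\chi\in\F(q)}\log q^* \cdot \log\frac1\alpha$ when $\alpha$ is bounded below, which could be $\gg |\F(q)|$ — I would instead take $\alpha$ itself to depend on $q$, e.g. $\alpha = c_0$ a sufficiently small absolute constant chosen after noting that $\sum_{0<\gamma_\chi<1/\alpha}\frac{1}{\sqrt{1/4+\gamma_\chi^2}}$ is uniformly bounded for bounded $1/\alpha$; then $\sum_{|r_n|\ge\alpha}|r_n| \ll |\F(q)|$ with an implied constant I can make $\le\tfrac12$ by taking $\alpha$ large enough (i.e. $1/\alpha$ small), using that the first few zeros contribute $O(1)$ each. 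With this choice $\sum_{|r_n|<\alpha}r_n^2 = \sum_n r_n^2 - \sum_{|r_n|\ge\alpha}r_n^2 = 2\sum_{\chi\in\F(q)}\sum_{0<\gamma_\chi}\frac{1}{1/4+\gamma_\chi^2} + O(|\F(q)|) = |\F(q)| L(q)(1+o(1))$ by \eqref{equation sum over zeros B(chi)}, so the first part of Theorem \ref{theorem mood} yields
$$ \P[Y\ge |\F(q)|] \le \exp\!\Big(-\tfrac1{16}|\F(q)|^2 \big(|\F(q)|L(q)(1+o(1))\big)^{-1}\Big) \le \exp\!\Big(-a_3\frac{|\F(q)|}{L(q)}\Big). $$

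For the lower bound I need the complementary hypothesis $\sum_{|r_n|\ge\alpha}|r_n| \ge 2V = 2|\F(q)|$, which with the bounded-$\alpha$ choice above may fail, so here I would instead take $\alpha$ small, say $\alpha$ a fixed small constant is not enough; I would pick $1/\alpha$ a large absolute constant $T_0$ so that $\sum_{\chi\in\F(q)}\sum_{0<\gamma_\chi<T_0}\frac{1}{\sqrt{1/4+\gamma_\chi^2}} \ge 2|\F(q)|$ — this is possible because by Lemma \ref{lemma asymptotic for sum of zeros} this inner sum is $\gg \log q^* \log T_0 \ge \tfrac{\log 2}{2}\log T_0$ uniformly, so choosing $T_0$ with $\tfrac{\log 2}{2}\log T_0 \ge 2$ (times the implied constant) suffices; crucially $T_0$ is absolute. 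Then $\sum_{|r_n|<\alpha}r_n^2 \ll \sum_{\chi\in\F(q)}(\log q^* + \log T_0) \asymp |\F(q)|L(q)$ still holds, and the second part of Theorem \ref{theorem mood} gives $\P[Y\ge|\F(q)|] \ge a_1\exp(-a_2 |\F(q)|/L(q))$. Finally I would record the bound $L(q)\ge \frac{\log 2}{2}$, which follows because $\log q^* \ge \log 3 > \log 2$ for any modulus with a real primitive character (or more carefully, the relevant conductors $q^*$ are $\ge 3$ except possibly $q^*\in\{1,4,8\}$ where one still has $\log q^* \ge \log 4 > \log 2$, and $q^*=1$ is excluded since $\chi\ne\chi_0$), so averaging gives $L(q)\ge \tfrac{\log 2}{2}$ with room to spare. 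The main obstacle is the bookkeeping in choosing $\alpha$ (equivalently the truncation height $T_0$) uniformly in $q$ so that \emph{both} inequalities $\sum_{|r_n|\ge\alpha}|r_n|\le V/2$ and (in the other regime) $\ge 2V$ can be arranged by absolute constants while keeping $\sum_{|r_n|<\alpha}r_n^2 \asymp |\F(q)|L(q)$; this requires the two-variable asymptotic of Lemma \ref{lemma asymptotic for sum of zeros} rather than a crude zero count, and care that the implied constants in Theorem \ref{theorem mood} depend only on $c=\tfrac12$.
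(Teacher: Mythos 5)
There are two genuine gaps, both in verifying the hypotheses of Theorem~\ref{theorem mood}.

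\textbf{Upper bound.} Your claim that $\sum_{0<\gamma_\chi<T}\frac{1}{\sqrt{1/4+\gamma_\chi^2}}$ is ``uniformly bounded for bounded $T$'' is false: by the Riemann--von Mangoldt formula there are $\asymp\log q^*$ zeros with $0<\gamma_\chi<1$, each contributing an amount between about $0.9$ and $2$, so this sum is $\asymp\log q^*$ and grows with $q$. Consequently, for any fixed $\alpha$ strictly below the maximum of $|r_n|$, the quantity $\sum_{|r_n|\ge\alpha}|r_n|$ is $\asymp|\F(q)|\log q^*$, which exceeds $|\F(q)|/2$ for large $q$. The way out is what the paper does: take $\alpha$ equal to the supremum of $|r_n|$, namely $\alpha=4$ (since $|r_n|=2/\sqrt{1/4+\gamma_\chi^2}<4$ for every $\gamma_\chi>0$), so that the set $\{n:|r_n|\ge\alpha\}$ is \emph{empty} and the hypothesis $\sum_{|r_n|\ge\alpha}|r_n|\le V/2$ holds trivially, with $\sum_{|r_n|<\alpha}r_n^2=\sum_n r_n^2\asymp|\F(q)|L(q)$ by \eqref{equation sum over zeros B(chi)}.

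\textbf{Lower bound.} After choosing $T_0$ absolute so that $\sum_{|r_n|\ge\alpha}|r_n|\ge 2|\F(q)|$ (this part of your argument is fine), you assert $\sum_{|r_n|<\alpha}r_n^2\asymp|\F(q)|L(q)$ but only justify the direction $\ll$. The bound you actually need to make the conclusion of Theorem~\ref{theorem mood} useful is $\sum_{|r_n|<\alpha}r_n^2\gg|\F(q)|L(q)$, i.e.\ a \emph{lower} bound on the high-frequency energy, since the exponent $-a_2V^2/\sum_{|r_n|<\alpha}r_n^2$ must be $\ge -a_2'|\F(q)|/L(q)$. This is not automatic: a priori, all the variance could hide below height $T_0$. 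The paper closes this by fixing $T_1>T_0$ with $N(2T_1,\chi)-N(T_1,\chi)\gg\log q^*$ (von Mangoldt), so the block $T_1<\gamma_\chi<2T_1$ alone already contributes $\gg\log q^*$ to $\sum_{\gamma_\chi>T_0}\frac{4}{1/4+\gamma_\chi^2}$, giving the needed $\gg|\F(q)|L(q)$. You should add this step.

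Two smaller remarks: the ``split into real and imaginary coordinates'' step is a red herring --- $\Re(Z_{\gamma_\chi})$ is already a real random variable with mean $0$, $|\cdot|\le1$, and variance $1/2$, so one simply takes $Y_n=\Re(Z_{\gamma_\chi})$, $r_n=2/\sqrt{1/4+\gamma_\chi^2}$, and $c=1/2$ in Theorem~\ref{theorem mood}, with no duplication of coefficients. Your justification of $L(q)\ge(\log 2)/2$ via $q^*\ge3$ for nonprincipal primitive characters is correct.
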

\begin{proof}
It is a direct application of Theorem \ref{theorem mood}. Taking the sequence $\{r_i\}$ to be the $\frac 2{\sqrt{\frac 14+\gamma_{\chi}^2}}$ ordered by size, we have for $0<\alpha \leq 4$ that
$$ \sum_{|r_n|\geq \alpha}|r_n| = \sum_{\chi \in \F(q)}\sum_{\substack{ 0<\gamma_{\chi} \leq \sqrt{\frac {4}{\alpha^2}-\frac 14}}} \frac 2{\sqrt{\frac 14+\gamma_{\chi}^2}}, \hspace{1cm}\sum_{|r_n|>\alpha}|r_n|^2= \sum_{\chi \in \F(q)}\sum_{\substack{  \gamma_{\chi} > \sqrt{\frac 4{\alpha^2}-\frac 14}}} \frac 4{\frac 14+\gamma_{\chi}^2}. $$
For the upper bound we take $\alpha=4$: then we trivially have $\sum_{|r_n|\geq \alpha}|r_n|\leq |\F(q)|/2$, so 
$$ \P[Y\geq |\F(q)| ] \leq \exp \left(-\frac 1{16} |\F(q)|^2 \left(c_1 \sum_{\chi \in \F(q)} \log q^*\right)^{-1}  \right) $$
for some absolute constant $c_1$. For the lower bound we take $\alpha=2/\sqrt{\frac 14+T_0^2}$, where $T_0>1$ is a fixed large real number (independent of $q$ and $\mathcal F(q)$) such that 
$$ \sum_{\chi \in \F(q)} \sum_{\substack{|\gamma_{\chi}| \leq T_0 }} \frac 1{\sqrt{\frac 14+\gamma_{\chi}^2}} \geq \frac{4}{\log 2} L(q)|\F(q)| \geq 2|\F(q)|, $$
whose existence is granted by Lemma \ref{lemma asymptotic for sum of zeros} (we grouped together conjugate characters). Then Theorem \ref{theorem mood} gives the bound
\begin{align*}
\P[Y\geq |\F(q)| ] &\geq c_2 \exp \left(-c_3 |\F(q)|^2 \left(\sum_{\chi\in\F(q)} \sum_{\gamma_{\chi}>T_0}\frac 4{\frac 14+\gamma^2}\right)^{-1}  \right)
\\ &\geq c_2 \exp \left(-c_3 |\F(q)|^2 \left(c_4 \sum_{\chi \in \F(q)} \log q^*\right)^{-1}  \right)  
\end{align*}
for $q$ large enough and some absolute constants $c_2,c_3$ and $c_4$, since if we choose $T_1>T_0$ independent of $\chi$ and large enough such that $N(2T_1,\chi)-N(T_1,\chi) \gg \log q^*$ (this is possible by the von-Mangoldt formula), then we have 
\begin{align*}\sum_{\chi\in\F(q)} \sum_{\gamma_{\chi}>T_0}\frac 4{\frac 14+\gamma^2} \geq \sum_{\chi\in\F(q)} \sum_{T_1 <\gamma_{\chi}<2T_1}\frac 4{\frac 14+\gamma^2} &\geq \sum_{\chi\in\F(q)} \frac 4{\frac 14+(2T_1)^2} (N(2T_1,\chi)-N(T_1,\chi)) \\
& \gg   \sum_{\chi\in\F(q)} \log q^*.
\end{align*}

\end{proof}

\begin{proof}[Proof of Theorem \ref{theorem more precise large deviation}]
Let $X_q$ be the random variable in \eqref{equation residues versus non residues random variable} and define the symmetric random variable $$Y_{q}:=X_{q}-\E[X_{q}].$$ By Lemma \ref{lemma link with random variable X_q}
\begin{align*}  \delta(q;NR,R) &= \P[X_{q}>0] \\&= \P[Y_{q}>-\E[X_{q}]]  \\
&= \P[Y_{q}<\E[X_{q}]] = 1-\P[Y_{q}\geq \E[X_{q}]].
\end{align*}
 The proof follows by taking $\F(q):=\{ \chi \bmod q: \chi^2=\chi_0, \chi\neq \chi_0\}$ in Lemma \ref{lemma Montgomery bounds} and by estimating $L(q)$ as in the proof of Lemma \ref{lemma variance of X_q}.

\end{proof}

\section{A more general analysis}

\label{section general analysis}

In this section we do a more general analysis by studying arbitrary linear combinations of prime counting functions.

Throughout the section, $\a=(a_1,...,a_k)$ will be a vector of invertible reduced residues $\bmod q$ and $\al=(\alpha_1,...,\alpha_k)$ will be a non-zero vector of real numbers such that $\sum_{i=1}^k\alpha_i=0$. Recall that
$$ \epsilon_i = \begin{cases}
1 & \text{ if }  a_i \equiv \square \bmod q\\
0 & \text{ if } a_i \not\equiv \square \bmod q,
\end{cases}
$$
and we assume without loss of generality that 
$$ \sum_{i=1}^k \epsilon_i \alpha_i <0. $$

To prove theorems \ref{theorem constant coefficients}, \ref{theorem general way of being biased} and  \ref{theorem limitations generales}, we need a few lemmas.
\begin{lemma}
\label{lemma link with random variables general}
Assume GRH and LI. Then the quantity
$$ E(y;q,\a;\al):=\phi(q)\frac{\alpha_1\pi(e^y;q,a_1)+...+\alpha_k\pi(e^y;q,a_k)}{e^{y/2}/y} $$
has the same distribution as that of the random variable
\begin{equation}
X_{q;\a,\al}:=-\rho(q)\sum_{i=1}^k \epsilon_i \alpha_i+\sum_{\chi\neq \chi_0} |\alpha_1\chi(a_1)+...+\alpha_k\chi(a_k)| \sum_{\gamma_{\chi}>0} \frac{2\Re(Z_{\gamma_{\chi}})}{\sqrt{\frac 14+\gamma_{\chi}^2}},
\label{equation random variable general}
\end{equation}    
where the $Z_{\gamma_{\chi}}$ are independent random variables following a uniform distribution on the unit circle in $\mathbb C$.
\end{lemma}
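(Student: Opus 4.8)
The plan is to follow the template of Rubinstein--Sarnak's foundational analysis together with the refinement in \cite{FiMa} (their Proposition 2.3 and its analogues), adapting it to the general linear combination $\alpha_1\pi(e^y;q,a_1)+\dots+\alpha_k\pi(e^y;q,a_k)$. First I would write down the explicit formula for each $\psi(x;q,a_i)$ via orthogonality of Dirichlet characters, and then take the linear combination with weights $\alpha_i$. Because $\sum_i \alpha_i = 0$, the contribution of the principal character (the main term $\mathrm{Li}(x)$-type piece) cancels, leaving only a sum over $\chi \neq \chi_0$. After the standard passage from $\psi$ to $\pi$ (summation by parts, and absorbing the prime-power terms $p^2 \le x$ into a secondary main term), the normalized quantity $E(y;q,\a;\al)$ becomes, under GRH,
$$ E(y;q,\a;\al) = c_0 + \sum_{\chi \neq \chi_0}\Big(\sum_{i=1}^k \alpha_i\chi(a_i)\Big) \sum_{\gamma_\chi} \frac{e^{iy\gamma_\chi}}{\tfrac12 + i\gamma_\chi} + o(1), $$
where the bias constant $c_0$ comes from the $p^2$-term: only real characters $\chi$ (those with $\chi^2 = \chi_0$) contribute there, and $\sum_{\chi^2=\chi_0,\chi\neq\chi_0}\chi(a_i) = \rho(q)\epsilon_i - 1$, so after using $\sum_i\alpha_i=0$ one gets $c_0 = -\rho(q)\sum_{i=1}^k \epsilon_i\alpha_i$, matching the first term in \eqref{equation random variable general}.

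Next I would invoke the Rubinstein--Sarnak machinery (as packaged in Lemma \ref{lemma limiting distribution}) to conclude that $E(y;q,\a;\al)$ has a limiting logarithmic distribution, and that under LI the distribution is obtained by replacing the oscillating factors by independent random phases. Concretely, pairing each zero $\tfrac12+i\gamma_\chi$ (with $\gamma_\chi>0$) of $L(s,\chi)$ with the conjugate zero $\tfrac12-i\gamma_\chi$ of $L(s,\overline\chi)$, and grouping $\chi$ with $\overline\chi$, the inner double sum over a conjugate pair collapses to $2\Re\big((\alpha_1\chi(a_1)+\dots+\alpha_k\chi(a_k)) Z_{\gamma_\chi}\big)/\sqrt{\tfrac14+\gamma_\chi^2}$ for a random variable $Z_{\gamma_\chi}$ uniform on the unit circle; LI guarantees these are independent across all the distinct $\gamma_\chi$. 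Finally, since $Z_{\gamma_\chi}$ is rotation-invariant, the complex coefficient $\alpha_1\chi(a_1)+\dots+\alpha_k\chi(a_k)$ may be replaced by its modulus $|\alpha_1\chi(a_1)+\dots+\alpha_k\chi(a_k)|$ without changing the distribution, which yields exactly \eqref{equation random variable general}. One should also note that when $\chi$ is real, $\chi=\overline\chi$, so the pairing is of $\gamma$ with $-\gamma$ within the same $L$-function; the argument is the same, and the coefficient $\alpha_1\chi(a_1)+\dots+\alpha_k\chi(a_k)$ is then real, so taking absolute values is again harmless.

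I expect the main obstacle to be the bookkeeping in two places: first, verifying carefully that the secondary main term arising from prime squares produces precisely the constant $-\rho(q)\sum_i\epsilon_i\alpha_i$ and nothing else survives (one must check that contributions from $\chi$ with $\chi^2=\chi_0$ but $\chi=\chi_0$ are killed by $\sum_i\alpha_i=0$, and that higher prime powers $p^m$, $m\ge3$, contribute only $o(1)$ after normalization); and second, justifying the reduction to independent random phases with the correct grouping of conjugate characters and conjugate zeros, so that the real and imaginary parts combine into a single $2\Re(\cdot)$ term rather than being double-counted. Both of these are handled in \cite{RubSar} and \cite{FiMa} in closely related settings, so I would cite Proposition 2.3 of \cite{FiMa} for the structure of the argument and only indicate the modifications needed to accommodate arbitrary real coefficients $\alpha_i$ summing to zero. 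The convergence issues (absolute convergence of $\sum_{\gamma_\chi}(\tfrac14+\gamma_\chi^2)^{-1/2}\cdot|\cdots|$ is only conditional, but $\sum_{\gamma_\chi}(\tfrac14+\gamma_\chi^2)^{-1}<\infty$ suffices for the $B^2$-almost-periodicity and hence for the existence of the limiting distribution) are routine given Lemma \ref{lemma limiting distribution}.
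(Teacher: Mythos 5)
Your proposal is correct and follows essentially the same route as the paper's proof: apply the explicit formula to $\phi(q)\sum_i\alpha_i\psi(e^y;q,a_i)/e^{y/2}$, use $\sum_i\alpha_i=0$ to kill the principal-character main term, pass from $\psi$ to $\pi$ by partial summation and identify the secondary term from prime squares (which under GRH is governed by the real characters and yields exactly the constant $-\rho(q)\sum_i\epsilon_i\alpha_i$), and then invoke the Rubinstein--Sarnak machinery under LI to replace the almost-periodic oscillation by independent uniform phases, using rotation invariance of $Z_{\gamma_\chi}$ to replace $\alpha_1\chi(a_1)+\cdots+\alpha_k\chi(a_k)$ by its modulus. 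The paper carries out precisely these three steps (compressing the last one into a citation of \cite{RubSar} and of Proposition 2.3 of \cite{FiMa}), so your argument matches.
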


\begin{remark}
\label{remark back to particular case}
If we take $a_1,...,a_{\phi(q)(1-\rho(q)^{-1})}$ to be the set of all quadratic non-residues $\bmod q$ with $\alpha_1=...=\alpha_{\phi(q)(1-\rho(q)^{-1})}=\frac 1{\phi(q)}$, and we take $a_{\phi(q)(1-\rho(q)^{-1})+1},...,a_{\phi(q)}$ to be the set of all quadratic residues $\bmod q$ with $\alpha_{\phi(q)(1-\rho(q)^{-1})+1}=...=\alpha_{\phi(q)}=\frac{1-\rho(q)}{\phi(q)}$, then we recover the formula \eqref{equation residues versus non residues random variable}.

\end{remark}
\begin{proof}

In the same way as in the proof of Lemma \ref{lemma link with random variable X_q}, we get by the explicit formula and by applying GRH that

\begin{align*}
F(y;q,\a,\al):&= \phi(q)\frac{\alpha_1\psi(e^y;q,a_1)+...+\alpha_k\psi(e^y;q,a_k)}{e^{y/2}} \\&=-\sum_{\chi\neq \chi_0} (\alpha_1\overline{\chi}(a_1)+...+\alpha_k\overline{\chi}(a_k)) \sum_{\gamma_{\chi}} \frac{e^{i\gamma_{\chi}y}}{\rho_{\chi}}+o_q(1),
\end{align*}
(the main terms cancel since $\sum_{i=1}^k\alpha_i=0$). By the work of Rubinstein and Sarnak \cite{RubSar}, $F(y;q,\a,\al)$ has the same distribution as $X_{q;\a,\al}-\E[X_{q;\a,\al}]$, since LI implies that there are no real zeros. The second step is to use summation by parts and to remove squares and other prime powers; this gives that
$$ E(y;q,\a,\al)+\rho(q)\sum_{i=1}^k \epsilon_i \alpha_i+o(1)=F(y;q,\a,\al),$$
completing the proof.
\end{proof}

Before we give a bound on the variance of this distribution, we prove a lemma about conductors.

\begin{lemma}
\label{lemma small conductors}
Let $1\leq L\leq \phi(q)$. Then,
$$ \# \{\chi \bmod q : q^* \leq L \} \leq \min\{ L\tau(q),L^2\}.$$
\end{lemma}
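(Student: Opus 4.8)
The plan is to bound the number of Dirichlet characters $\chi\bmod q$ whose conductor $q^*$ is at most $L$ by grouping them according to which divisor $d\mid q$ serves as the conductor, and then counting primitive characters of each modulus $d$. Recall that every character $\chi\bmod q$ is induced by a unique primitive character $\chi^*$ of some modulus $d\mid q$, and that modulus $d$ is precisely the conductor $q^*$. So the quantity we must estimate is
\[
\#\{\chi\bmod q : q^*\leq L\} \;=\; \sum_{\substack{d\mid q\\ d\leq L}} \phi^*(d),
\]
where $\phi^*(d)$ denotes the number of primitive characters modulo $d$.

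For the first bound, $L\tau(q)$: I would use the trivial inequality $\phi^*(d)\leq \phi(d)\leq d\leq L$ for each $d\leq L$, so the sum is at most $L$ times the number of divisors $d$ of $q$ with $d\leq L$, which is at most $L\,\tau(q)$. For the second bound, $L^2$: here I would instead note that $\sum_{d\mid q,\, d\leq L}\phi^*(d) \leq \sum_{d\leq L}\phi^*(d) \leq \sum_{d\leq L}\phi(d)$, dropping the divisibility constraint entirely, and then use the elementary estimate $\sum_{d\leq L}\phi(d)\leq \sum_{d\leq L} d \leq L^2$ (or the sharper $\tfrac{3}{\pi^2}L^2 + O(L\log L)$, though the crude bound $L^2$ is all that is claimed). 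Taking the minimum of the two bounds gives the statement.

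The only mild subtlety is making sure the indexing is right: one should be careful that $\sum_{d\mid q}\phi^*(d)=\phi(q)$, which confirms that summing primitive-character counts over divisors of $q$ recovers all characters mod $q$, and that restricting to $d\leq L$ correctly captures the condition $q^*\leq L$. There is no real obstacle here — this is a packaging lemma whose proof is two short counting arguments — but if I wanted the cleanest writeup I would simply present both chains of inequalities on one line each and conclude with the $\min$. No deep input (no GRH, no zero estimates) is needed.
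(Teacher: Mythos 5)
Your proof is correct and follows essentially the same route as the paper: both group characters $\chi\bmod q$ by their conductor $q^*=d\mid q$, reduce the count to $\sum_{d\mid q,\ d\leq L}\phi^*(d)$, and then bound this sum in the two ways you describe (dropping the constraint $d\leq L$ to get $L\tau(q)$, or dropping the constraint $d\mid q$ and summing $d\leq L$ to get $L^2$). The paper simply states these two bounds without spelling out the elementary chains of inequalities you provide.
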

\begin{proof}
Denoting by $\phi^*(d)$ the number of primitive characters $\bmod q$, we have
$$ \sum_{\substack{d\mid q \\ d\leq L}} \phi^*(d) \leq \min \left\{   \sum_{\substack{d\leq L}} d, L \sum_{\substack{d\mid q }} 1 \right\}.  $$
\end{proof}

\begin{lemma}
\label{lemma bounds on variance general}
Assume LI. Let $V(q;\a,\al):=\V[X_{q;\a,\al}]$, where $X_{q;\a,\al}$ is the random variable defined in \eqref{equation random variable general}. Then,
\begin{equation}
 \phi(q) \norm{\a}^2_2 \log \left(\frac {3\phi(q)}k\right) \ll  V(q;\a,\al) \ll \phi(q) \norm{\a}^2_2 \log q,
 \label{equation bounds on the variance general}
\end{equation}
where 
$$ \norm{\a}^2_2 := \sum_{i=1}^k \alpha_i^2.$$
\end{lemma}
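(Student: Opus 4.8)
The plan is to compute $V(q;\a,\al)$ directly from the representation \eqref{equation random variable general}. Since the $Z_{\gamma_\chi}$ are independent with $\V[\Re(Z_{\gamma_\chi})]=\tfrac12$ and the zeros come in conjugate pairs for real $\chi$ but not for general $\chi$, the variance equals
$$ V(q;\a,\al) = \sum_{\chi\neq\chi_0} \abs{\alpha_1\chi(a_1)+\dots+\alpha_k\chi(a_k)}^2 \sum_{\gamma_\chi} \frac{1}{\tfrac14+\gamma_\chi^2}. $$
For the inner sum over zeros I would invoke \eqref{equation sum over zeros B(chi)} (Lemma 3.5 of \cite{FiMa}): under GRH it is $\log q^* + O(\log\log q^*)$, where $q^*$ is the conductor of $\chi$. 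So the problem reduces to estimating $\sum_{\chi\neq\chi_0} \abs{\sum_i \alpha_i\chi(a_i)}^2 \log q^*$ from above and below, where the main term uses $\log q^*$ and the error is controlled by $\sum_{\chi}\abs{\sum_i\alpha_i\chi(a_i)}^2\log\log q^* \ll \phi(q)\norm{\a}_2^2\log\log q$ after the orthogonality step below.

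For the \emph{upper bound}, I would bound $\log q^* \le \log q$ for every $\chi$, pull it out, and apply the orthogonality relations for Dirichlet characters: $\sum_{\chi\bmod q}\abs{\sum_i\alpha_i\chi(a_i)}^2 = \phi(q)\sum_{i,j:\,a_i\equiv a_j}\alpha_i\alpha_j$. Since the $a_i$ are distinct reduced residues, the only surviving terms are $i=j$, giving exactly $\phi(q)\norm{\a}_2^2$; subtracting the $\chi_0$ contribution $\abs{\sum_i\alpha_i}^2 = 0$ (because $\sum_i\alpha_i=0$) changes nothing. This yields $V \ll \phi(q)\norm{\a}_2^2\log q$. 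For the \emph{lower bound} I would instead discard all characters of large conductor: write $V(q;\a,\al) \gg \sum_{\chi:\,q^*>L}\abs{\sum_i\alpha_i\chi(a_i)}^2\log q^* \ge (\log L)\sum_{\chi:\,q^*>L}\abs{\sum_i\alpha_i\chi(a_i)}^2$ for a threshold $L$ to be chosen, and then bound the tail sum below by the full sum minus the small-conductor part:
$$ \sum_{\chi:\,q^*>L}\Big|\sum_i\alpha_i\chi(a_i)\Big|^2 \ge \phi(q)\norm{\a}_2^2 - \sum_{\chi:\,q^*\le L}\Big|\sum_i\alpha_i\chi(a_i)\Big|^2. $$
The subtracted sum is at most $\#\{\chi:\,q^*\le L\}\cdot k\norm{\a}_2^2$ by Cauchy–Schwarz (the inner character sum has $k$ terms), and Lemma \ref{lemma small conductors} gives $\#\{\chi:\,q^*\le L\}\le L^2$. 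Choosing $L\asymp\sqrt{\phi(q)/k}$ — more precisely $L = c\sqrt{\phi(q)/k}$ with $c$ a small absolute constant — makes the subtracted sum at most $\tfrac12\phi(q)\norm{\a}_2^2$, so the tail sum is $\gg\phi(q)\norm{\a}_2^2$, and $\log L \gg \log(3\phi(q)/k)$ (the shift by $3$ handles $k$ close to $\phi(q)$, where $\log L$ could otherwise be negative; in that regime one argues directly that $V\gg\phi(q)\norm{\a}_2^2$ using any single nonprincipal character of bounded conductor, whose contribution to $\abs{\sum_i\alpha_i\chi(a_i)}^2$ summed over a few small-conductor characters already recovers a positive proportion of $\phi(q)\norm{\a}_2^2$). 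This gives the claimed lower bound.

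The main obstacle is the lower bound: one cannot simply keep the full character sum with its $\log q^*$ weights, because characters of tiny conductor (in the extreme, conductor $O(1)$) contribute essentially nothing to $\sum\log q^*$ yet could in principle carry a large share of the $\ell^2$-mass $\sum_\chi\abs{\sum_i\alpha_i\chi(a_i)}^2$. The fix is precisely the conductor-counting input of Lemma \ref{lemma small conductors} combined with the trivial bound $\abs{\sum_i\alpha_i\chi(a_i)}^2\le k\norm{\a}_2^2$: there are too few small-conductor characters (at most $L^2 \le \tfrac12\phi(q)/\norm{\text{(factor)}}$ after the choice of $L$) for them to absorb more than half of the total mass, so the remaining mass sits on characters with $q^* > L$, all of which carry weight $\log q^* > \log L \gg \log(3\phi(q)/k)$. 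Everything else is a routine application of character orthogonality and the GRH estimate \eqref{equation sum over zeros B(chi)} for sums over zeros.
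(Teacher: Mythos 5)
Your approach is essentially the same as the paper's: reduce the variance to $\sum_{\chi\neq\chi_0}|\sum_i\alpha_i\chi(a_i)|^2\sum_{\gamma_\chi}(\tfrac14+\gamma_\chi^2)^{-1}$, replace the inner sum by $\asymp\log q^*$ via \eqref{equation sum over zeros B(chi)}, apply orthogonality, and for the lower bound truncate away small-conductor characters using Lemma~\ref{lemma small conductors}. The only differences are cosmetic: you choose $L\asymp\sqrt{\phi(q)/k}$ rather than the paper's $L=(3\phi(q)/k)^{1/3}$, and you bound the small-conductor mass by $\#\{\chi:q^*\le L\}\cdot k\norm{\a}_2^2$ via a single Cauchy--Schwarz rather than expanding into diagonal and off-diagonal character sums as the paper does; both yield the same $\gg\log(3\phi(q)/k)$ after the truncation.

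One point in your parenthetical is wrong as stated. When $\phi(q)/k$ is bounded and $L$ would fall below $1$, you cannot argue that ``a few small-conductor characters already recover a positive proportion of $\phi(q)\norm{\a}_2^2$'': for $q$ prime there are no nonprincipal characters of small conductor at all, and even when such characters exist the sum $\sum_i\alpha_i\chi(a_i)$ could vanish for every one of them. The correct observation (and what the paper uses) is simpler: every nonprincipal $\chi$ has $q^*\ge 3$, so $\sum_{\gamma_\chi}(\tfrac14+\gamma_\chi^2)^{-1}\gg 1$ uniformly, whence $V(q;\a,\al)\gg\sum_{\chi\neq\chi_0}|\sum_i\alpha_i\chi(a_i)|^2=\phi(q)\norm{\a}_2^2$, which is $\gg\phi(q)\norm{\a}_2^2\log(3\phi(q)/k)$ whenever $\phi(q)/k$ is bounded. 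With that repair your argument is complete.
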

\begin{remark}
The upper bound in \eqref{equation bounds on the variance general} is attained when $q$ is prime by Lemma \ref{lemma exact expression for the variance general}. As for the lower bound, if we take moduli $q$ with a bounded number of distinct prime factors and consider the race between residues and non-residues with the weights of Remark \ref{remark back to particular case}, we obtain by Lemma \ref{lemma variance of X_q} that $V(q;\a,\al) =O(1)$, and this is of the same order of magnitude as the lower bound in \eqref{equation bounds on the variance general}.
\end{remark}
\begin{proof}
Since the $Z_{\gamma_{\chi}}$ in \eqref{equation random variable general} are independent and have variance equal to $\frac 12$, we have that 
$$\V[X_{q;\a,\al}] = \sum_{\chi\neq \chi_0} |\alpha_1\chi(a_1)+...+\alpha_k\chi(a_k)|^2 \sum_{\gamma_{\chi}} \frac{1}{\frac 14+\gamma_{\chi}^2}. $$
(LI implies that there are no real zeros.)
By \eqref{equation sum over zeros B(chi)}, there exists $q_0$ such that whenever $q^*\geq q_0$ we have
\begin{equation}
\sum_{\gamma_{\chi}} \frac{1}{\frac 14+\gamma_{\chi}^2} \asymp \log q^*,
\label{equation pre variance}
\end{equation}
and the same estimate clearly holds for $q^*<q_0$, since the left hand side of \eqref{equation pre variance} is positive. We conclude that

\begin{equation}
V(q;\a,\al) \asymp \sum_{\chi\neq \chi_0} |\alpha_1\chi(a_1)+...+\alpha_k\chi(a_k)|^2 \log q^*.
\label{equation first approximation to variance general}
\end{equation}  
Now, $\alpha_1\chi(a_1)+...+\alpha_k\chi(a_k)=0$, so
\begin{align*}
 \sum_{\chi\neq \chi_0} |\alpha_1\chi(a_1)+...+\alpha_k\chi(a_k)|^2 &=\sum_{\chi\bmod q} |\alpha_1\chi(a_1)+...+\alpha_k\chi(a_k)|^2 \\
 &= \sum_{1\leq i,j \leq k} \alpha_i \alpha_j \sum_{\chi\bmod q} \chi(a_ia_j^{-1}) \\
 &=\phi(q) \sum_{i=1}^k \alpha_i^2. 
\end{align*}

Using this and \eqref{equation first approximation to variance general}, the upper bound follows from the fact that $\log q^*\leq \log q$. This also gives the lower bound $V(q;\a,\al)\geq \log 3 \phi(q) \norm{\al}_2^2$, which proves the claim for bounded values of $\phi(q)/k$. Hence we assume from now on that $\phi(q)/k\geq 576$. We fix a parameter $1<L<\phi(q)$ and discard the characters of conductor at most $L$:
\begin{align*}
 V(q;\a,\al) &\geq \log L \sum_{\substack{\chi \bmod q: \\ q^* > L}} |\alpha_1\chi(a_1)+...+\alpha_k\chi(a_k)|^2 \\
 & = \log L \sum_{1\leq i,j\leq k} \alpha_i \alpha_j  \sum_{\substack{\chi \bmod q: \\ q^* > L}} \chi(a_ia_j^{-1}) \\
 &= \log L \left[ \sum_{i=1}^k \alpha_i^2 \sum_{\substack{\chi\bmod q \\ q^*> L}} 1 + \sum_{1\leq i\neq j \leq k} \alpha_i\alpha_j\sum_{\substack{\chi \bmod q: \\ q^* > L}} \chi(a_ia_j^{-1})  \right],
 \end{align*}
which by Lemma \ref{lemma small conductors} and the orthogonality relations is
\begin{align*} &\geq \log L \left[ \sum_{i=1}^k \alpha_i^2 (\phi(q)-\min\{ L\tau(q),L^2\})  - \sum_{1\leq i\neq j \leq k} |\alpha_i\alpha_j| \min\{ L\tau(q),L^2\}  \right] \\
& \geq \log L \norm{\al}_2^2 \left[ \phi(q)-(k+1)\min\{ L\tau(q),L^2\}  \right]
\end{align*}
by the Cauchy-Schwartz inequality. Taking $L:=(3\phi(q)/k)^{\frac 13}$ gives the result, since then $\phi(q)/k\geq 576$ implies that $(k+1)L^2 \leq \phi(q)/2$.

\end{proof}
\begin{remark}
\label{remark lemma variance general}
In the last proof, we did not lose a lot by discarding the characters of conductor at most $(3\phi(q)/k)^{\frac 13}$, since by Lemma \ref{lemma small conductors} and the Cauchy-Schwartz inequality, their contribution is 
$$ \ll \phi(q) \norm{\al}_2^2 \log \left(\frac{3\phi(q)}k\right).$$

\end{remark}

\begin{proof}[Proof of Theorem \ref{theorem general way of being biased}]
We have by Lemma \ref{lemma bounds on variance general} that there exists an absolute constant $c>0$ such that
$$ B(q;\a,\al):= \frac{\E[X_{q;\a,\a}]}{\sqrt{\V[X_{q;\a,\a}]}} \geq \frac{\rho(q) \left|\sum_{i=1}^k \epsilon_i\alpha_i\right|}{ \sqrt{c\phi(q) \log q \sum_{i=1}^k \alpha_i^2 }},$$
a quantity which is greater or equal to $(c\epsilon)^{-\frac 12}$ by the condition of the theorem. We conclude that $1-\delta(q;\a,\al)\leq c \epsilon$ by using Chebyshev's bound in the same way as in the proof of Theorem \ref{theorem extreme Dirichlet races}.

\end{proof}

\begin{proof}[Proof of Theorem \ref{theorem constant coefficients}]
It is a particular case of Theorem \ref{theorem general way of being biased}.
\end{proof}

We now prove our negative results. To do so, we need to provide a central limit theorem, analogous to Lemma \ref{lemma clt}.
\begin{lemma}
Let 
$$Y_{q;\a,\al}:=\frac {X_{q;\a,\al}-\E[X_{q;\a,\al}]}{\sqrt{\V[X_{q;\a,\al}]}}.$$
The characteristic function of $Y_{q;\a,\al}$ satisfies
$$ \hat Y_{q;\a,\al}(\xi)= -\frac{\xi^2}2+O\left( \frac{\xi^4}{\log(3\phi(q)/k)} \min\left\{ 1,\frac{k^2\log q}{\phi(q) \log (3\phi(q)/k) } \right\}\right) $$
in the range $|\xi|\leq \frac 3{5 \norm{\al}_1}$, where $\norm{\al}_1:=\sum_{i=1}^k |\alpha_i|$.
\label{lemma clt general}
\end{lemma}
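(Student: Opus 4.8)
The plan is to follow the proof of Lemma \ref{lemma clt} essentially verbatim, the two new ingredients being the character weights $c_\chi:=|\alpha_1\chi(a_1)+\cdots+\alpha_k\chi(a_k)|$ and the need to keep track of the dependence on $k$. First I would record the product formula for the (logarithm of the) characteristic function: since $2\Re(Z_{\gamma_\chi})$ has characteristic function $\xi\mapsto J_0(2\xi)$ and the $Z_{\gamma_\chi}$ are independent, writing $V:=\V[X_{q;\a,\al}]$ and using that $Y_{q;\a,\al}$ is centred, one gets
$$ \hat Y_{q;\a,\al}(\xi)=\sum_{\chi\neq\chi_0}\sum_{\gamma_\chi>0}\log J_0\!\left(\frac{2\xi\, c_\chi}{\sqrt V\,\sqrt{\tfrac14+\gamma_\chi^2}}\right). $$
Because $0\le \tfrac{2c_\chi}{\sqrt{1/4+\gamma_\chi^2}}\le 4c_\chi\le 4\norm{\al}_1$, in the indicated range of $\xi$ every argument of $J_0$ above has modulus at most $\tfrac{12}5$, so the Taylor expansion \eqref{equation Taylor expansion of Bessel}, $\log J_0(u)=-\tfrac{u^2}4+O(u^4)$, may be inserted termwise. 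Using $\V[2\Re Z]=2$ — so that $\sum_{\chi\neq\chi_0}\sum_{\gamma_\chi>0}\tfrac{c_\chi^2}{1/4+\gamma_\chi^2}=V/2$ by the very definition of $V$ — this produces the main term $-\xi^2/2$ together with an error $O\!\big(\tfrac{\xi^4}{V^2}S\big)$, where $S:=\sum_{\chi\neq\chi_0}\sum_{\gamma_\chi>0}\tfrac{c_\chi^4}{(1/4+\gamma_\chi^2)^2}$.

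Next I would bound $S$ in two complementary ways, one for each entry of the minimum. For the first, the pointwise inequality $\tfrac{c_\chi^2}{1/4+\gamma_\chi^2}\le 4c_\chi^2\le 4\norm{\al}_1^2$ gives $S\le 4\norm{\al}_1^2\sum_{\chi\neq\chi_0}\sum_{\gamma_\chi>0}\tfrac{c_\chi^2}{1/4+\gamma_\chi^2}=2\norm{\al}_1^2 V$, so $\tfrac{\xi^4}{V^2}S\ll \tfrac{\xi^4\norm{\al}_1^2}{V}$; invoking the lower bound $V\gg \phi(q)\norm{\al}_2^2\log(3\phi(q)/k)$ of Lemma \ref{lemma bounds on variance general}, together with $\norm{\al}_1^2\le k\norm{\al}_2^2$ and $k\le\phi(q)$, turns this into $O\!\big(\tfrac{\xi^4}{\log(3\phi(q)/k)}\big)$. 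For the second, I would instead write $S\le\big(\sum_{\chi\neq\chi_0}c_\chi^4\big)\cdot\sup_{\chi\neq\chi_0}\sum_{\gamma_\chi>0}(1/4+\gamma_\chi^2)^{-2}$, bound $\sum_{\gamma_\chi>0}(1/4+\gamma_\chi^2)^{-2}\le 4\sum_{\gamma_\chi}(1/4+\gamma_\chi^2)^{-1}\ll\log q$ via \eqref{equation sum over zeros B(chi)}, and bound $\sum_{\chi\neq\chi_0}c_\chi^4\le(\sup_\chi c_\chi^2)\sum_{\chi\bmod q}c_\chi^2\le k\norm{\al}_2^2\cdot\phi(q)\norm{\al}_2^2$, using $c_\chi^2\le\norm{\al}_1^2\le k\norm{\al}_2^2$ and the orthogonality identity $\sum_{\chi\bmod q}c_\chi^2=\phi(q)\norm{\al}_2^2$ (note $c_{\chi_0}=0$ since $\sum_i\alpha_i=0$) recorded in the proof of Lemma \ref{lemma bounds on variance general}. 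This gives $S\ll k\phi(q)\norm{\al}_2^4\log q$, and dividing by $V^2\gg\phi(q)^2\norm{\al}_2^4\log^2(3\phi(q)/k)$ yields $O\!\big(\tfrac{\xi^4 k\log q}{\phi(q)\log^2(3\phi(q)/k)}\big)$, which is at most the claimed $O\!\big(\tfrac{\xi^4 k^2\log q}{\phi(q)\log^2(3\phi(q)/k)}\big)$. Taking the smaller of the two estimates gives the stated error term.

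I do not expect a genuine conceptual obstacle, since everything runs parallel to Lemma \ref{lemma clt}; the two places that need care are (i) checking that the Bessel arguments stay in the region $|u|\le\tfrac{12}5$ throughout the indicated range of $\xi$, which is exactly where the crude bound $c_\chi\le\norm{\al}_1$ enters, and (ii) choosing the correct way to estimate the fourth-order sum $S$ in each regime of $k$ relative to $\phi(q)$, for which one needs the lower variance bound of Lemma \ref{lemma bounds on variance general} in tandem with the elementary estimates $c_\chi\le\norm{\al}_1\le\sqrt k\,\norm{\al}_2$ and $\sum_{\chi\bmod q}c_\chi^2=\phi(q)\norm{\al}_2^2$. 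If one wanted sharper constants one could bring in Lemma \ref{lemma small conductors} to discard the small-conductor characters, exactly as in Lemma \ref{lemma bounds on variance general}, but the crude bounds above already suffice for the stated estimate.
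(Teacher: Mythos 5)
Your proof is correct and follows the same general strategy as the paper's: write $\log\hat Y_{q;\a,\al}$ as a double sum of $\log J_0$ terms over pairs $(\chi,\gamma_\chi)$, insert the Taylor expansion $\log J_0(u)=-u^2/4+O(u^4)$ (valid on the indicated range of $\xi$ because $c_\chi\leq\norm{\al}_1$), and bound the resulting quartic error sum in two complementary ways, one per entry of the $\min$. The difference is in how the two bounds are executed. For the first entry, the paper re-runs a conductor-discarding argument inside this proof (the ``factor out $\sqrt{\log q^*}$'' trick, again via Lemma~\ref{lemma small conductors}), whereas you bound the quartic sum pointwise by $2\norm{\al}_1^2\,\V[X_{q;\a,\al}]$, invoke the lower bound $\V[X_{q;\a,\al}]\gg\phi(q)\norm{\al}_2^2\log(3\phi(q)/k)$ already proved in Lemma~\ref{lemma bounds on variance general}, and finish with $\norm{\al}_1^2\leq k\norm{\al}_2^2\leq\phi(q)\norm{\al}_2^2$; this is shorter and avoids repeating the conductor argument, at the cost of the (harmless, implicitly present everywhere) assumption that the $a_i$ are distinct so that $k\leq\phi(q)$. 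For the second entry, the paper bounds $\sum_\chi c_\chi^4$ by a Cauchy--Schwarz argument on the quadruples $(i,i',j,j')$ with $a_ia_{i'}\equiv a_ja_{j'}$, producing the factor $k^2$, while your route via $\sum_\chi c_\chi^4\leq(\sup_\chi c_\chi^2)\sum_\chi c_\chi^2\leq k\norm{\al}_2^2\cdot\phi(q)\norm{\al}_2^2$ gives the slightly sharper factor $k$, which of course still implies the stated estimate. Both approaches deliver the lemma; yours is a touch more economical and, in the second regime, marginally stronger.
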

\begin{proof}
As in Lemma \ref{lemma clt}, we compute
$$ \log \hat X_{q;\a,\al}(\xi) =i \E[X_{q;\a,\al}] \xi+ \sum_{\chi\neq \chi_0} \sum_{\gamma_{\chi}>0} \log \left(J_0 \left( \frac{2|\alpha_1\chi(a_1)+...+\alpha_k\chi(a_k)| \xi}{\sqrt{\frac 14+\gamma^2}} \right) \right). $$
We now use the Taylor expansion \eqref{equation Taylor expansion of Bessel}, which is valid as soon as $|\xi|\leq \frac 3{5 \norm{\al}_1}$, since under this condition we have
$$ \frac{2|\alpha_1\chi(a_1)+...+\alpha_k\chi(a_k)| |\xi|}{\sqrt{\frac 14+\gamma^2}} \leq \frac{2\norm{\al}_1}{1/2} \frac 3{5\norm{\al}_1}=\frac{12}5.$$
Using \eqref{equation pre variance} and the analogous estimate
$\sum_{\gamma_{\chi}} (\frac 14+\gamma_{\chi}^2)^{-2} \asymp \log q^*,$
we get
\begin{equation} \log \hat Y_{q;\a,\al}(\xi) = -\frac{\xi^2}2 + O\left( \xi^4 \frac{\sum_{\chi\neq \chi_0}|\alpha_1\chi(a_1)+...+\alpha_k\chi(a_k)|^4\log q^* }{\left(\sum_{\chi\neq \chi_0}|\alpha_1\chi(a_1)+...+\alpha_k\chi(a_k)|^2\log q^*\right)^2}\right).
\label{equation characteristic function general}
\end{equation}

If $\phi(q)/k$ is bounded, then the statement trivially follows from the bound $\sum_i a_i^4 \leq (\sum_i a_i^2)^2$. Therefore we assume from now on that $\phi(q)/k\geq 576$.

We now use two different approaches to bound the error term. The first idea is to "factor out $\sqrt{\log q^*}$" before applying the trivial inequality $\sum_i a_i^4 \leq (\sum_i a_i^2)^2$. We have seen in Remark \ref{remark lemma variance general} that the main contribution to the variance is that of the characters with $q^*\geq L:=(3\phi(q)/k)^{\frac 13}$. We use the same idea here. Setting $\Theta_{\chi}:=|\alpha_1\chi(a_1)+...+\alpha_k\chi(a_k)|^2$, we have
\begin{align} \begin{split}\sum_{\chi\neq \chi_0}\Theta_{\chi}\log q^* &\geq \sum_{\substack{\chi\neq \chi_0 \\ q^* > L}}\Theta_{\chi}\log q^* \geq \sqrt{\log L} \sum_{\substack{\chi\neq \chi_0 \\ q^* > L}}\Theta_{\chi}\sqrt{\log q^*} \\
& \geq \sqrt{\log L} \left(\sum_{\substack{\chi\neq \chi_0 }}\Theta_{\chi} \sqrt{\log q^*} - k L^2\sqrt{\log L} \norm{\al}_2^2 \right)
\end{split}
\label{equation square root log L}
\end{align}
by Lemma \ref{lemma small conductors} and the Cauchy-Schwartz inequality. Now,
by our choice of $L$ and by the fact that $\phi(q)/k \geq 576$ we have
\begin{align*}
  kL^2 \sqrt{\log L} \norm{\al}_2^2 &\leq \frac 12 \sqrt{\log L} \bigg[\phi(q)\norm{\al}_2^2 -kL^2 \norm{\al}_2^2 \bigg] \\&\leq \frac 12 \sum_{\substack{\chi \neq \chi_0 \\ q^*\geq L}} \Theta_{\chi} \sqrt{\log q^*} \leq 
\frac 12 \sum_{\chi \neq \chi_0} \Theta_{\chi} \sqrt{\log q^*},
\end{align*}
hence \eqref{equation square root log L} gives that
$$\sum_{\chi\neq \chi_0}\Theta_{\chi}\log q^* \gg \sqrt{\log L}\sum_{\substack{\chi\neq \chi_0 }}\Theta_{\chi} \sqrt{\log q^*}. $$
Plugging this into \eqref{equation characteristic function general} and using the trivial bound $\sum_{\substack{\chi\neq \chi_0 }}\Theta_{\chi}^2 \log q^* \leq \left(\sum_{\substack{\chi\neq \chi_0 }}\Theta_{\chi} \sqrt{\log q^*} \right)^2$, we get that the error term is $\ll \xi^4 / \log L$.

For the second upper bound we use Lemma \ref{lemma bounds on variance general} and the Cauchy-Schwartz inequality:

\begin{align*} \frac{\sum_{\chi\neq \chi_0}|\alpha_1\chi(a_1)+...+\alpha_k\chi(a_k)|^4\log q^* }{\left(\sum_{\chi\neq \chi_0}|\alpha_1\chi(a_1)+...+\alpha_k\chi(a_k)|^2\log q^*\right)^2} &\ll \frac{\log q}{\log (3\phi(q)/k)^2} \frac{\sum_{\chi\neq \chi_0}|\alpha_1\chi(a_1)+...+\alpha_k\chi(a_k)|^4 }{(\phi(q) \norm{\al}_2^2)^2} \\
& = \frac{\log q}{\log (3\phi(q)/k)^2} \frac{\sum_{\substack{i,j,i',j' \\ a_{i}a_{i'} \equiv a_{j}a_{j'} \bmod q}} \alpha_i\alpha_{i'}\alpha_j\alpha_{j'} }{\phi(q) \norm{\al}_2^4 }\\
& \leq \frac{\log q}{\log (3\phi(q)/k)^2} \frac{\left(\sqrt{\sum_{i=1}^k \alpha_i}\sqrt{ \sum_{j=1}^k 1}\right)^4}{\phi(q) \norm{\al}_2^4 },  
\end{align*} 
which gives the claimed bound.
\end{proof}

\begin{proof}[Proof of Theorem \ref{theorem limitations generales}]
Let $K\geq 1$ and define $c>0$ to be the constant implied in the lower bound in Lemma \ref{lemma bounds on variance general}. Assume that $k\leq e^{-e^{4K}} \phi(q)$ and that \eqref{equation hypothesis thm not biased} holds with $K_2=K$. Define the vector $\be:=\frac{e^{-K}}{\norm{\al}_1} \al$, so that $\norm{\be}_1 = e^{-K}$, which will allow us to apply Lemma \ref{lemma clt general}. Clearly,
$$ \delta(q;\a,\al)=\delta(q;\a,\be),$$
since multiplying $\al$ by a positive constant does not affect the inequality $\alpha_1\pi(n;q,a_1)+...+\alpha_k\pi(n;q,a_k) >0$. 

We have by Lemma \ref{lemma bounds on variance general} and by the definition of $c$ that
\begin{align*} B(q;\a,\be):= \frac{\E[X_{q;\a,\be}]}{\sqrt{\V[X_{q;\a,\be}]}} &\leq \frac{\rho(q)\left|\sum_{i=1}^k \epsilon_i\beta_i \right|}{ \sqrt{c\phi(q) \log \left(3\phi(q)/k\right) \sum_{i=1}^k \beta_i^2 }} \\& = c^{-\frac 12} \frac{\rho(q)\left|\sum_{i=1}^k \epsilon_i\alpha_i\right|}{ \sqrt{\phi(q) \log \left(3\phi(q)/k\right) \sum_{i=1}^k \alpha_i^2 }},
\end{align*}
a quantity which is at most $\sqrt {K}$ by \eqref{equation hypothesis thm not biased}. Defining
$$Y_{q;\a,\be}:=\frac {X_{q;\a,\be}-\E[X_{q;\a,\be}]}{\sqrt{\V[X_{q;\a,\be}]}},$$
we have by Lemma \ref{lemma clt general} and by our condition on $k$ that in the range $|\xi|\leq \frac 3{5}e^{K}$,
$$ \log \hat Y_{q;\a,\be}(\xi)= -\frac{\xi^2}2+O\left( \frac{\xi^4}{e^{4K}} \right). $$
Combining this with the Berry-Esseen inequality \eqref{equation Berry-Esseen} and taking $W$ to be a standard Gaussian random variable with mean $0$ and variance $1$ we get
\begin{align} 
\begin{split}\P[Y_{q;\a,\be} > -B(q;\a,\be)] &- \P[W>-B(q;\a,\be)]  \\ &\ll \int_{-\frac 35 e^{K}}^{\frac 35 e^{K}} \frac{\hat Y_{q;\a,\be}(\xi)-e^{-\frac{\xi^2}{2}}}{\xi} d\xi + \frac 53e^{-K} \\
&\ll  \int_{-\frac 35 e^{K}}^{\frac 35e^{K}} \frac{\xi^3 e^{-\frac{\xi^2}{2}} }{ e^{4K}} d\xi + e^{-K}\\
&\ll e^{-K}.
\end{split}
\label{equation bound comparing Y to Gaussian}
\end{align}
However, since $B(q;\a,\be) \leq \sqrt K$, we have that 
$$\P[W\leq -B(q;\a,\be)] \geq c_1 \frac{e^{-\frac K2}}K $$
for some absolute constant $c_1$. Therefore, applying \eqref{equation bound comparing Y to Gaussian} gives
\begin{align*}
 \delta(q;\a,\be)&= \P[Y_{q;\a,\be} > -B(q;\a,\be)] \\&= \P[W>-B(q;\a,\be)] +O(e^{-K})
 \\&\leq  1-c_1 e^{-\frac K2}/K +c_2e^{-K},
 \end{align*}
 a quantity which is less than the right hand side of \eqref{equation conclusion of theorem not biased} for $K$ large enough. The proof is finished since $\delta(q;\a,\al)=\delta(q;\a,\be)$.
\end{proof}

To end this section we give an exact expression for the variance $V(q;\a,\al)$. While we have not explicitly made use of this expression, we include it for its intrinsic interest, and for its ability to give a precise evaluation of the variance $V(q;\a,\al)$ for values of $q$ having prescribed prime factors.  
\begin{lemma}
\label{lemma exact expression for the variance general}
We have that
\begin{equation}
V(q;\a,\al) = \phi(q) \norm{\al}_2  \left( \log q- \sum_{p\mid q} \frac{\log p}{p-1} \right) - \phi(q)\sum_{i\neq j} \alpha_i\alpha_j \frac{\Lambda\left( \frac q{(q,a_ia_j^{-1}-1)}\right)}{\phi\left( \frac q{(q,a_ia_j^{-1}-1)}\right)}.
\label{equation exact evaluation variance}
\end{equation} 
\end{lemma}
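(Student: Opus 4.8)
The plan is to begin from the spectral formula for the variance that is already implicit in the proof of Lemma~\ref{lemma bounds on variance general}: under LI there are no real zeros, the $Z_{\gamma_\chi}$ are independent with $\V[\Re Z_{\gamma_\chi}]=\tfrac12$, and the nontrivial zeros come in conjugate pairs when $\chi$ is real, so that
$$ V(q;\a,\al)=\sum_{\chi\neq\chi_0}\Big|\sum_{i=1}^k\alpha_i\chi(a_i)\Big|^{2}\,c(\chi),\qquad c(\chi):=\sum_{\gamma_\chi}\frac1{\tfrac14+\gamma_\chi^{2}}, $$
the inner sum over zeros depending only on the primitive character $\chi^{*}$ inducing $\chi$. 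Expanding $\big|\sum_i\alpha_i\chi(a_i)\big|^{2}=\sum_{i,j}\alpha_i\alpha_j\chi(a_ia_j^{-1})$ and using $\sum_i\alpha_i=0$ to reinsert the (then harmless) principal character, this becomes $V(q;\a,\al)=\sum_{i,j}\alpha_i\alpha_j\,T(a_ia_j^{-1})$ with $T(b):=\sum_{\chi\bmod q}\chi(b)\,c(\chi)$, so everything reduces to a clean evaluation of $T(b)$ for $(b,q)=1$ (we may assume the $a_i$ distinct, since $X_{q;\a,\al}$ depends only on the linear forms $\sum_i\alpha_i\chi(a_i)$).

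I would then substitute the Hadamard-type closed form \eqref{equation sum over zeros B(chi)}, namely $c(\chi)=\log q^{*}-\log\pi-\gamma-(1+\chi(-1))\log2+2\Re\tfrac{L'}{L}(1,\chi^{*})$, and treat the resulting pieces separately. For the $\log q^{*}$ piece, write $\log q^{*}=\sum_{d\mid q}\log d\cdot\mathbf 1_{q^{*}=d}$ and use the M\"obius-inverted orthogonality relation $\sum_{\chi\ \mathrm{prim}\bmod d}\chi(b)=\sum_{e\mid (d,b-1)}\mu(d/e)\phi(e)$; interchanging the $d$- and $e$-summations and invoking the elementary identities $\sum_{f\mid m}\mu(f)=\mathbf 1_{m=1}$ and $\sum_{f\mid m}\mu(f)\log f=-\Lambda(m)$ collapses $\sum_{d\mid q}\log d\sum_{e\mid (d,b-1)}\mu(d/e)\phi(e)$ to
$$ \phi(q)\Big(\log q-\sum_{p\mid q}\frac{\log p}{p-1}\Big)\mathbf 1_{b\equiv1}\;-\;\phi(q)\,\frac{\Lambda\!\big(q/(q,b-1)\big)}{\phi\!\big(q/(q,b-1)\big)}\mathbf 1_{b\not\equiv1}. $$
Weighted by $\sum_{i,j}\alpha_i\alpha_j\chi(a_ia_j^{-1})$ --- the diagonal $i=j$ (i.e.\ $b\equiv1$) producing the factor $\sum_i\alpha_i^{2}$, the off-diagonal ($i\neq j$, hence $b\not\equiv1$) producing exactly the second sum --- this already reproduces the right-hand side of \eqref{equation exact evaluation variance}.

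It therefore remains to verify that the surviving pieces contribute nothing, i.e.\ that $\sum_{\chi\neq\chi_0}\big|\sum_i\alpha_i\chi(a_i)\big|^{2}\big(-\log\pi-\gamma-(1+\chi(-1))\log2+2\Re\tfrac{L'}{L}(1,\chi^{*})\big)=0$. For the $\tfrac{L'}{L}(1,\chi^{*})$ part I would open $\tfrac{L'}{L}(1,\chi^{*})=-\sum_{n\ge1}\Lambda(n)\chi^{*}(n)/n$, group characters by conductor, apply orthogonality a second time (now with the extra constraint $(n,d)=1$), and so reduce the $n$-sum to prime powers and, after the $d$-summation, to residue-class--restricted partial sums $\sum_{n\le x,\,n\equiv c\,(q)}\Lambda(n)/n=\tfrac{\log x}{\phi(q)}+\beta(q,c)+o(1)$; since $\sum_{i,j}\alpha_i\alpha_j=0$, the divergent $\tfrac{\log x}{\phi(q)}$ term and the $c$-independent part of $\beta(q,c)$ drop out, and a classical evaluation of the class-dependent part of $\beta(q,c)$ --- together with the archimedean/$p=2$ constants $-\log\pi-\gamma-(1+\chi(-1))\log2$ of \eqref{equation sum over zeros B(chi)} --- produces the required cancellation. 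I expect this final step to be the main obstacle: it forces one to handle the only conditionally convergent series $\tfrac{L'}{L}(1,\chi^{*})$ with care, which is cleanest if one keeps $\log q^{*}+2\Re\tfrac{L'}{L}(1,\chi^{*})$ together (equivalently, applies the Guinand--Weil explicit formula directly to $\sum_{\chi\neq\chi_0}\big|\sum_i\alpha_i\chi(a_i)\big|^{2}\sum_{\gamma_\chi}(\tfrac14+\gamma_\chi^{2})^{-1}$), so that the archimedean terms and the divergences are organised consistently. Everything else is routine M\"obius/von Mangoldt bookkeeping.
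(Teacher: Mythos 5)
Your computation of the $\log q^{*}$ piece is correct and is exactly the computation behind the paper's one-line proof: expanding $\bigl|\sum_i\alpha_i\chi(a_i)\bigr|^{2}=\sum_{i,j}\alpha_i\alpha_j\chi(a_ia_j^{-1})$ and evaluating $\sum_{\chi\bmod q}\chi(b)\log q^{*}$ via M\"obius-inverted orthogonality together with $\sum_{f\mid m}\mu(f)=\mathbf 1_{m=1}$ and $\sum_{f\mid m}\mu(f)\log f=-\Lambda(m)$ reproduces the right-hand side of \eqref{equation exact evaluation variance}; this is the content of the Proposition~3.3 of \cite{FiMa} that the paper cites (note also the typo in the lemma statement: $\norm{\al}_2$ should be $\norm{\al}_2^2$).

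However, the step you flag as ``the main obstacle'' is not merely an obstacle: it is a genuine gap, and in fact the claimed cancellation is false, so the lemma as stated is not exact. By the paper's own identity \eqref{equation sum over zeros B(chi)}, $\sum_{\gamma_\chi}(\tfrac14+\gamma_\chi^{2})^{-1}=\log q^{*}-\log\pi-\gamma-(1+\chi(-1))\log 2+2\Re\tfrac{L'}{L}(1,\chi^{*})$, so passing from the true variance $\sum_{\chi\neq\chi_0}\bigl|\sum_i\alpha_i\chi(a_i)\bigr|^{2}\sum_{\gamma_\chi}(\tfrac14+\gamma_\chi^{2})^{-1}$ to $\sum_{\chi}\bigl|\sum_i\alpha_i\chi(a_i)\bigr|^{2}\log q^{*}$ requires the weighted sum of the remaining terms to vanish identically, and it does not. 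Already the constant $-\log\pi-\gamma$ contributes $-\phi(q)\norm{\al}_2^{2}(\log\pi+\gamma)$ after the character sum, and there is no identity forcing the $2\Re\tfrac{L'}{L}(1,\chi^{*})$ piece to cancel it. Concretely, at $q=3$, $k=2$, $a_1=1$, $a_2=2$, $\al=(-1,1)$, the only non-principal character is the Legendre symbol $\chi_3$, whose lowest zero is near $\gamma\approx8.04$, so $V(3;\a,\al)=4\sum_{\gamma_{\chi_3}}(\tfrac14+\gamma^{2})^{-1}$ is roughly $0.4$, whereas \eqref{equation exact evaluation variance} (with the $\norm{\al}_2^{2}$ correction) gives $4\log 3\approx4.39$ --- off by an order of magnitude. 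Thus the route you sketch of expanding $\tfrac{L'}{L}(1,\chi^{*})$ as a conditionally convergent Dirichlet series and matching against $\beta(q,c)$ cannot produce the required vanishing, because it is not true.

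You are in good company: the paper's proof opens with the unjustified assertion $V(q;\a,\al)=\sum_{\chi\bmod q}\bigl|\sum_i\alpha_i\chi(a_i)\bigr|^{2}\log q^{*}$, which contradicts the paper's own formula \eqref{equation sum over zeros B(chi)} (and the paper itself keeps the $O(\log\log q^{*})$ correction when it uses that formula in the proof of Lemma \ref{lemma variance of X_q}). You have essentially located an error in the lemma; it is harmless downstream, since the paper states explicitly that this expression is not used elsewhere, but a genuinely exact formula would have to retain the additional term $\sum_{\chi\neq\chi_0}\bigl|\sum_i\alpha_i\chi(a_i)\bigr|^{2}\bigl(-\log\pi-\gamma-(1+\chi(-1))\log 2+2\Re\tfrac{L'}{L}(1,\chi^{*})\bigr)$ that you isolated.
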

\begin{proof}
Using Proposition 3.3 of \cite{FiMa}, we get
\begin{align*}
V(q;\a,\al) &= \sum_{\chi \bmod q} |\alpha_1\chi(a_1)+...+\alpha_k\chi(a_k)|^2 \log q^* \\
&= \sum_{1\leq i,j \leq k} \alpha_i\alpha_j \sum_{\chi \bmod q} \chi(a_ia_j^{-1}) \log q^* \\
&=  \phi(q)\left( \log q- \sum_{p\mid q} \frac{\log p}{p-1} \right) \sum_{i=1}^k \alpha_i^2 -\phi(q) \sum_{i\neq j} \alpha_i\alpha_j \frac{\Lambda\left( \frac q{(q,a_ia_j^{-1}-1)}\right)}{\phi\left( \frac q{(q,a_ia_j^{-1}-1)}\right)}. 
\end{align*}

\end{proof}
It might seem like the second term of \eqref{equation exact evaluation variance} is an error term, however this is not necessarily true for large values of $k$ (see Lemma \ref{lemma variance of X_q}). Nevertheless, we expect many cancellations to occur since
$$ \sum_{i\neq j} \alpha_i\alpha_j = \left(\sum_{i=1}^k \alpha_i \right)^2-\sum_{i=1}^k \alpha_i^2 =-\sum_{i=1}^k \alpha_i^2.  $$

\end{document}